\newcounter{Definitioncount}
\newtheorem*{Theorem}{Theorem}
\newtheorem*{Proposition}{Proposition}
\newtheorem*{Corollary}{Corollary}
\newtheorem*{Conjecture}{Conjecture}
\newtheorem*{Lemma}{Lemma}
\theoremstyle{definition}
\newtheorem*{Definition}{Definition}
\newtheorem*{Observation}{Observation}
\newtheorem*{Observations}{Observations}
\newtheorem*{Remark}{Remark}
\newtheorem*{Example}{Example}
\newtheorem{example}{Example}[subsection]
\newtheoremstyle{fact}{\bigskipamount}{\medskipamount}{\upshape}{}{\itshape}{. }{ }{Fact}
\theoremstyle{fact}
\newtheorem*{Fact}{Fact}
\newtheoremstyle{genquest}{\bigskipamount}{\medskipamount}{\upshape}{}{\itshape}{. }{ }{General Question}
\theoremstyle{genquest}
\newtheorem*{GenQuest}{General Question}
\newtheoremstyle{step}{2\bigskipamount}{\medskipamount}{\upshape}{}{\itshape}{. }{ }{\underline{Step~\thestep}}
\theoremstyle{step}
\newtheorem{step}{Step}[subsection]
\renewcommand{\thestep}{\arabic{step}}
\newcommand{\catI}{\mathbf{1}}
\newcommand{\catA}{\mathcal{A}}
\newcommand{\Ab}{\operatorname{Ab}}
\newcommand{\Aut}{\operatorname{Aut}}
\newcommand{\catB}{\mathcal{B}}
\newcommand{\classB}{\operatorname{B}}
\newcommand{\BB}{\mathbb{B}}
\newcommand{\BC}{\classB C}
\newcommand{\BStMon}{\operatorname{BStMon}}
\newcommand{\Cat}{\operatorname{Cat}}
\newcommand{\catC}{\mathcal{C}}
\newcommand{\CC}{\mathbb{C}}
\newcommand{\funcC}{\mathrm{C}}
\newcommand{\CP}{\operatorname{CP}}
\newcommand{\catD}{\mathcal{D}}
\newcommand{\Drin}{\mathrm{D}}
\newcommand{\LieD}{\operatorname{D}\nolimits}
\newcommand{\Ds}{\operatorname{Ds}}
\newcommand{\catE}{\mathcal{E}}
\newcommand{\univE}{\operatorname{E}}
\newcommand{\catF}{\mathcal{F}}
\newcommand{\FF}{\mathbb{F}}
\newcommand{\Ext}{\operatorname{Ext}}
\newcommand{\FCat}{\operatorname{\mathcal{F}\mbox{-}Cat}}
\newcommand{\catG}{\mathcal{G}}
\newcommand{\GL}{\operatorname{GL}}
\newcommand{\catGL}{\mathop{\mathcal{G}\ell}\nolimits}
\newcommand{\GSet}{\operatorname{G-set}}
\newcommand{\catH}{\mathcal{H}}
\newcommand{\Hall}{\operatorname{Hall}}
\newcommand{\coHom}{\operatorname{H}}
\newcommand{\scrI}{\mathcal{I}}
\newcommand{\Ind}{\operatorname{Ind}}
\newcommand{\ringK}{\mathrm{K}}
\newcommand{\Lan}{\operatorname{Lan}}
\newcommand{\MM}{\mathbb{M}}
\newcommand{\funcM}{\mathcal{M}}
\newcommand{\Mky}{\operatorname{Mky}}
\newcommand{\fMky}{\operatorname{\mathit{f}-Mky}}
\newcommand{\Mod}{\operatorname{Mod}}
\newcommand{\Mon}{\operatorname{Mon}}
\newcommand{\normN}{\operatorname{N}}
\newcommand{\NN}{\mathbb{N}}
\newcommand{\OO}{\operatorname{O}\nolimits}
\newcommand{\Rep}{\operatorname{Rep}}
\newcommand{\Res}{\operatorname{Res}}
\newcommand{\RT}{\operatorname{RT}}
\newcommand{\LieS}{\operatorname{S}\nolimits}
\newcommand{\catSS}{\mathbb{S}}
\newcommand{\Set}{\operatorname{Set}}
\newcommand{\SO}{\operatorname{SO}}
\newcommand{\Spn}{\operatorname{Spn}}
\newcommand{\StMon}{\operatorname{StMon}}
\newcommand{\TT}{\mathbb{T}}
\newcommand{\Tamb}{\operatorname{Tamb}}
\newcommand{\TambLS}{\Tamb_{\ell\mathrm{s}}}
\newcommand{\TambS}{\Tamb_{\mathrm{s}}}
\newcommand{\Tr}{\operatorname{Tr}}
\newcommand{\Tv}{\operatorname{Tv}}
\newcommand{\TV}{\operatorname{TV}}
\newcommand{\catV}{\mathcal{V}}
\newcommand{\VCat}{\operatorname{\mathcal{V}\mbox{-}Cat}}
\newcommand{\Vect}{\operatorname{Vect}}
\newcommand{\Weyl}{\operatorname{W}}
\newcommand{\XX}{\mathbb{X}}
\newcommand{\catX}{\mathcal{X}}
\newcommand{\YY}{\mathbb{Y}}
\newcommand{\catY}{\mathcal{Y}}
\newcommand{\YB}{\operatorname{YB}}
\newcommand{\ZZ}{\mathbb{Z}}
\newcommand{\catZ}{\mathcal{Z}}
\newcommand{\cntrZ}{\mathcal{Z}}
\newcommand{\add}{\operatorname{add}}
\newcommand{\elless}{\ell\mathrm{s}}
\newcommand{\ess}{\mathrm{s}}
\newcommand{\ev}{\operatorname{ev}}
\newcommand{\ffam}{\operatorname{fam}}
\newcommand{\hocolim}{\operatorname{hocolim}}
\newcommand{\inv}{\mathrm{inv}}
\newcommand{\Implies}{\Longrightarrow}
\newcommand{\mmod}{\operatorname{mod}}
\newcommand{\ob}{\operatorname{ob}}
\newcommand{\op}{\operatorname{op}}
\newcommand{\pr}{\operatorname{pr}^{}}
\newcommand{\rev}{\operatorname{rev}}
\newcommand{\val}{\operatorname{v}}
\newcommand{\zz}{\operatorname{z}}
\newcommand{\longto}{\longrightarrow}
\newcommand{\longfrom}{\longleftarrow}
\newcommand{\ldual}[1]{\mathord{{\let\nolimits\relax\sideset{^\wedge}{}{#1}}}}
\newcommand{\laction}[2]{\mathord{{\let\nolimits\relax\sideset{^{#1}}{}{#2}}}}
\newcommand{\conj}[2]{\mathord{{\let\nolimits\relax\sideset{^{#1}}{}{#2}}}}
\newcommand{\xylongto}[2][2pt]{{\UseTips{}\xymatrix @C+#1{ \ar[r]^{#2}&}}}
\begin{document}
\author{Ross Street\footnote{The author gratefully acknowledges the support of an Australian Research Council Discovery Grant DP1094883 and of the \'Ecole de Math\'ematique, Universit\'e catholique de Louvain.}}
 
\title{Monoidal categories in, and linking, geometry and algebra}
\date{}
\maketitle

\noindent {\small{\emph{2010 Mathematics Subject Classification} \quad 18D10; 18D20; 18D35; 20C08; 20C30; 57M25; 81T45; 20C33}}
\\
{\small{\emph{Key words and phrases.} monoidal category; enriched category; braiding; string diagram; duoidal category; Mackey functor; Green functor; manifold invariant; topological quantum field theory; Day convolution; Joyal species; finite general linear group; link invariant; cuspidal representation.}}

\begin{abstract}
\noindent This is a report on aspects of the theory and use of monoidal categories.
The first section introduces the main concepts through the example of the category of vector spaces.
String notation is explained and shown to lead naturally to a link between knot theory and monoidal categories.
The second section reviews the light thrown on aspects of representation theory by the machinery of monoidal category theory, machinery such as braidings and convolution.
The category theory of Mackey functors is reviewed in the third section. Some recent material and a conjecture concerning monoidal centres is included. 
The fourth and final section looks at ways in which monoidal categories are, and might be, used for new invariants of low-dimensional manifolds and for the field theory of theoretical physics.

\end{abstract}

\section*{Introduction}

In essence, this paper consists of the notes of four lectures delivered in May 2011 as part of the Chaire de la Vall\'ee Poussin 2011\footnote{\url{http://www.uclouvain.be/15551.html}}. 
The third and fourth lectures were also part of the conference \emph{Category Theory, Algebra and Geometry} on 26 and 27 May 2011 in Louvain-la-Neuve, Belgium.    

The role of monoidal categories in mathematics can be expressed simplistically in an equation:
 
\[
\frac{\mathrm{Point \ \ in \ \ Euclidean \ \ space}}{\mathrm{Vector \ \ space}}=\frac{\mathrm{Vector \ \ space}}{\mathrm{Monoidal \ \ category}}\,.
\]

While the more precise subject of the paper can be gleaned from the table of contents, I would like to mention that the preparation of this material took on a life of its own. What forced itself on me was a strong feeling for the importance of the concept of duoidal category (called 2-monoidal category in~\cite{AguiarMahajan}) and for the interesting questions associated with the construction of the centre of a monoidal category. I hope the paper manages to capture and impart some of my enthusiasm.     

It is a pleasure to acknowledge the significant help from Cathy Brichard at Universit\'e catholique de Louvain who turned my handwritten lecture notes into \LaTeX . Congratulations to Tom Booker who in November 2011 facilitated my looking at and writing in \LaTeX\  code. 
Even so, I have much to learn, especially when it comes to diagrams: so my thanks go to Mark Weber and Craig Pastro for being willing helpers with that. 
In the final stages of preparation of this publication, I am grateful that Ross Moore came to the rescue for many vast improvements in the diagrams, a few now coming from our book~\cite{RossQG} for which he was the Technical Editor. 
Along with the standard references~\cite{CWM} and~\cite{Borceux}, 
our book provides a good supplement to the present Section~\ref{Lecture1}. 
Furthermore Section \ref{Lecture2} is dedicated to Brian Day and some of the material of Section~\ref{Lecture4} is joint with him.  

\tableofcontents

\clearpage
\section{From linear algebra to knot theory via categories\label{Lecture1}}

\subsection{Introduction to categories}

Formalizing properties of the Euclidean plane into the structure of vector space, 
we are able to transfer low-dimensional thinking to obtain precise results in higher dimensions.

\[
\xygraph{ !{\UseTips}
 !{/r6pc/: /u4pc/::}
 []*++[F-]+!C\txt{Vector spaces} !{+D*+!C\txt{ \ }} 
 ( [dl] *++[F-]+\txt{Plane\\geometry} : ?
 , ? : [dr] *++[F-]+\txt{Functional\\analysis}
 )
}\]

\noindent Formalizing properties of the totality of vector spaces into the structure of monoidal category, we are able to transfer linear algebra into some perhaps surprising areas.

\[
\xygraph{  !{\UseTips}!{/u4pc/::}
 !{/r6pc/: /u4pc/::}
 []*++[F-]+!C\txt{Monoidal categories} !{+D*+!C\txt{ \ }} 
 ( [dl] *++[F-]+\txt{Linear\\algebra} : ?
 , ? : [dr] *++[F-]+\txt{Knot\\theory}
 )
}\]

Klein recognized the importance of structure-respecting transformations in geometry.
These transformations were taken to be invertible, forming a groupoid.
General structure-respecting functions are also crucial.
The notion of ``category'' captures that idea.

For example, the \emph{category} $\Vect $ of (say, complex) vector spaces consists not only of the vector spaces $V$ (\emph{objects}) but also the linear functions $f:V\to W$ between them (\emph{morphisms}).
The basic operation is composition
 $$
 \xymatrix{ & W \ar[dr]^g &
                \\ V \ar[ur]^f  \ar[rr]_{g\circ f} & & X
                }
 $$ 
of the morphisms.
Each object $V$ has an identity morphism $1_V : V \longto V$.

A first course on linear algebra introduces the operation of \emph{direct sum} $V \oplus W$ of vector spaces and the quantity \emph{dimension}. If $V$ and $W$ are finite dimensional,
\[
\dim (V \oplus W) = \dim V + \dim W.
\]
\emph{Bilinear functions} $h : V \times W \longto X$ would also be defined.
Yet the operation of \emph{tensor product}   $V \otimes W$ might await a course on multilinear algebra or module theory.
For $V$ and $W$ finite dimensional, 
\[
\dim (V \otimes W) = \dim V \times \dim W.
\]
Bilinear functions $h : V \times W \longto X$ are in canonical bijection with linear functions $f : V \otimes W \longto X$.
There is also the vector space $X^V$ of linear functions from $V$ to $X$.
Linear functions $f : V \otimes W  \longto X$ are in canonical bijection with linear functions
\[
g : W  \longto X^V.
\]

A \emph{category}  $\catC$ consists of a set of \emph{objects} and, for each pair of objects $A, B$, a set $\catC (A,B)$ of morphisms $f : A  \longto B$, together with an associative composition rule $\circ$ with identities $1_A : A  \longto A$.
\[  \xymatrix@R+2mm @C+2mm{
    A \ar[r]^{h \circ (g\circ f)} \ar[d]_f \ar[rd]^{g \circ f} & D   \\
    B \ar[r]_g    & C \ar[u]_h
  } \qquad
  \xymatrix@R+2mm @C+2mm{
    A \ar[r]^{(h \circ g)\circ f} \ar[d]_f  & D   \\
    B \ar[r]_g  \ar[ru]_{h \circ g}  & C \ar[u]_h}
 \qquad
   \xymatrix@R+2mm @C+2mm{
    A \ar[r]^f\ar[d]_{f=1_B \circ f}   & B \ar[ld]_{1_B}  \ar[d]^{g=g\circ 1_B}\\
    B \ar[r]_g     & C }
\]

In a category, we can speak of \emph{commutative diagrams}:
\begin{equation*}
\xymatrix{
& Y \ar[rd]^-{ g}  & \\
X \ar[ru]^-{f} \ar[d]_-{u} \ar[rr]_-{w} & & Z \ar[d]^-{h} \\
V \ar[rr]_-{v} & & W }
\end{equation*}
\begin{center}
$h \circ w = v \circ u$ \quad and \quad $g \circ f = w$, \quad
so $v \circ u = h \circ g \circ f$.
\end{center}

Categories are themselves mathematical structures: so we should look at morphisms between them.
A \emph{functor} $T : \catC  \longto \catH$ assigns an object $TA$ of $\catH$ to each object $A$ of $\catC$, 
a morphism $Tf : TA  \longto TB$ in $\catH$ to each $f : A  \longto B$ in $\catC$, such that
\[
T1_A = 1_{TA} \quad \mbox{and} \quad T(g \circ f) = Tg \circ Tf.
\]
For example, each vector space $V$ determines a functor
\[
T = V\otimes - : \Vect  \longto \Vect 
\]
defined by
\begin{align*}
TA &= V \otimes A  \\
Tf &= 1_V \otimes f : V \otimes A \longto V \otimes B\,,   \qquad  v \otimes a \longmapsto v \otimes f(a)\,. 
\end{align*}
In an obvious sense of \emph{product category}, tensor product is a functor
\[
- \otimes - : \Vect  \times \Vect  \longto \Vect\,.
\]

Categories exhibit a higher-dimensional structure in that there are morphisms between functors.
Suppose $S$ and $T : \catC \longto \catH$ are functors. A \emph{natural transformation}
\[
\theta : S \Longrightarrow T
\]
is a family of morphisms
\[
\theta_A : SA \longto TA
\]
in $\catH$, indexed by the objects $A$ of $\catC$, such that the square
\[
\xymatrix{
   S A \ar[r]^{\theta_A} \ar[d]_{Sf}   & TA  \ar[d]^{Tf}\\
    SB \ar[r]_{\theta_B}    & TB}
\]
commutes for all $f : A \longto B$ in $\catC$.

\medskip

For example, each linear function $t : V \longto W$ determines a natural transformation 
$t \otimes - : V \otimes - \longto W \otimes -$ with components
\begin{align*}
  t \otimes 1_A : &V \otimes A \longto W \otimes A \\
 & v \otimes a \longmapsto t(v) \otimes a\,. 
\end{align*}

\subsection{Introduction to monoidal categories}

We can now define a notion developed by Mac Lane~\cite{ML1963}, 
B\'enabou~\cite{Ben1963} and Eilenberg--Kelly~\cite{EilKel1966}. 
A \emph{monoidal category} consists of a category $\catC$ equipped with a functor
\[
- \otimes - : \catC \times \catC \longto \catC,
\]
an object $I$, and invertible natural families of morphisms
\begin{align*}
 & \alpha^{}_{A,B,C} : (A \otimes B) \otimes C \longto A \otimes (B \otimes C)\\
 & \lambda_A : I \otimes A \longto A\,, \qquad \rho_A : A \otimes I \longto A
\end{align*}
such that the following commute.

\[
\xymatrix{
& (A \otimes (B \otimes C)) \otimes D \ar[rd]^-{\phantom{A}\alpha^{}_{A,B\otimes C,D}}  & \\
((A \otimes B )\otimes C) \otimes D \ar[ru]^-{\alpha^{}_{A,B,C}\otimes 1_{D}\phantom{A}} \ar[d]_-{\alpha^{}_{A\otimes B,C,D}} & & A \otimes ((B \otimes C) \otimes D ) \ar[d]^-{1_{A}\otimes\alpha^{}_{B,C,D}} \\
(A \otimes B) \otimes (C \otimes D) \ar[rr]_-{\alpha^{}_{A,B,C\otimes D }} & & A \otimes (B \otimes (C \otimes D)) 
}\] 
\[
\xymatrix{(A \otimes I)\otimes B \ar[rr]^{\alpha^{}_{A,I,B}} \ar[rd]_{\rho_A \otimes 1_B} 
& & A \otimes (I \otimes B) \ar[ld]^{1_A \otimes \lambda_B} \\
& A \otimes B}
\]

\begin{Example} 
Take $\catC = \Vect $, usual tensor product of vector spaces as $\otimes$, and $I = \CC$.
\end{Example}

\subsection{String diagrams}

Penrose introduced \emph{string notation} for multilinear algebra.
This was adapted to monoidal categories by Joyal--Street~\cite{GTC}.
In any category $\catC$, we can write $A \stackrel{f}{\longto} B$ or as the following. 

 \[ 
 \xygraph{{f} *\xycircle<8pt>{-}="m" "m"(-[u] [l(.3)d(.25)] {A},-[d] [l(.3)u(.25)] {B})}
\] 
Both notations are ``1-dimensional''.

In a monoidal category $\catC$, we use two dimensions: composition is up-down while tensoring is left-right.
Instead of $A \otimes C \stackrel{f}{\longto} B \otimes C \otimes A$, we can depict as follows.   
\[ 
\xygraph{{f} *\xycircle<8pt>{-}="m" "m"(-[l(.5)u] [l(.1)d(.4)] {A},-[r(.5)u] [r(.1)d(.4)] {C},-[l(.75)d] [l(.1)u(.4)] {B},-[d] [l(.15)u(.3)] {C},-[r(.75)d] [r(.1)u(.4)] {A})} 
\]
A morphism $I \stackrel{f}{\longto}  A \otimes B$ is denoted as follows.
\[ 
\xygraph{!{0;(.5,0):(0,2)::}{f}*\xycircle<8pt>{-}="m" "m"(-[dl] [l(.15)u(.5)] {A},-[dr] [r(.15)u(.5)] {B})} 
\]
In this way, the monoidal structural items $\otimes, I, \alpha, \rho, \lambda$ do not appear.

\[ 
\xygraph{ !{/r2.5pc/:}
{a} *\xycircle<6pt>{-}="a" [u(2)r(1.5)] {c} *\xycircle<6pt>{-}="c" [d(1)r(1.5)] {b} *\xycircle<6pt>{-}="b" [r(1.5)u(1.2)] {d} *\xycircle<6pt>{-}="d" "a" (-[u(3.2)]^-*{B},-[d]_-*{A},-"c"_-*{B}(-[u(1.2)]^-*{C},-"b"^-*{C}(-[d(2)]_-*{B},-"d"_-*{D}(-[u]_-*{D},-[d(3.2)]^-*{C}))))} 
\]
A diagram $\Gamma$ as above, labelled in $\catC$, is called a \emph{progressive plane string diagram}; 
see Joyal--Street~\cite{GTC}.  
It means that we have morphisms
\[
B \otimes B \stackrel{a}{\longto} A\,, \quad
C \otimes D \stackrel{b}{\longto} B\,,\quad
C   \stackrel{c}{\longto} B \otimes C\,,\quad
D \stackrel{d}{\longto}  D \otimes C\,.  
\]
The \emph{value} of the diagram is the composite
\begin{align*}
 \val(\Gamma) =& (B \otimes C \otimes D \xylongto[16pt]{1_B\otimes c \otimes d} 
 B \otimes B \otimes C \otimes D \otimes C
\\
 & \xylongto[20pt]{1\otimes 1\otimes b \otimes 1} 
 B \otimes B \otimes B \otimes C   \xylongto[10pt]{a \otimes 1\otimes  1} 
 A \otimes B  \otimes C)\,.
\end{align*}
It is obtained by composing the more elementary values of horizontal layers. 
This value is independent of \emph{deformation} in a natural sense. 
For example, the diagram  

\[ 
\xygraph{{a} *\xycircle<6pt>{-}="a" [r(1.5)u] {c} *\xycircle<6pt>{-}="c" [r(1.5)d(2.5)] {b} *\xycircle<6pt>{-}="b" [r(1.5)u] {d} *\xycircle<6pt>{-}="d" "a" (-[u(2)]^-*{B},-[d(2.5)]_-*{A},-"c"_-*{B}(-[u]^-*{C},-"b"^-*{C}(-[d]_-*{B},-"d"_-*{D}(-[u(2.5)]_-*{D},-[d(2)]^-*{C}))))} 
\]
is a deformation of $\Gamma$ and its value is
\begin{align*}
\val(\Gamma) =& (B \otimes C \otimes D \xylongto[14pt]{1_B\otimes c \otimes 1} 
 B \otimes B \otimes C \otimes D \xylongto[10pt]{a \otimes 1 \otimes 1} 
 A \otimes C \otimes D
\\
 & \xylongto[10pt]{1\otimes 1\otimes d} 
  A \otimes C \otimes D \otimes C   \xylongto[10pt]{1 \otimes b \otimes  1} 
  A \otimes B  \otimes C)\,.
\end{align*}

\goodbreak\noindent
The notation handles units well:
if $I   \stackrel{a}{\longto}   A \otimes B$ and $C \stackrel{b}{\longto}  I$, 
then the following three string diagrams all have the same value.
\[ 
\xygraph{!{0;(1.25,0):(0,.65)::} {\xybox{\xygraph{{\xybox{\xygraph{!{0;(.5,0):(0,2)::} {a} *\xycircle<6pt>{-}="m" "m"(-[dl] [l(.05)u(.5)] {A},-[dr] [r(.05)u(.5)] {B})}}} [u(.5)r(1.5)] {\xybox{\xygraph{{b} *\xycircle<6pt>{-}="m" "m" -[u] [r(.15)d(.25)] {C}}}}}}}
[dr] {,} [ur]
{\xybox{\xygraph{{\xybox{\xygraph{!{0;(.5,0):(0,1.24)::} {a} *\xycircle<6pt>{-}="m" "m"(-[dl] [l(.05)u(.5)] {A},-[dr] [r(.05)u(.5)] {B})}}} [u(.61)r(.41)] {\xybox{\xygraph{!{0;(1,0):(0,.62)::} {b} *\xycircle<6pt>{-}="m" "m" -[u] [r(.15)d(.25)] {C}}}}}}}
[dr] {,} [ur]
{\xybox{\xygraph{{\xybox{\xygraph{!{0;(.5,0):(0,2)::} {a} *\xycircle<6pt>{-}="m" "m"(-[dl] [l(.05)u(.5)] {A},-[dr] [r(.05)u(.5)] {B})}}} [u(.5)l(.5)] {\xybox{\xygraph{{b} *\xycircle<6pt>{-}="m" "m" -[u] [r(.15)d(.25)] {C}}}}}}}} 
\]

\subsection{Duals}

We can look at some concepts which make sense in any monoidal category.

\medskip\noindent
A \emph{duality} between objects $A$ and $B$ is a pair of morphisms
\[
e : A \otimes B \longto I  \quad \mbox{and} \quad d : I \longto B \otimes A
\]
called the \textit{counit} and the \textit{unit}, respectively, satisfying

\[
 \xymatrix @R+3mm {
 A \otimes I \ar[r]^-{1\otimes d} \ar[rd]_{\rho_A}^\cong 
 & A \otimes (B\otimes A) {\alpha^{-1} \atop {\mbox{\large $\cong$}}}  (A\otimes B)\otimes A \ar[r]^-{e\otimes 1}
 & I \otimes A \ar[ld]^{\lambda_B}_\cong \\
 & B }
\]
\[
 \xymatrix@R+3mm {
 I \otimes B \ar[r]^-{d\otimes 1} \ar[rd]_{\lambda_B}^\cong 
 & (B\otimes A) \otimes B {\alpha \atop {\mbox{\large $\cong$}}}  B\otimes (A\otimes B)\ar[r]^-{1\otimes e}
 & B \otimes I \ar[ld]^{\rho_B}_\cong \\
 & B }
\]
In string notation, there are equalities as follows.
\[ 
\xygraph{!{0;(2.6,0):(0,.5)::} {\xybox{\xygraph{{e} *\xycircle{-}="e" [r(1.5)u] {d} *\xycircle{-}="a" "e"(-[u(2)l(.5)]^-*{A},-"a"_-*{B}-[d(2)]^(.75)*{A} [r(1.5)] -[u(3)]_(.16)*{A}) "a" [r(.75)d(.5)] {=}}}}  
 [r(1.2)] *\txt{and} [r(1.2)]
{\xybox{\xygraph{{d} *\xycircle{-}="a" [r(1.5)d] {e} *\xycircle{-}="e" "a"(-[d(2)l(.25)]^(.75)*{B},-"e"_-*{A}-[u(2)r(.5)]^-*{B} [r(1.5)] -[d(3)]^(.85)*{B}) "e" [r(1.125)u(.5)] {=}}}}} 
\]
Write $A \dashv B$  when such $e$ and $d$ exist: $A$ is a \emph{left dual} for $B$ while $B$ is a \emph{right dual} for $A$. 
Any two right duals are isomorphic. Write $A^\ast$ for a chosen right dual for~$A$. 
Call $\catC$ right \emph{autonomous} (``compact'', ``rigid'') when each object has a right dual. 
Choice gives a functor
\[
( \ )^\ast : \catC^{\op}   \longto \catC
\]
where ``$\op$'' means $( \ )^\ast$ is ``contravariant'': 
$A \stackrel{f}{\longto} C$ goes to  $A^\ast \stackrel{\ f^\ast}{\longleftarrow} C^\ast $.

\[
\xymatrix{  & C\otimes C^\ast \ar[rd]^{e} \\
*+!L(.5){A \otimes C^\ast}\ar[ru]^{f\otimes 1} \ar[rd]_{1\otimes f^\ast} & & I \\
 & A\otimes A^\ast   \ar[ru]_{e} }  
\qquad\qquad
\xymatrix{  & A^\ast \otimes A \ar[rd]^-{1\otimes f} \\
I \ar[ru]^-{d} \ar[rd]_-{d} & & *+!R(.5){A^\ast \otimes C} \\
 & C^\ast \otimes C   \ar[ru]_-{f^\ast \otimes 1} }
 \]

\medskip\noindent
We suppress $e$ and $d$ from notation by writing them as
\[ \kern-1cm
\xygraph{!{0;(-2.5,0):(0,1)::} [dd]*{}="l" [r] *{}="r" 
 "l"-@`{"l"+(.2,1),"r"+(-.2,1)}"r"|*++!U{e} 
"l" [l(.125)u(.35)] {A^*} "r" [r(.125)u(.35)] {A} 
}
\kern-3cm
\xygraph{!{0;(2.5,0):(0,1)::} [d] *{}="l" [r] *{}="r"
 "l"-@`{"l"+(.2,1),"r"+(-.2,1)}"r" |*++!D{d}
"l" [l(.125)u(.35)] {A^*} "r" [r(.125)u(.35)] {A}  
} 
\]

\noindent 
The duality conditions become string straightening rules.
\[
\xygraph{ !{/r1.4pc/: /u2.75pc/::}
 []*+!D{A}
!{\xcapv-@(0)}
!{\vcap[-]}
[r] (?*!DR{A^*} , !{\vcap} )
[r] !{\xcapv-@(0)} *+!U{A}
} \quad = 
\xygraph{
!{/r1.3pc/: /u2pc/::}
!{\xcapv-@(0)}
!{\xcapv-@(0)} *+!U{A}
} \qquad,\qquad
\xygraph{ !{/r1.4pc/: /u2.75pc/::}
 [rr]*+!D{A^*}
  !{\xcapv-@(0)}
 [l] !{\vcap[-]}
 (?*!DL{A} ,  [l] !{\vcap} )
 [l] !{\xcapv-@(0)} *+!U{A^*}
} \quad = 
\xygraph{
!{/r1.3pc/: /u2pc/::}
!{\xcapv-@(0)}
!{\xcapv-@(0)} *+!U{A^*}
} \]
This allows anticlockwise looping by taking $( \ )^\ast$.
$f^\ast$ is depicted as follows. 
\[ 
f^* \;\; : 
\xygraph{ !{/r2pc/: /u2pc/::}
 [rr]*+!D{B^*}
  !{\xcapv-@(0)} !{\xcapv-@(0)}
 [l] !{\vcap[-]} - [u]*+<8pt>[Fo]{f} _(.3)*{B}
 ( ?-[u]_(.8)*{A} [l] !{\vcap} )
 [lu] !{\xcapv-@(0)\xcapv-@(0)} *+!U{A^*}
} \]
Diagrams are not necessarily ``progressive''  any more.   They can backtrack.

\begin{Example}
An object of $\Vect $ has a right (left) dual if and only if it is finite dimensional (exercise!). 
If $V$ has basis $v_1,\ldots,v_n$ and $v^\ast_1,\ldots,v^\ast_n$ is the ``dual basis'' of $V^\ast = \CC^V$ then we have
\begin{align*}
 V \otimes V^\ast &\stackrel{e}{\longto} \CC    
   & \CC  &\stackrel{d}{\longto}   V^\ast \otimes V  \\
  v \otimes \varphi &\longmapsto \varphi (v)   
   & 1 &\longmapsto \sum^n_{i=1} v^\ast_i \otimes v_i\,.
\end{align*}
\end{Example}

\subsection{Braidings}

A \textit{braiding}~\cite{BTC} on a monoidal category $\catC$ is a natural family of invertible morphisms 
\[
\gamma^{}_{A,B} : A \otimes B  \xylongto{\cong} 
 B \otimes A
\]
satisfying (ignoring $\alpha, \lambda, \rho$)
\[
 \xymatrix @C-10mm{
A \otimes B \otimes C \ar[rr]^{\gamma_{A\otimes B,C}^{}} \ar[rd]_-{1\otimes \gamma_{B,C}^{}} 
& & C \otimes A \otimes B  \\
& A \otimes C \otimes B\ar[ru]_-{\gamma_{A,C}^{} \otimes 1}}  
 \qquad 
\xymatrix @C-10mm{
A \otimes B \otimes C \ar[rr]^{\gamma_{A, B\otimes C}^{}} \ar[rd]_-{\gamma_{A,B}^{} \otimes 1} 
& & B \otimes C \otimes A  \\
& B \otimes A \otimes C \ar[ru]_-{1\otimes \gamma_{A,C}^{}}}   
\]

\begin{Example}
In $\Vect$, $\gamma^{}_{A,B} (a\otimes b) = b \otimes a$.
\end{Example}

\noindent
By writing $\gamma$ as a \emph{crossing}
\[
\xygraph{
 []{\gamma^{}_{A,B} :} [r][u(0.5)] *+!DR{A}( !{\xunderv}, [r]*+!DL{B} , [dr]*+!UL{A} ,  [d]*+!UR{B} )}
\]
our string diagrams move into three-dimensional Euclidean space.   The axioms say
\[
\xygraph{[u(.75)]*+{A} ( [r]*+{B} , [d(1.5)l(.5)]*+{B'} , [dr(1.5)]*+{A'},
  [d]*++=[o][Fo]{b}="b"-"B'" , [dr]*++=[o][Fo]{a}="a"-"A'" ,
!{\xunderv ~{"A"}{"B"}{"b"}{"a"}} 
  )}  = 
\xygraph{[u(.75)]*+{A} ( [rr]*+{B} , [d(1.5)r(.5)]*+{B'} , [dr(1.5)]*+{A'},
  [dr(.5)r]*++=[o][Fo]{b}="b"-"B" , [dr(.5)]*++=[o][Fo]{a}="a"-"A" ,
!{\xunderv ~{"a"}{"b"}{"B'"}{"A'"}} 
  )}  
\]
\[
\xygraph{ !{\knotholesize{15pt}}
!{/r1.8pc/: /u4pc/::}
[d(.5)] ( ?*+!DR{A} , [r] *+!DL{C}
 , !{\xunderv @(.45)}
, [r(.3)] ( ?*+!D{B} , ?- [rd]
))
}
=
\xygraph{
!{/r1.3pc/: /u2pc/::}
 !{\xcapv-@(0)}
!{\vtwistneg} ( []*+!U{C}  [r]*+!U{A}  [r]*+!U{B} ,
[uur] !{\vtwistneg}
[r]!{\xcapv@(0)}
)
}
\quad,\qquad
\xygraph{ 
!{/r1.8pc/: /u4pc/::}
[d(.5)] ( ?*+!DR{A} , [r(.7)] *+!D{B}  , [r] *+!DL{C} ,
!{\xunderv}
, [r(.7)] - [ld] |(.35)*{\khole}
}
= 
\xygraph{
!{/r1.3pc/: /u2pc/::}
[r] !{\vtwistneg}
!{\xcapv-@(0)}
[uurr]
!{\xcapv@(0)}
[l] !{\vtwistneg} 
[l]*+!U{B}  [r]*+!U{C}  [r]*+!U{A} 
}\,.
\]

A monoidal category with chosen braiding is called \emph{braided} in the terminology of Joyal--Street~\cite{BTC}. 
The braiding automatically satisfies what is called the \textit{Yang--Baxter equation} 
or the \textit{braid identity} or a \textit{Reidemeister move} in different contexts. 
Here is the proof:

\[
\xymatrix{
  & B\otimes A \otimes C \ar[r]^-{1\otimes \gamma} 
    & B\otimes C \otimes A \ar[rd]^-{\gamma \otimes 1} \\
 A\otimes B\otimes C 
   \ar[ru]^-{\gamma \otimes 1} \ar[rru]_-{\;\gamma^{}_{A,B\otimes C}} 
    \ar[rd]_-{1\otimes \gamma} \ar@{}[rrr]|{naturality} 
    & & & C\otimes B \otimes A \\
 & A\otimes C \otimes B \ar[rru]^-{\gamma^{}_{A,C\otimes B}} \ar[r]_-{\gamma \otimes 1} 
   & C\otimes A \otimes B \ar[ru]_-{1\otimes \gamma} }
\]
\kern-20pt
\[
\xygraph{
!{/r1.3pc/: /u2pc/::}
!{\vtwistneg}
!{\xcapv-@(0)}
!{\vtwistneg}
[uuurr]
!{\xcapv@(0)}
[l]!{\vtwistneg}
[r]!{\xcapv@(0)}
[r][u(1.5)]
{=}
[r][u(1.5)]
!{\xcapv-@(0)}
!{\vtwistneg}
!{\xcapv-@(0)}
[uuur]
!{\vtwistneg}
[r]!{\xcapv@(0)}
[l]!{\vtwistneg}
}
\]
\noindent
This leads naturally to our second example of braided monoidal category.

\subsubsection*{{\it Braid category $\BB$}}

Objects are natural numbers 0, 1, 2, \ldots
\[
\BB(m,n) = \begin{cases}
\emptyset  &  \text{when } m \neq n  \\
\mbox{braid group} \ B_n \text{ on } n \text{ strings } & \text{when } m=n
\end{cases}
\]
The functor $\BB \times \BB \stackrel{\otimes}{\longto} \BB$ is defined on objects by 
$(m,n) \longmapsto m+n$ and on morphisms as indicated by:
\[
\xy
0;/r.18pc/:
(0,0)*{\xy
(-15,20);(25,20) **\dir{-};
(-15,0);(25,0) **\dir{-};
(-10,20)*{\bullet}="T1"; 
(0,20)*{\bullet}="T2"; 
(10,20)*{\bullet}="T3"; 
(20,20)*{\bullet}="T4";
(-10,0)*{\bullet}="B1"; 
(0,0)*{\bullet}="B2"; 
(10,0)*{\bullet}="B3"; 
(20,0)*{\bullet}="B4";
(-5,15)*{\hole}="a";
(-5,15)*{}="a'";
(-5,8)*{\hole}="b";
(-5,8)*{}="b'";
"T1";"a'" **\crv{"T1"+(0,-3)&"a'"+(-2,2)};
"T2";"a" **\crv{"T2"+(0,-3)&"a"+(2,2)};
"a'";"b" **\crv{"a'"+(2,-2)&"b"+(2,2)};
"a";"b'" **\crv{"a"+(-2,-2)&"b'"+(-2,2)};
"b";"B1" **\crv{"b"+(-2,-2)&"B1"+(0,5)};
(15,12)*{\hole}="c";
(15,12)*{}="c'";
"T3";"c'" **\crv{"T3"+(0,-3)&"c'"+(-2,2)};
"T4";"c" **\crv{"T4"+(0,-3)&"c"+(2,2)};
"c";"B2" **\crv{"c"+(2,-2)&"B2"+(0,3)} \POS?(.5)*{\hole}="d";
"b'";"d" **\crv{"b'"+(2,-2)&"d"+(-2,1)};
"d";"B4" **\crv{"d"+(2,-1)&"B4"+(0,3)} \POS?(.5)*{\hole}="e";
"c'";"e" **\crv{"c'"+(2,-2)&"e"+(1,2)};
"e";"B3" **\crv{"e"+(-1,-2)&"B3"+(0,3)};
\endxy};
(25,0)*{\otimes};
(40,0)*{\xy
(-15,20);(5,20) **\dir{-};
(-15,0);(5,0) **\dir{-};
(-10,20)*{\bullet}="T1"; 
(0,20)*{\bullet}="T2"; 
(-10,0)*{\bullet}="B1"; 
(0,0)*{\bullet}="B2"; 
(-5,14)*{\hole}="a";
(-5,14)*{}="a'";
(-5,6)*{\hole}="b";
(-5,6)*{}="b'";
"T1";"a" **\crv{"T1"+(0,-3)&"a"+(-2,2)};
"T2";"a'" **\crv{"T2"+(0,-3)&"a'"+(2,2)};
"a";"b'" **\crv{"a"+(2,-2)&"b'"+(2,2)};
"a'";"b" **\crv{"a'"+(-2,-2)&"b"+(-2,2)};
"b'";"B1" **\crv{"b'"+(-2,-2)&"B1"+(0,5)};
"b";"B2" **\crv{"b"+(2,-2)&"B2"+(0,5)};
\endxy};
(55,0)*{=};
(90,0)*{\xy
(-15,20);(45,20) **\dir{-};
(-15,0);(45,0) **\dir{-};
(-10,20)*{\bullet}="T1"; 
(0,20)*{\bullet}="T2"; 
(10,20)*{\bullet}="T3"; 
(20,20)*{\bullet}="T4";
(30,20)*{\bullet}="T5";
(40,20)*{\bullet}="T6";
(-10,0)*{\bullet}="B1"; 
(0,0)*{\bullet}="B2"; 
(10,0)*{\bullet}="B3"; 
(20,0)*{\bullet}="B4";
(30,0)*{\bullet}="B5";
(40,0)*{\bullet}="B6";
(-5,15)*{\hole}="a";
(-5,15)*{}="a'";
(-5,8)*{\hole}="b";
(-5,8)*{}="b'";
"T1";"a'" **\crv{"T1"+(0,-3)&"a'"+(-2,2)};
"T2";"a" **\crv{"T2"+(0,-3)&"a"+(2,2)};
"a'";"b" **\crv{"a'"+(2,-2)&"b"+(2,2)};
"a";"b'" **\crv{"a"+(-2,-2)&"b'"+(-2,2)};
"b";"B1" **\crv{"b"+(-2,-2)&"B1"+(0,5)};
(15,12)*{\hole}="c";
(15,12)*{}="c'";
"T3";"c'" **\crv{"T3"+(0,-3)&"c'"+(-2,2)};
"T4";"c" **\crv{"T4"+(0,-3)&"c"+(2,2)};
"c";"B2" **\crv{"c"+(2,-2)&"B2"+(0,3)} \POS?(.5)*{\hole}="d";
"b'";"d" **\crv{"b'"+(2,-2)&"d"+(-2,1)};
"d";"B4" **\crv{"d"+(2,-1)&"B4"+(0,3)} \POS?(.5)*{\hole}="e";
"c'";"e" **\crv{"c'"+(2,-2)&"e"+(1,2)};
"e";"B3" **\crv{"e"+(-1,-2)&"B3"+(0,3)};
(35,14)*{\hole}="g";
(35,14)*{}="g'";
(35,6)*{\hole}="h";
(35,6)*{}="h'";
"T5";"g" **\crv{"T5"+(0,-3)&"g"+(-2,2)};
"T6";"g'" **\crv{"T6"+(0,-3)&"g'"+(2,2)};
"g";"h'" **\crv{"g"+(2,-2)&"h'"+(2,2)};
"g'";"h" **\crv{"g'"+(-2,-2)&"h"+(-2,2)};
"h'";"B5" **\crv{"h'"+(-2,-2)&"B5"+(0,5)};
"h";"B6" **\crv{"h"+(2,-2)&"B6"+(0,5)};
\endxy};
\endxy
\]
The isomorphisms $\alpha, \lambda, \rho$ are identities.

\medskip\noindent
The braiding $\gamma^{}_{m,n} $ is indicated by:
\[
\xy
(-25,18)*+{4+2};(-25,0)*+{2+4} **\dir{-} \POS?(.5)*+!R{\gamma_{4,2}} ?>*\dir{>};
(5,22)*+{m};(-10,22)*{} **\dir{-} ?>*\dir{>};
(5,22)*+{\hole};(20,22)*{} **\dir{-} ?>*\dir{>};
(35,22)*+{n};(30,22)*{} **\dir{-} ?>*\dir{>};
(35,22)*+{\hole};(40,22)*{} **\dir{-} ?>*\dir{>};
(-15,18);(45,18) **\dir{-};
(-15,0);(45,0) **\dir{-};
(-10,18)*{\bullet}="T1"; 
(0,18)*{\bullet}="T2"; 
(10,18)*{\bullet}="T3"; 
(20,18)*{\bullet}="T4";
(30,18)*{\bullet}="T5";
(40,18)*{\bullet}="T6";
(-10,0)*{\bullet}="B1"; 
(0,0)*{\bullet}="B2"; 
(10,0)*{\bullet}="B3"; 
(20,0)*{\bullet}="B4";
(30,0)*{\bullet}="B5";
(40,0)*{\bullet}="B6";
"T1";"B3" **\crv{} \POS?(.8333)*{\hole}="h" \POS?(.8)*{\hole}="d";
"T2";"B4" **\crv{} \POS?(.6667)*{\hole}="g" \POS?(.75)*{\hole}="c";
"T3";"B5" **\crv{} \POS?(.5)*{\hole}="f" \POS?(.6667)*{\hole}="b";
"T4";"B6" **\crv{} \POS?(.3333)*{\hole}="e" \POS?(.5)*{\hole}="a" ;
"T5";"a" **\crv{};
"a";"b" **\crv{};
"b";"c" **\crv{};
"c";"d" **\crv{};
"d";"B1" **\crv{};
"T6";"e" **\crv{};
"e";"f" **\crv{};
"f";"g" **\crv{};
"g";"h" **\crv{};
"h";"B2" **\crv{};
\endxy
\]

\begin{Proposition} 
$\BB$ is the free braided monoidal category generated by a single object.
\end{Proposition}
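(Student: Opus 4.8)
The plan is to verify the universal property directly. Because $\BB$ is strict and every object is a tensor power $n = 1^{\otimes n}$ of the generating object $1$, while every morphism of $\BB$ is a braid, the whole problem reduces to understanding how an arbitrary braid is assembled from the single elementary crossing $c := \gamma_{1,1}\colon 2 \longto 2$, and to matching the relations among such crossings with the braided-monoidal axioms. Concretely, I want to show: for every braided monoidal category $\catC$ and every object $X$ of $\catC$, there is an essentially unique braided strong monoidal functor $F\colon \BB \longto \catC$ with $F(1)=X$; this is exactly the assertion that $\BB$ is free on one object.

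First I would invoke the Artin presentation of the braid group: $B_n$ is generated by the elementary crossings $\sigma_1,\dots,\sigma_{n-1}$ subject to the far-commutation relations $\sigma_i\sigma_j = \sigma_j\sigma_i$ for $|i-j|\geq 2$ and the braid relation $\sigma_i\sigma_{i+1}\sigma_i = \sigma_{i+1}\sigma_i\sigma_{i+1}$. In $\BB$ the generator $\sigma_i \in B_n$ is precisely $1^{\otimes(i-1)}\otimes c\otimes 1^{\otimes(n-i-1)}$. This is the structural input that pins down the generating data: $\BB$ is generated, as a braided monoidal category, by the single object $1$ together with its self-braiding $c$ — and $c$ is not extra free data, since any braided monoidal functor is obliged to preserve the braiding.

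For existence of $F$, set $F(n) = X^{\otimes n}$, with the strong-monoidal comparison constraints supplied by the coherence isomorphisms of $\catC$, send $\sigma_1 = c$ to $\gamma_{X,X}$ (and $\sigma_1^{-1}$ to $\gamma_{X,X}^{-1}$, which exists since the braiding is invertible), and more generally send $\sigma_i \in B_n$ to $1^{\otimes(i-1)}\otimes\gamma_{X,X}\otimes 1^{\otimes(n-i-1)}$. To see this is well defined I would check the two Artin families against the axioms. The far-commutation relations hold because the two braidings occupy disjoint tensor slots and therefore commute by the interchange law, i.e. bifunctoriality of $\otimes$. The braid relation is precisely the Yang--Baxter equation, which, as the string-diagram computation just above shows, holds automatically in any braided monoidal category. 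Hence $F$ restricts to a well-defined group homomorphism $B_n \longto \catC(X^{\otimes n},X^{\otimes n})$ on each hom-set; preservation of composition, identities, and the tensor (side-by-side placement of braids is tensoring) is then immediate. It remains to confirm that $F$ preserves the braiding, i.e. $F(\gamma_{m,n}) = \gamma_{X^{\otimes m},X^{\otimes n}}$: the block-transposition braid $\gamma_{m,n}$ factors into a definite word in the $\sigma_i$, and by repeatedly applying the two braiding hexagons displayed above one decomposes $\gamma_{X^{\otimes m},X^{\otimes n}}$ into the identical word in the elementary braidings $\gamma_{X,X}$, so an induction on $m+n$ completes the comparison.

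For uniqueness, any braided strong monoidal $G\colon \BB \longto \catC$ with $G(1)=X$ is forced on objects by monoidality, $G(n)\cong X^{\otimes n}$, and forced on the generating morphism by braiding-preservation, $G(c)=\gamma_{X,X}$; since the $\sigma_i$ generate every $B_n$ and $G$ preserves composition and $\otimes$, it agrees with $F$ up to the unique monoidal natural isomorphism built from the coherence constraints of $\catC$, and on the nose in the strict case. The main obstacle is the well-definedness of $F$ on morphisms — equivalently, that the braided-monoidal axioms impose exactly the Artin relations and no more. This is where the Artin presentation does the essential work on the source side and where the already-established Yang--Baxter identity supplies precisely the braid relation on the target side; once these two facts are in hand the remainder is bookkeeping. (Viewed abstractly, this is the coherence theorem of Joyal--Street~\cite{BTC} identifying the hom-sets of the free braided monoidal category on one object with the braid groups.)
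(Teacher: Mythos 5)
Your core argument is sound and is, in outline, the standard proof; note that the paper itself offers no proof at all --- it only names the generating object $1$, exhibits the generators $s_i$ as tensorings $1^{\otimes(i-1)}\otimes\gamma^{}_{1,1}\otimes 1^{\otimes(n-i-1)}$, and translates the Proposition into the equivalence $\BStMon(\BB,\catC)\simeq\catC$, with the actual verification living in Joyal--Street~\cite{BTC}. Your three pillars --- the Artin presentation on the source side, interchange (bifunctoriality of $\otimes$) plus the Yang--Baxter identity on the target side for well-definedness, and the hexagon-driven induction identifying $F(\gamma^{}_{m,n})$ with $\gamma^{}_{X^{\otimes m},X^{\otimes n}}$ --- are exactly the right ingredients, and the coherence bookkeeping you wave through (strict source, non-strict target) is genuinely routine.

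There is, however, a gap between what you prove and what the Proposition asserts. As the paper makes explicit, freeness here means an equivalence of \emph{categories}
\[
\BStMon(\BB,\catC)\simeq\catC\,,
\]
so evaluation at $1$ must be essentially surjective \emph{and fully faithful}. You establish essential surjectivity (existence of $F$ with $F(1)=X$) and essential uniqueness (any two such functors are monoidally isomorphic), but you never treat the morphism level: you must show that for braided strong monoidal $F,G:\BB\longto\catC$, every morphism $f:F(1)\longto G(1)$ in $\catC$ is the component at $1$ of exactly one monoidal natural transformation $F\Longrightarrow G$. This is not a consequence of your uniqueness argument, which compares functors rather than transformations, though the missing verification is short: monoidality forces the component at $n$ to be the coherence-conjugate of $f^{\otimes n}$, which gives faithfulness, and naturality against an arbitrary braid reduces, via the generators $\sigma_i$ and preservation of composition, to the single square
\[
\gamma^{}_{G1,G1}\circ(f\otimes f)=(f\otimes f)\circ\gamma^{}_{F1,F1}\,,
\]
which is precisely naturality of the braiding of $\catC$. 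Adding this paragraph closes the gap; without it you have proved a strictly weaker universal property than the one the Proposition, as glossed in the paper, claims.
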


\noindent 
The single generating object is $1$ and tensoring the braiding $\gamma^{}_{1,1}$ on both sides with identity morphisms gives the generators $s_i$ for the braid group $B_n$ as indicated:
\[
\xy
(-30,18)*+{n};(-30,0)*+{n} **\dir{-} \POS?(.5)*+!R{s_i} ?>*\dir{>};
(-19,18);(-9,18) **\dir{-};
(-9,18);(-5,18) **\dir{.};
(-5,18);(35,18) **\dir{-};
(35,18);(39,18) **\dir{.};
(39,18);(49,18) **\dir{-};
(-19,0);(-9,0) **\dir{-};
(-9,0);(-5,0) **\dir{.};
(-5,0);(35,0) **\dir{-};
(35,0);(39,0) **\dir{.};
(39,0);(49,0) **\dir{-};
(-14,18)*{\bullet}="T1"+(0,4)*{1};
(0,18)*{\bullet}="T2"+(0,4)*{i-1};
(10,18)*{\bullet}="T3"+(0,4)*{i};
(20,18)*{\bullet}="T4"+(0,4)*{i+1};
(30,18)*{\bullet}="T5"+(0,4)*{i+2};
(44,18)*{\bullet}="T6"+(0,4)*{n};
(-14,0)*{\bullet}="B1"; 
(0,0)*{\bullet}="B2"; 
(10,0)*{\bullet}="B3"; 
(20,0)*{\bullet}="B4";
(30,0)*{\bullet}="B5";
(44,0)*{\bullet}="B6";
"T1";"B1" **\dir{-};
"T2";"B2" **\dir{-};
(15,9)*{\hole}="a";
(15,9)*{}="a'";
"T3";"a'" **\crv{"T3"+(0,-4)&"a'"+(-2,2)};
"T4";"a" **\crv{"T4"+(0,-4)&"a"+(2,2)};
"a";"B3" **\crv{"a"+(-2,-2)&"B3"+(0,4)};
"a'";"B4" **\crv{"a'"+(2,-2)&"B4"+(0,4)};
"T5";"B5" **\dir{-};
"T6";"B6" **\dir{-};
\endxy
\]
The Proposition means that there is an equivalence of categories
\[
\BStMon(\BB,\catC) \simeq \catC
\]
between the category of braiding-and-tensor-preserving functors $\BB \to \catC$ and the category $\catC$ itself, for every braided monoidal category $\catC$.

\bigskip

A \emph{Yang--Baxter operator} $y : A \otimes A \to A \otimes A$ on an object  $A$ of any monoidal category $\catC$ is an invertible morphism  satisfying commutativity of the following hexagon. There is an obvious category $\mathrm{YB}\catC$ of such pairs $(A,y)$.

\begin{equation*}
\xymatrix{& A^{\otimes 3} \ar[r]^{1\otimes y} & A^{\otimes 3} \ar[rd]^{y\otimes 1} \\
 A^{\otimes 3}\ar[ru]^{y\otimes 1}  \ar[rd]_{1\otimes y}  & & & A^{\otimes 3} \\
 & A^{\otimes 3}  \ar[r]_{y\otimes 1 \ } & A^{\otimes 3} \ar[ru]_{1\otimes y} }
 \end{equation*}
If $\catC$ is braided, each object $A$ has a Yang--Baxter operator on it, namely,
\[
y =  \gamma^{}_{A,A} : A \otimes A \longto A \otimes A\,.
\]

\begin{Proposition} 
$\BB$  is the free monoidal category generated by an object bearing a Yang--Baxter operator.   That is, for every monoidal category $\catC$, 
\[
\StMon(\BB,\catC) \simeq \YB \catC\,.
\]
\end{Proposition}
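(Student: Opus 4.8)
The plan is to realise the equivalence through the \emph{evaluation-at-the-generator} functor $\ev\colon\StMon(\BB,\catC)\longto\YB\catC$ and to prove it is an equivalence by exhibiting an explicit section and then checking full faithfulness. First I would define $\ev$. A tensor-preserving functor $F\colon\BB\to\catC$ sends the generating object $1$ to an object $A=F1$ and the elementary crossing $s_1=\gamma^{}_{1,1}\in\BB(2,2)$ to a morphism $y=Fs_1\colon A\otimes A\to A\otimes A$. Since $s_1$ is invertible in the group $B_2$, $y$ is invertible; and since the braid relation $s_1s_2s_1=s_2s_1s_2$ holds in $B_3$, applying $F$ (which, being monoidal, carries $s_1,s_2\in B_3$ to $y\otimes 1$ and $1\otimes y$) yields exactly the Yang--Baxter hexagon for $y$. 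Thus $\ev F=(A,y)$ lies in $\YB\catC$. On a monoidal natural transformation $\theta\colon F\Implies G$ I set $\ev\theta=\theta_1$; naturality of $\theta$ at $s_1$ is precisely the statement that $\theta_1$ is a morphism of Yang--Baxter pairs.

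For essential surjectivity I would construct an explicit section: given $(A,y)$, set $Fn=A^{\otimes n}$ and, on the group $B_n=\BB(n,n)$, send the Artin generator $s_i$ to $1^{\otimes(i-1)}\otimes y\otimes 1^{\otimes(n-1-i)}$. To see that this extends to a homomorphism $B_n\to\Aut_{\catC}(A^{\otimes n})$ I must verify the two families of braid relations. The far-commutativity $s_is_j=s_js_i$ for $|i-j|\ge 2$ holds because the two operators act on disjoint tensor factors, i.e.\ by bifunctoriality of $\otimes$. The relation $s_is_{i+1}s_i=s_{i+1}s_is_{i+1}$ reduces, after tensoring with identities on both sides, to the case $n=3$, $i=1$, which is exactly the Yang--Baxter hexagon satisfied by $y$. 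Since $\BB(m,n)=\emptyset$ for $m\ne n$, these homomorphisms assemble into a single functor, and because placing braids side by side in $\BB$ corresponds to tensoring the operators, $F$ is tensor-preserving and satisfies $\ev F=(A,y)$ on the nose.

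It remains to prove full faithfulness. Because $\BB$ is generated under $\otimes$ by the single object $1$, the monoidality constraint on a natural transformation forces $\theta_n=\theta_1^{\otimes n}$, so $\theta$ is determined by $\theta_1$. Conversely, any morphism $\phi\colon(A,y_F)\to(A',y_G)$ of Yang--Baxter pairs produces components $\theta_n:=\phi^{\otimes n}$, and naturality with respect to an arbitrary braid follows from naturality with respect to each generator $s_i$, which is just the intertwining identity $(\phi\otimes\phi)\,y_F=y_G\,(\phi\otimes\phi)$ tensored with identities. Hence $\ev$ is fully faithful, and combined with the section above it is an equivalence, giving $\StMon(\BB,\catC)\simeq\YB\catC$.

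The main obstacle is the well-definedness of $F$ on the braid groups, namely the verification that the chosen images of the Artin generators respect all the braid relations; this is exactly the point at which the Yang--Baxter hexagon is indispensable and is the technical heart of the argument. A secondary caveat is that $\catC$ need not be strict, so $A^{\otimes n}$ must be assigned a fixed bracketing and the generators interpreted with the structural isomorphisms $\alpha,\lambda,\rho$; invoking Mac Lane's coherence theorem (equivalently, first strictifying $\catC$) ensures these choices are harmless and that the relation-checking above is unaffected.
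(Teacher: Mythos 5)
Your proof is correct, and in fact there is nothing in the paper to compare it against: the Proposition is stated without proof, being quoted as a known result of Joyal and Street (their work on tortile Yang--Baxter operators), and the only indications in the text are those attached to the preceding Proposition — the generating object is $1$, and the generators $s_i$ of $B_n$ arise by tensoring $\gamma^{}_{1,1}$ on both sides with identities. Your argument is exactly the standard proof that this sketch points to: evaluation at the generator, the Artin presentation of $B_n$ to get well-definedness of the section (far commutativity $s_is_j=s_js_i$ from functoriality of $\otimes$, the adjacent relation from the Yang--Baxter hexagon), and reduction of all naturality and monoidality checks to the generators. One point of hygiene: strictly speaking $Fs_1$ is an endomorphism of $F2$, not of $A\otimes A$, so $y$ must be defined by conjugating with the monoidal constraint $F1\otimes F1\cong F2$, and likewise the identification of $s_1,s_2\in B_3$ with $y\otimes 1$ and $1\otimes y$ holds only up to such conjugation; your closing appeal to coherence/strictification is the right way to dispose of this, but it should be read as covering these constraint conjugations as well as the bracketing of $A^{\otimes n}$. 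With that reading, every step — well-definedness on $B_n$, strong monoidality of the section, and full faithfulness of $\ev$ — is sound.
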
 

\begin{Proposition} 
Any right autonomous braided monoidal category is left autonomous.
\end{Proposition}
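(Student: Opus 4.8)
The plan is to show that the chosen right dual $A^\ast$ of each object $A$ is simultaneously a \emph{left} dual of $A$, the left-duality data being obtained from the right-duality data by conjugation with the braiding. Fix $A$ with right dual $A^\ast$, counit $e\colon A\otimes A^\ast\to I$ and unit $d\colon I\to A^\ast\otimes A$. A left dual of $A$ is an object $B$ with morphisms $B\otimes A\to I$ and $I\to A\otimes B$ satisfying the two triangle (duality) identities with the roles of the objects interchanged. I would take $B=A^\ast$ and set
\[
e'\colon A^\ast\otimes A \xrightarrow{\gamma^{-1}_{A,A^\ast}} A\otimes A^\ast \xrightarrow{e} I,
\qquad
d'\colon I \xrightarrow{d} A^\ast\otimes A \xrightarrow{\gamma_{A^\ast,A}} A\otimes A^\ast.
\]

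The heart of the argument is to verify the two triangle identities for $(e',d')$. I would substitute the definitions, so that each composite carries two braiding crossings, and then transport these crossings onto the unit and counit using naturality of $\gamma$ (with $\gamma_{-,I}=\mathrm{id}$) together with the hexagon axioms. Concretely, for the first identity the crossing coming from $d'$ merges into the braided unit, and pushing the crossing coming from $e'$ past it via the hexagon leaves a single residual self-crossing $\gamma_{A^\ast,A^\ast}$; what remains is precisely one of the \emph{original} triangle identities for the right duality $A\dashv A^\ast$, rewritten by the same naturality move, so the two expressions coincide and the composite is the identity. The second identity is handled symmetrically, using the other hexagon and the remaining original triangle identity. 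In string-diagram language this is just the statement that the braided cap and cup straighten out by a Reidemeister-II move, leaving the ordinary zig-zag.

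The main obstacle is getting the handedness right. The naive choice that uses the \emph{same} braiding $\gamma_{A^\ast,A}$ in both $e'$ and $d'$ does not work: pushing the crossing through the hexagon then produces the residual self-braiding $\gamma_{A^\ast,A^\ast}^{-1}$, which has the \emph{wrong} sign to be absorbed by the surviving zig-zag, and the triangle identity holds only in the symmetric case. One is forced to use the inverse braiding on the counit side and the braiding on the unit side, as above; then the hexagon (in the form satisfied by the inverse braiding) delivers the residual $\gamma_{A^\ast,A^\ast}$ with exactly the sign the original identity requires. Once the handedness is pinned down, tracking the suppressed associativity and unit isomorphisms is routine.

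Finally I would record a structural reformulation that makes the result transparent and confirms that the two triangle identities necessarily hold together. The braiding is precisely a strong monoidal structure on the identity functor $\catC\to\catC^{\rev}$, where $\catC^{\rev}$ denotes $\catC$ with the reversed tensor product (this merely repackages the hexagon axioms and naturality). A right dual in $\catC^{\rev}$ unwinds, by definition of $\otimes$ in $\catC^{\rev}$, to a left dual in $\catC$. Since a strong monoidal equivalence transports right-duality data to right-duality data, right autonomy of $\catC$ forces right autonomy of $\catC^{\rev}$, which is exactly left autonomy of $\catC$; the transported structure maps are the $e'$ and $d'$ written above.
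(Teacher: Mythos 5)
Your construction is correct and is essentially the paper's own proof: the paper likewise makes $A^\ast$ a left dual of $A$ by composing the given counit and unit with one braiding and one inverse braiding, only with the opposite (mirror) assignment, namely $e' = e \circ \gamma^{}_{A^\ast,A}$ and $d' = \gamma^{-1} \circ d$. The one genuine error is your side claim that one is \emph{forced} to put the inverse braiding on the counit side and the braiding on the unit side: both opposite-sign assignments work, since yours is obtained from the paper's by replacing $\gamma$ with the mirror braiding $\tilde{\gamma}^{}_{X,Y} = \gamma^{-1}_{Y,X}$ (which is again a braiding), so your remark wrongly rules out the paper's own formulas. What is true, as you observe, is that using the \emph{same} sign of crossing on both the counit and the unit fails outside the symmetric case; but within the opposite-sign options the handedness is a free choice, not a forced one.
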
 

\noindent
To see this we take $A^\ast \otimes A \stackrel{\gamma}{\longto}  A \otimes A^\ast   \stackrel{e}{\longto}  I$ 
and $I   \stackrel{d}{\longto}  A^\ast \otimes A \stackrel{\gamma^{-1}}{\longto}  A \otimes A^\ast$ as counit 
and unit for $A^\ast$ as left dual to $A$. 

\subsection{Trace, tangles and link invariants}

The \emph{trace} of an endomorphism
$ f : A    \longto   A $
in an autonomous braided monoidal category is defined by
\[
\Tr (f) = \Bigl(I \stackrel{d}{\longto} A^\ast \otimes A \xylongto{1 \otimes f} 
  A^\ast \otimes A \stackrel{e'}{\longto}  I\Bigr)\,.
\]
\[
\xygraph{
!{/r1.3pc/: /u1.6pc/::}
!{\vcap} ( [r] - [dd] ^(.3)*{A}|(.8)*+[Fo]{f}
, ? - [dd] _(.3)*{A^*}
!{\vtwistneg}
!{\vcap[-]}
)}
\]

\begin{Example} 
If $V \in \Vect $ is finite dimensional with basis $v^{}_1,\ldots,v^{}_n$ and
$f$ is defined by $f(v^{}_i) = \sum_j a^{}_{ij} v^{}_j$, then
\begin{align*}
\Tr(f) :  \CC  &\longto \CC \\
                  1 &\longmapsto \displaystyle \sum_i a^{}_{ii}\,.
\end{align*}
\end{Example}

\subsubsection*{{\it Tangle category $\TT$}}
This is a monoidal category first defined by Yetter~\cite{Yetter1988}. 
The objects are words $- + + - - +$ in symbols $+$ and $-$.    
Morphisms are tangles
\[
\xy
(0,20)*+!D{-}="t1";(10,20)*+!D{-}="t2";(20,20)*+!D{-}="t3";
(30,20)*+!D{+}="t4";(40,20)*+!D{+}="t5"; (50,20)*+!D{+}="t6";(60,20)*+!D{-}="t7";
(0,-10)*+!U{-}="b1";(10,-10)*+!U{+}="b2";(20,-10)*+!U{-}="b3";
"t3"+(3,-8)="a";
"t2"+(8,-15)="c";
"t3";"a"*{\hole} **\crv{"t3"+(0,-3)&"a"+(-1,2)} \POS?(.5)="b";
"t4";"b"*{\hole} **\crv{"t4"+(0,-1)&"b"+(1,1)} \POS?(.7)*\dir{>};
"t2";"c"*{\hole} **\crv{"t2"+(0,-2)&"c"+(-3,2)} \POS?(.8)="d" \POS?(.6)="e";
"b"*{\hole};"d"*{\hole} **\crv{"b"-(2,2)&"d"+(3,1)} \POS?(.5)="f";
"e"*{\hole};"f"*{\hole} **\crv{"f"};
"f"*{\hole};"a" **\crv{"f"&"a"+(-2,0)};
"e"+(-8,-2)="g";
"e"*{\hole};"g" **\crv{"e"+(1,0)&"g"+(2,1)} \POS?(.5)*\dir{<};
"t1";"g"*{\hole} **\crv{"t1"+(0,-3)&"g"+(0,3)} \POS?(.5)*\dir{<};
"d"+(-4,-4)="h";
"h"*{\hole};"b1" **\crv{"h"+(-4,0)&"b1"+(1,2)} \POS?(.5)="j" \POS?(.25)="k" \POS?(.65)*\dir{<};
"h"*{\hole};"c" **\crv{"h"+(2,0)&"c"+(-2,0)};
"d"*{\hole};"h" **\crv{"d"-(3,1)&"h"+(0,2)};
"g";"g"+(-5,-3)="i" **\crv{"g"+(-2,-1)&"i"+(2,2)};
"i";"i"*{\hole} **\crv{"i"-(8,8)&"i"+(-8,8)};
"i"*{\hole};"j"*{\hole} **\crv{"i"+(2,-2)&"j"+(-2,2)};
"g"*{\hole};"k"*{\hole} **\crv{"g"-(0,2)&"k"+(-3,1)};
"k"*{\hole};"b3" **\crv{"k"+(3,-1)&"b3"+(0,3)} \POS?(.3)="m" \POS?(.6)="l";
"j"*{\hole};"l"*{\hole} **\crv{"j"+(2,-2)&"l"+(-3,0)} \POS?(.4)="n";
"h";"m"*{\hole} **\crv{"h"-(0,2)&"m"+(2,2)};
"m"*{\hole};"n"*{\hole} **\crv{"m"-(2,2)&"n"+(1,2)};
"n"*{\hole};"b2" **\crv{"n"-(1,2)&"b2"+(0,2)};
"l"+(12,-1)="o";
"l"*{\hole};"o"*{\hole} **\crv{"l"+(2,0)&"o"+(-2,0)};
"c"*{\hole};"o" **\crv{"c"-(-3,2)&"o"+(-1,3)};
"o";"t6" **\crv{"o"+(4,-12)&"t6"-(0,5)} \POS?(.6)*\dir{<};
"a"+(6,-4)="p";
"a";"p"*{\hole} **\crv{"a"+(3,0)&"p"+(-2,2)};
"p"*{\hole};"o"*{\hole} **\crv{"p"+(2,-2)&"o"+(4,0)} \POS?(.5)="q";
"a"*{\hole};"q"*{\hole} **\crv{"a"+(1,-2)&"q"+(-4,0)} \POS?(.45)="r";
"q"*{\hole};"p" **\crv{"q"+(6,0)&"p"+(4,2)};
"p";"r"*{\hole} **\crv{"p"-(2,1)&"r"+(2,1)};
"r"*{\hole};"c" **\crv{"r"-(2,1)&"c"+(2,0)};
"t5";"t7" **i\crv{"t5"+(0,-10)&"t7"+(0,-10)} \POS?(.42)="s";
"t5";"s"*{\hole} **\crv{"t5"+(0,-5)&"s"-(5,0)};
"s"*{\hole};"t7" **\crv{"s"+(3,0)&"t7"+(0,-10)} \POS?(.5)*\dir{>}
\endxy
\]

Freyd--Yetter~\cite{FreydYetter1989}  showed that $\TT$ is an autonomous braided monoidal category with a ``freeness property''. 
We can see that $\BB$ is the braided monoidal subcategory of $\TT$ by identifying $n \in \BB$ with  $\underbrace{+ \ldots +}_n \in \TT$.   
The braids occur as the progressive endomorphisms in $\TT$.   
The endomorphisms of the unit object 0 are precisely the \emph{oriented links}.

\begin{Proposition}
The trace of a braid in $\TT$ is the link obtained as the Markov closure.
\end{Proposition}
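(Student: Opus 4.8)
The plan is to prove this by a direct diagrammatic computation inside $\TT$, exploiting the fact that every morphism of $\TT$ is an isotopy class of oriented tangles, so that the identity to be proved is an identity of links up to ambient isotopy. Recall from the preceding discussion that $\TT(0,0)$ is exactly the set of oriented links, and that $B_n$ sits in $\TT$ as the progressive endomorphisms of the object $\mathbf n = \underbrace{+\cdots+}_{n}$. Given $\beta\in B_n\subseteq\TT(\mathbf n,\mathbf n)$, its trace is the endomorphism of the unit object $0$ given by $\Tr(\beta)=e'\circ(1_{\mathbf n^\ast}\otimes\beta)\circ d$, hence is a link; the task is simply to recognise this particular link as the Markov closure $\hat\beta$.

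First I would write down the structural morphisms concretely. The chosen right dual of $\mathbf n$ is $\mathbf n^\ast=\underbrace{-\cdots-}_{n}$ (the reversed, sign-flipped word), and in $\TT$ the duality unit $d:0\to\mathbf n^\ast\otimes\mathbf n$ and counit $e:\mathbf n\otimes\mathbf n^\ast\to 0$ are the evident families of $n$ nested cups and $n$ nested caps, each joining a $-$ endpoint to its partner $+$; that these satisfy the two zig-zag identities is just the fact that a cup followed by a cap may be pulled straight. Since, by the Proposition that a right autonomous braided category is left autonomous, the trace is formed using the braided counit, the relevant morphism is $e'=e\circ\gamma^{}_{\mathbf n^\ast,\mathbf n}$.

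Next I would assemble and simplify the picture for $\Tr(\beta)$. Reading from the bottom: the $n$ cups of $d$ open $n$ arcs; the right block of strands then carries the braid $\beta$ while the left block $\mathbf n^\ast$ is the identity; the braiding $\gamma^{}_{\mathbf n^\ast,\mathbf n}$ carries the whole dual block across the braided block; and finally $e$ caps everything off. The essential point, valid for an \emph{arbitrary} middle tangle $T:\mathbf n\to\mathbf n$ and not just a braid, is that the composite $e'\circ(1_{\mathbf n^\ast}\otimes T)\circ d$ realises the operation of joining the top endpoint $i$ of $T$ to its bottom endpoint $i$ by an arc running around the side of the box: using only planar isotopy and the zig-zag identities to straighten the cups and caps, the braiding $\gamma$ is exactly what routes the $n$ closing strands around $\beta$. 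This is the geometric recipe defining the Markov closure, so $\Tr(\beta)\cong\hat\beta$ in $\TT$, and taking $T=\beta$ specialises to the statement. Note that it is the \emph{braided} counit $e'$, rather than the bare $e$, that produces this around-the-side closure rather than a plat-type closure pairing adjacent endpoints.

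I expect the main obstacle to be this middle step: making rigorous, rather than merely visually plausible, that $\gamma^{}_{\mathbf n^\ast,\mathbf n}$ threads the closing arcs so that endpoint $i$ meets endpoint $i$ with the correct nesting and with no residual crossings, and that all orientations are consistent (every strand of $\mathbf n$ oriented one way, every strand of $\mathbf n^\ast$ the opposite way, so that $d$ and $e$ respect orientation). The cleanest route is to exhibit an explicit ambient isotopy of the tangle box carrying the assembled $e'\circ(1\otimes\beta)\circ d$ diagram to the standard closure diagram; one may read off this isotopy first for a single crossing $s_i$, where the closing arcs are immediate, and then observe that vertical juxtaposition of braid boxes corresponds both to composition in $B_n$ and to stacking of closure diagrams, so that the identification is inherited by an arbitrary product of generators. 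Everything is then isotopy-invariant in $\TT$, which is what legitimises replacing each structural morphism by a convenient geometric representative throughout the computation.
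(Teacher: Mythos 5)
Your proposal is correct and takes essentially the same approach as the paper: the paper gives no written argument beyond drawing the trace of sample braids (whose string diagrams are visibly their Markov closures), and your computation — identifying $d$, $e$ and $e'=e\circ\gamma^{}_{\mathbf{n}^\ast,\mathbf{n}}$ in $\TT$ with cups, caps and the around-the-side routing, then isotoping the assembled diagram, uniformly in the middle tangle, to the standard closure — is exactly the diagrammatic reasoning those pictures encode. The only superfluous part is the reduction to generators $s_i$: since the isotopy moves only the closure arcs and never enters the braid box, the uniform argument you give first already suffices for arbitrary $\beta$.
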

Here are examples of the trace of a braid, the second one is a trefoil.

\vskip-1.5cm\[
\xygraph{!{/r1.1pc/: /u1.2pc/::}
[uu] ( ?*+!D\txt\tiny{+}  !{+U\vcap[7]} [r(7)]="a7" , !{\vtwistneg}
( !{\xcapv-@(0)} []*+!U\txt\tiny{+} !{+D\vcap[-7]} [r(7)] - "a7"
 , [r] !{\vtwistneg} ( 
   ?*+!U\txt\tiny{+} !{+D\vcap[-5]} [r(5)] !{\POS= "a5"}
   , [r]*+!U\txt\tiny{+} !{+D\vcap[-3]}  [rrr]="a3"
 ) , [urrr] (
     ?*+!D\txt\tiny{+} !{+U\vcap} [r] !{\POS= "a1"}
    , [l]*+!D\txt\tiny{+} !{+U\vcap[3]} [rrr]- "a3"
    , [ll]*+!D\txt\tiny{+} !{+U\vcap[5]} [r(5)]- "a5"
 , [l]!{\vtwistneg}
   [r]!{\xcapv@(0)} []*+!U\txt\tiny{+} !{+D\vcap[-]} [r]- "a1"
)))}
\qquad\qquad\qquad\qquad
\xygraph{ !{0;/r1pc/:/u1.25pc/::} [uu]
 ( !{\vtwist\vtwist\vtwist}[uuu] 
   (?*+!D\txt\tiny{+} !{+U\vcap[3]}[rrr]-[d(4.45)] 
   , [r]*+!D\txt\tiny{+} !{+U\vcap}[r]-[d(4.45)]  )
 , [ddd]*+!U\txt\tiny{+} !{+D\vcap[-3]} 
 , [dddr]*+!U\txt\tiny{+} !{+D\vcap[-1]}
 )}
\]
\vskip-35pt
\noindent
Each tensor-preserving (= strong monoidal) functor
\[
F : \TT   \longto \Vect
\]
will produce a complex number for each oriented link $\ell$ :
\[
0 \stackrel{\ell}{\longto}  0   \longmapsto \CC  \stackrel{F\ell}{\longto} \CC\,.
\]
Such $F$ are determined by a single vector space $V$ and a special kind of Yang--Baxter operator
\[
y : V \otimes V \cong V \otimes V,
\]
on $V$ :
\[
F (+ + - + -) = V \otimes V \otimes V^\ast \otimes V \otimes V^\ast.
\]
There are techniques for solving the Yang--Baxter equation to find such functors, and hence, invariants of links; 
see Turaev~\cite{Turaev1988}.

\subsection{Trace without duals}

A (\emph{right}) \emph{internal hom} $C^A$ for objects $A$ and $C$ of a monoidal category $\catC$ is an object equipped with a morphism (called ``evaluation'')
\[
\ev \ : A \otimes C^A \longto C
\]
which induces a bijection
\begin{align*}
\catC (B,C^A) &\cong \catC (A \otimes B, C)
\\
(B  \stackrel{f}{\longto} C^A) 
  &\longmapsto (A \otimes B \xylongto{1 \otimes f} 
  A \otimes C^A \stackrel{\ev}{\longto}   C)\,. 
\end{align*}
However, internal homs can exist without duals (e.g., in $\catC = \Vect $ and $ \Set $).

\goodbreak
\medskip\noindent
I know of no extension of the string notation, in keeping with the geometry as for duals, which covers internal homs.

\goodbreak
\[
\xygraph{ !{/r2.5pc/: /u3pc/::}
 [d] *+[Fo][o]{g} (
  ? - [ul]^(.9)*{A} , ? - [ur]_(.9)*{B} , ? - [d]^(.9)*{C}
 )} \quad = \quad  
 \xygraph{ !{/r2.85pc/: /u2pc/::}
 [d(1.5)] *+[Fo][o]{\ev} (
  ? - [uul]^(.9)*{A} 
  , ? - [ur(.9)]*+[Fo][o]{\hat{g}} _*{C^A}
     - [r(.5)u(1.1)]_(.9)*{B}
  , ? - [d(1.2)]^(.9)*{C}
 )} 
\]
From field theory in physics there are geometric situations where internal homs can exist without duals.   
This led Stolz--Teichner~\cite{StolzTeichner} to develop a notion of \emph{trace} using only the ``weak duals'' $I^A$.

The following treatment of ``contraction'' is adapted from these authors. 
A quadruple $(X,Y,A,B)$ of objects of a monoidal category $\catC$ is called \emph{approximable} when $Y^A$ exists and the following composite is injective.
\[
\xymatrix @C+8mm {
\catC (X,Y^A \otimes B) \ar[r]^-{A \otimes -} 
& \catC (A \otimes X, A \otimes Y^A \otimes B)  \ar[r]^-{\catC(1,\ev \otimes 1)} 
& \catC (A \otimes X, Y \otimes B)
}\]
A morphism $f : A \otimes X \longto Y \otimes B$ is called \emph{nuclear} when it is in the image of that composite.

\[
\xygraph{ !{/r2.85pc/: /u3pc/::}
 [d(1.5)] *+[Fo][o]{\ev} (
 ? - [l(.2)u(1.7)]^(.9)*{A} , ? - [l(.2)d(.8)]_(.9)*{Y} , 
 -  [u(.6)r] *+[Fo][o]{\hat{f}}_*{Y^A} (
  ? - [r(.2)d(1.4)]^(.9)*{B}, ? - [r(.3)u(1.1)]_(.9)*{X}
 ))} \quad = \quad  
 \xygraph{ !{/r2pc/: /u3pc/::}
 [d(1)] *+[Fo][o]{f} (
 ? - [ul]^(.9)*{A} , ? - [ur]_(.9)*{X} , ? - [dl]_(.9)*{Y} , ? - [dr]^(.9)*{B}
  )} 
\]

\begin{Example}
If $A$ has a right dual then any $(X,Y,A,B)$ is approximable. 
Indeed, the composite is invertible so every $f$ is nuclear.
\end{Example}

\noindent
The \emph{trace} (or \emph{contraction by} $A$) of 
\[
f : A \otimes X \longto Y \otimes A
\]
in a braided monoidal  $\catC$ is the composite
\[
\Tr (f) : X \stackrel{\hat f}{\longto} Y^A \otimes A \stackrel{\gamma}{\longto} A \otimes Y^A \stackrel{\ev}{\longto} Y\,.
\]

Let $\Tv$ be the category of topological complex vector spaces which are locally convex, complete and Hausdorff. The tensor product represents continuous bilinear maps.

\begin{Theorem} [Stolz--Teichner \cite{StolzTeichner}] 
Take $\catC = \Tv$.
\begin{enumerate}[\upshape 1.]
\item
 If $A$ has the approximation property (from analysis) then $( \CC,  \CC, A, A)$ is approximable.
\item
 A morphism $f:A\longto B$ is nuclear as in analysis if and only if $f:A \otimes \CC \to \CC \otimes B$ is nuclear.
\item
 If $A$ has the approximation property and $f:A\to A$ is nuclear then the trace $\Tr(f)$ is the classical trace.
\end{enumerate}
\end{Theorem}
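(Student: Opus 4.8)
The plan is to translate every categorical notion appearing in the statement into its functional-analytic counterpart and then to recognize the three assertions as a repackaging of Grothendieck's theory of nuclear maps and the approximation property. The first step is to fix the dictionary. In $\Tv$ the tensor product $\otimes$ is the completed projective tensor product $\mathbin{\hat\otimes}_\pi$, so that $\catC(A\otimes B, C)$ is the space of continuous bilinear maps $A\times B\to C$; its symmetry $\gamma$ is the canonical flip. Consequently the internal hom $\CC^A$ is the continuous dual of $A$ (with its natural topology), the evaluation $\ev:A\otimes\CC^A\to\CC$ is $a\otimes\varphi\mapsto\varphi(a)$, and picking out a global element gives $\catC(\CC,\CC^A\otimes B)\cong \CC^A\mathbin{\hat\otimes}_\pi B$ while $\catC(A\otimes\CC,\CC\otimes B)\cong\catC(A,B)$.

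With this dictionary the key computation is to identify the ``approximable'' composite. Starting from $u=\sum_i\varphi_i\otimes b_i\in \CC^A\mathbin{\hat\otimes}_\pi B\cong\catC(\CC,\CC^A\otimes B)$, applying $A\otimes-$ produces $a\mapsto a\otimes u$ and then $\ev\otimes 1$ produces the operator $a\mapsto\sum_i\varphi_i(a)\,b_i$. Hence the composite is exactly Grothendieck's canonical map $\CC^A\mathbin{\hat\otimes}_\pi B\to\catC(A,B)$, $\varphi\otimes b\mapsto(a\mapsto\varphi(a)b)$. Part~2 is then immediate: by definition the nuclear operators $A\to B$ are precisely the image of this canonical map, so ``nuclear in analysis'' and ``in the image of the composite'', i.e. categorically nuclear, coincide (no approximation hypothesis is needed, matching the statement). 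For Part~1, specializing to $B=A$, the assertion that $(\CC,\CC,A,A)$ is approximable is the injectivity of $\CC^A\mathbin{\hat\otimes}_\pi A\to\catC(A,A)$, which is exactly Grothendieck's characterization of the approximation property for $A$.

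For Part~3, let $f:A\to A$ be nuclear. By Part~1 the approximation property makes the composite injective, so $f$ has a unique preimage $\hat f\in\CC^A\mathbin{\hat\otimes}_\pi A$; write $\hat f=\sum_i\varphi_i\otimes a_i$. The categorical trace is $\Tr(f)=\ev\circ\gamma\circ\hat f$: the flip $\gamma$ sends $\sum_i\varphi_i\otimes a_i$ to $\sum_i a_i\otimes\varphi_i$ and evaluation then yields the scalar $\sum_i\varphi_i(a_i)$. This is precisely the absolutely convergent series defining the classical (Grothendieck) trace of a nuclear operator, and the approximation property is exactly what guarantees that this value is independent of the chosen representation --- equivalently, that $\hat f$ is well defined --- so the categorical trace agrees with the classical one.

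The formal categorical manipulations above are routine once the dictionary is set; the real content, and the main obstacle, is the functional-analytic input. One must verify that the monoidal structure of $\Tv$ really is the completed projective tensor product with the correct dual topologies, so that the image of the canonical map is exactly the nuclear operators and its kernel measures exactly the failure of the approximation property; this is where Grothendieck's theorem (approximation property $\iff$ injectivity of $\CC^A\mathbin{\hat\otimes}_\pi A\to\catC(A,A)$) and the completeness and Hausdorff hypotheses built into $\Tv$ are essential. Care with the choice of topology on the duals is the one genuinely delicate point, and it is what makes the classical trace well defined precisely under the stated hypotheses.
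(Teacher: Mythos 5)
First, a point about comparison: the paper gives no proof of this theorem at all --- it is quoted as a result of Stolz--Teichner \cite{StolzTeichner}, with the surrounding text only adapting their definitions --- so your argument has to be judged on its own terms. Much of your translation is right in spirit: on elementary tensors the composite in the definition of approximability is Grothendieck's canonical map $\varphi\otimes b\mapsto(a\mapsto\varphi(a)b)$; Part 1 should indeed rest on Grothendieck's characterization of the approximation property as injectivity of $A'\mathbin{\hat\otimes}_\pi A\to\mathcal{L}(A,A)$; and your mechanism for Part 3 (injectivity makes $\hat f$ unique, so $\Tr(f)=\ev\circ\gamma\circ\hat f$ is well defined and evaluates to $\sum_i\varphi_i(a_i)$) is the correct one.

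The genuine gap is the very first entry of your dictionary: for a general object $A$ of $\Tv$ the internal hom $\CC^A$ does \emph{not} exist, under any choice of topology on the continuous dual $A'$. The adjunction $\Tv(B,\CC^A)\cong\Tv(A\otimes B,\CC)$ would identify continuous linear maps $B\to\CC^A$ with jointly continuous bilinear forms $A\times B\to\CC$, and a bilinear form is jointly continuous precisely when its associated linear map $B\to A'$ carries some $0$-neighbourhood of $B$ into an equicontinuous set $U^{\circ}$; that is a condition on the equicontinuous bornology of $A'$, not continuity for any fixed topology. Concretely: testing the adjunction with $B=\CC^A$ itself shows that some $0$-neighbourhood $V$ of $\CC^A$ lies inside some $U^{\circ}$; testing it with the Banach spaces spanned by equicontinuous disks $W^{\circ}$ shows every $W^{\circ}$ is bounded in $\CC^A$, hence $W^{\circ}\subseteq\lambda V\subseteq\lambda U^{\circ}$ and so $U\subseteq\lambda W$ by the bipolar theorem; thus $U$ is a bounded $0$-neighbourhood and $A$ is normable. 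So your dictionary covers only Banach $A$, and it excludes exactly the motivating examples (non-normable nuclear Fr\'echet spaces, all of which have the approximation property). This is not a detail one can delegate to the analysts at the end: it is the reason Stolz--Teichner formulate nuclearity and traces via factorizations (``thick'' morphisms $X\cong I\otimes X\to Y\otimes Z\otimes X\to Y$) rather than via internal homs, and bridging that formalism with the $Y^A$-based statement is where the real work lies.

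A second gap sits in Part 2: it is not ``by definition''. The description of nuclear operators as the image of $A'\mathbin{\hat\otimes}_\pi B\to\mathcal{L}(A,B)$ is correct (even definitional) in the Banach setting, but for general objects of $\Tv$ nuclearity in analysis means a representation $f=\sum_i\lambda_i\,\varphi_i(\cdot)\,b_i$ with $(\lambda_i)\in\ell^1$, $\{\varphi_i\}$ equicontinuous and $\{b_i\}$ contained in a bounded Banach disk --- equivalently, factorization through a nuclear map of Banach spaces --- and matching this class with the image of the categorical composite is the substance of the theorem being cited. Relatedly, you repeatedly write elements of completed projective tensor products as series $\sum_i\varphi_i\otimes b_i$; such representations are available for Fr\'echet spaces but not for arbitrary objects of $\Tv$, so Parts 2 and 3 need the factorization argument even to make your formulas meaningful.
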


\section{Monoidal categories, Hall algebras and representation theory\label{Lecture2}}

This lecture is dedicated to my colleague and friend Brian Day.

\noindent
We shall discuss the following three processes.
\[
\xygraph{ !{\UseTips{}}!{/u4pc/::}
 []*++[F-]+!C\txt{small rich category $\catA$}
  :@{|->}^{\objectstyle(\;)_{\mathrm{inv}}}
 [d]*++[F-]+!C\txt{small promonoidal category $\catA_{\inv}$}
  :@{|->}^{\objectstyle\Rep}
 [d]*++[F-]+!C\txt{monoidal category $\Rep\catA_{\inv}$}
  :@{|->}^{\objectstyle K_0}
 [d]*++[F-]+!C\txt{algebra $K_0\Rep\catA_{\inv}$}
}\]

\subsection{Preliminary remarks}

Certainly the representations of a given abstract group form a category.  
However, many concrete groups naturally occur in families: symmetric groups $\LieS_n$, $n \in \NN$; 
braid groups $\BB_n$,  for $n \in \NN$; general linear groups $\GL_{\FF}(n)$ over a fixed field $\FF$; and so on.  
Such families arise from a category $\catA$ as the automorphism groups $\Aut_{\catA}(A)$, $A \in \ob \catA$.  
More categorically, we consider the groupoid $\catA_{\inv}$ with the same objects as $\catA$ but only the invertible morphisms.

A goal of this lecture is to show how studying the category of representations of $\catA_{\inv}$ can enhance the study of representations of the individual groups $\Aut_{\catA}(A) = \catA_{\inv}(A,A)$.

For example, take $\catA$ to be the category of finite sets and functions between them.  Let $\catSS=\catA_{\inv}$  denote the category of finite sets and bijective functions between them.   This groupoid is monoidal with disjoint union (sum $+$) as tensor product.  Mac Lane~\cite{ML1963} used (a skeleton of) this category to express the coherence theorem for symmetric monoidal categories.

As a groupoid $\catSS$ is equivalent to the disjoint union of the symmetric groups
\[
\LieS_n \cong \catSS (\langle n \rangle , \langle n \rangle) \text{ where } \langle n \rangle = \{1,2,\ldots,n\}\,.
\]
However, the monoidal structure includes the important functions
\[
+ : \LieS_m \times \LieS_n  \longto \LieS_{m+n}\,.
\]

As with any category, the diagonal functor $\delta : \catSS \to \catSS \times \catSS$ is a comonoidal structure.   
In this case, $\delta$ preserves the monoidal structure.

In passing from $\catA$ to $\catA_{\inv}$, many of the good properties $\catA$ may have are lost.    
For example, while $+$ is the coproduct in finite sets and functions, it is not the coproduct in $\catSS$.    
In fact, $\catSS$ has little in the way of limits or colimits, or internal homs.

The work of Day~\cite{DayConv} shows how to complete a monoidal category 
(and, more generally, a promonoidal category) to have both colimits and internal homs.

\subsection{Promonoidal categories\label{Promon}}

A \emph{promonoidal structure} on a category $\catC$ consists of functors
\begin{align*}
P : \catC^{\op} \times \catC^{\op} \times \catC &\longto \Set 
\\
J : \catC &\longto \Set 
\end{align*}
and natural isomorphisms
\begin{align*}
&\alpha : \int^X P (X,C\,;\,D) \times P(A,B\,;\,X) \cong \int^Y P(A,Y\,;\,D) \times P (B,C\,;\,Y)
\\
&\lambda : \int^X P(X,A\,;\,B) \times JX \cong \catC (A,B)
\\
&\rho : \int^X P (A,X\,;\,B) \times JX \cong \catC(A,B)
\end{align*}
satisfying two coherence conditions.

\begin{example}\label{ProEx1}%
For any category $\catC$, put 
\begin{align*}
P (A,B\,;\,C) &= \catC (A,C) \times \catC (B,C)
 \\
JA &= 1
\end{align*}
(which corresponds to the comonoidal structure $\delta : \catC \to \catC \times \catC$ on $\catC$).
\end{example}%

\begin{example}\label{ProEx2}%
For any monoidal category $\catC$, put
\begin{align*}
P (A,B\,;\,C) &= \catC (A \otimes B,C)
\\
JA &= \catC (I,A)\,.
\end{align*}
\end{example}%

\begin{example}\label{ProEx3}
\fbox{$\catC : =  \Aut \catG$} \quad
For any groupoid $\catG$, take $\catC$ to have objects $(A,s)$ 
where $s : A \to A$ in $\catG$ and morphisms $f : (A,s) \to (B,t)$ the morphisms $ f : A \to B$ in $\catG$ satisfying $f s = t f$.    
Put
\[
P\bigl((A,s),(B,t)\,;\,(C,r)\bigr) = 
 \bigl\{ A \stackrel{u}{\longto} C \stackrel{v}{\longfrom} B \text{ in } \catG \bigm|  \conj{u}{s} \,\conj{v}{t} = r \bigr\}
\]
where $\conj{u}{s} = u s u^{-1}$, and
\[
J (A,s) = \begin{cases}
               1 &\text{when } s = 1_A \\
\emptyset &\text{otherwise}. \end{cases}
\]
\end{example}%

\begin{example}\label{ProEx4}%
Let $\catA$ be an abelian category.   
Take $\catC$ to have the same objects as $\catA$ but only the invertible morphisms. 
Put
\[
P(A,B\,;\,C) = \bigl\{(f,g) \bigm |  0 \to A \stackrel{f}{\to} C \stackrel{g}{\to} B \to 0
\text{ is a short exact sequence in } \catA  \bigr\}
\]
and
\[
JA = \begin{cases}
               1 & \text{for } A = 0 \\
\emptyset &\text{otherwise}.
\end{cases}
\]
The natural isomorphism $\alpha$ is defined by
\[
\xymatrix @C+5mm {
[(u,v),(f,g)] \ar @{<->} [r] |-*@{|} & [(f',g'),(u',v')] }
\]
as in the following $3 \times 3$ diagram of short exact sequences in $\catA$. 
\[
\xymatrix{
& 0 \ar[d] & 0 \ar[d] & 0 \ar[d] & \\
0 \ar[r] & A \ar[d]_-{f} \ar[r]^-{1} & A \ar[d]_-{f'} \ar[r] & 0 \ar[d] \ar[r] & 0 \\
0 \ar[r] & X \ar[d]_-{g} \ar[r]^-{u} & D \ar[d]_-{g'} \ar[r]^-{v} & C \ar[d]^-{1} \ar[r] & 0 \\
0 \ar[r] & B \ar[d] \ar[r]_-{u'} & Y \ar[d] \ar[r]_-{v'} & C \ar[d] \ar[r] & 0 \\
& 0 & 0 & 0 &
}
\]
\end{example}%
\goodbreak
\goodbreak

\subsection{Day convolution}

Let $\catC$  be a promonoidal (small) category and let $\catV$ be a symmetric monoidal closed complete and cocomplete category.   Then let $[\,\catC, \catV\,]$ denote the category of functors $M : \catC \longto \catV$ and natural transformations.

For a set $\Lambda$ and object $V$ of $\catV$, we write $\Lambda \cdot V$ for the coproduct of $\Lambda $ copies of~$V$.   
We write $[V,W]$ for the left ($=$ right) internal hom of $\catV$.

The \emph{convolution tensor product} on $[\,\catC,\catV\,]$ is defined by
\[
\mathop{(M \ast N)} C = \int^{A,B} \ P(A,B\,;\,C) \cdot MA \otimes NB
\]
with unit
\[
JC = JC \cdot I\,.
\]
Both left $\laction{N}{L}$ and right $L^M$ internal homs exist:
\begin{align*}
\mathop{(\laction{N}{L})} A &= \int_{B,C} [P(A,B\,;\,C) \cdot NB,LC\,]
\\
\mathop{(L^M)} B &= \int_{A,C} [P(A,B\,;\,C) \cdot MA,LC\,]
\end{align*}
When $\catV = \Set $, convolution is just the left Kan extension.
\[
\xymatrix{\ar @{} [drr] |(.45)*{\stackrel{\cong}{\Longrightarrow}}  
\catC^{\op} \times \catC^{\op} \ar[rd]_-{\overline{P}}\ar[rr]^-{y\times y}  
  && [\,\catC,  \Set ] \times [\,\catC, \Set ] \ar[ld]^-{-\ast-} \\
& [\,\catC, \Set ]  &
}\]

\subsection{Hadamard product}

For any category $\catC$, the comonoidal example gives the convolution structure on $[\,\catC,\catV\,]$ as
\begin{align*}
\mathop{(M \underline{\times} N)} C 
 &= \int^{A,B} \bigl(\catC (A,C) \times \catC(B,C)\bigr) \cdot MA \otimes NB
\\
&\cong MC \otimes NC\,,
\end{align*}
the Hadamard (or pointwise) tensor  product.

\subsection{Cauchy product}

For any monoidal category $\catC$, Example~\ref{ProEx2} gives the convolution structure on $[\,\catC,\catV\,]$ as
\[
\mathop{(M \bullet N)}C = \int^{A,B} \catC (A\otimes B,C) \cdot MA \otimes NB
\]
which might be called the Cauchy product.

For the case $\catC=\catSS$, we have the simplification
\[
\mathop{(M \bullet N)} C \cong \sum_{S \subseteq C}  MS \otimes N (C \backslash S)\,,
\]
where $C \backslash S$ is the complement of the subset $S$ of $C$ in $\catC$.

\subsection{Species and the substitution operation}

A functor $M : \catSS \to  \Set $ is a \emph{species} in the sense of Joyal.   
A functor  $M : \catSS \to  \Vect $ is a \emph{vector species}.    
On species, apart from the Hadamard and Cauchy products, there is a substitution product.
\[
\mathop{(M \circ N)} C = \int^A MA \otimes N^{\bullet \# A}   C
\]
where $N^{\bullet n} = \underbrace{N \bullet \ldots \bullet N}_n$  is the $n$-fold Cauchy product.    
This is a monoidal structure on $[\catSS,\catV\,]$ but does not come by convolution 
(since $M  \circ -$ does not preserve colimits).

\subsection{Symmetric group representations}

$\Rep\catSS$ denotes the full subcategory of $[\catSS, \Vect ]$ consisting of those $M$ satisfying $\dim MA < \infty$ 
and $= 0$ for $\# A$ large enough.

$\Rep\catSS$ is equivalent to the weak product of the usual categories $\Rep\LieS_n$ of representations of the permutation groups $\LieS_n$ with $n \geq 0$.    
The irreducible representations of the individual $\LieS_n$ give all the irreducible objects in $\Rep\catSS$ up to isomorphism.

A {\em class function} on $\catSS$ is a functor $f : \Aut \catSS \to \CC$ (where $\CC$ is discrete).    
The Grothendieck ring 
\[
\ringK_0 \Rep\catSS
\] 
is isomorphic to the ring of class functions where multiplication is
\[
(f \cdot g) (A, \sigma) = \sum_{S \subseteq A \atop \sigma S = S} 
  f(S, \sigma \upharpoonright_S) \times g(A\backslash S, \sigma \upharpoonright_{A\backslash S})\,.
\]
This ring is isomorphic to \emph{the ring of symmetric functions}.

\subsection{Finite general linear group representations} 

Let $\FF_q$ be a field with $q$ elements
and let $\catA_q$ be the abelian category of finite vector spaces over $\FF_q$ and linear functions between them.   
Now let $\catGL_q$ denote the groupoid with the same objects as $\catA_q$ but with only the linear bijections.

We are in the situation of Example~\ref{ProEx4}.   
There is a promonoidal structure defined on $\catGL_q$ by 
\[
P(A,B\,;\,C) = \{A \stackrel{f}{\rightarrowtail}  C  \stackrel{g}{\twoheadrightarrow} B \text{ s.e.s. in } \catA_q\}\,.
\]
Hence there is a convolution monoidal structure on the category
\[
[\catGL_q, \Vect]
\]
defined by
\begin{align*}
\mathop{(M \bullet N)} C &= \int^{A,B} P(A,B\,;\,C) \cdot MA \otimes NB
\\
&\cong \bigoplus_{V \leq C} MV \otimes N(C/V)
\end{align*}
where $V$ runs over the linear subspaces of $C$ in $\catA_q$.  
To understand the isomorphism, take $(f,g) \in P (A,B\,;\,C)$ and put $V = fA \leq C$.    
Then we have
\[
\xymatrix{
A \  \ar[d]_\sigma^\cong  \ar@{>->}[r] &C \ar[d]_1 \ar@{>>}[r] &    B  \ar[d]_\cong^\tau \\
 V \ar@{^{(}->}[r] &  C \ar[r] &     C/V 
 }\]
If $x \in  MA$ and $y \in NB$, the isomorphism takes $[(f,g),x\otimes y]$ to $(M\sigma) x \otimes (N\tau) y$.

\medskip

There is a classical way of assigning a representation $M \bullet N$ of $\GL_q (m+n)$ to a representation $M$ of $\GL_q(m)$ and a representation $N$ of $\GL_q(n)$.   Let $P_{m,n}$ denote the ``parabolic'' subgroup of $\GL_q(m+n)$ consisting of those bijective linear $\rho : \FF_q^{m+n} \to \FF_q^{m+n}$ which take $\FF^m_q$ into itself.    By taking diagonal blocks, we obtain a surjective group morphism $P_{m,n} \to \GL_q (m) \times \GL_q (n)$.

Then $M \bullet N$ is defined as the left Kan extension
\[
\xymatrix{
P_{m,n}  \  \ar@{>>}[d]  \ar@{^{(}->}[r]  \ar@{}[rd]|*{\Longrightarrow}
&   \GL_q (m+n)  \ar@{-->}[d]^{M \bullet N}  & \\
 \GL_q (m) \times \GL_q (n)  \ \ar[r]_-{M \underline{\times} N} &   \Vect\,;
 }
 \]
that is, $M \bullet N$ is induced by the restriction of $M \  \underline{\times} \  N$ to $P_{m,n}$.   
This agrees with the convolution structure on $[\catGL_q, \Vect]$ for these special objects $M$ and $N$. 

Green~\cite{Green1955} defined a ring structure on the class functions $f: \Aut\catGL_q \to \CC$ where
\[
(f \bullet g) (C,\sigma) = \sum_{V \leq C \atop \sigma V = V} f(V,\sigma \upharpoonright_V) \times g (C/V,\sigma/V)\,.
\]
It is easy to see that this ring is commutative.

Write $\Rep\catGL_q$ for the full subcategory of $[\catGL_q,\Vect]$ consisting of the $M$ whose values are finite dimensional and have $MA = 0$ for $A$ of  large enough dimension. This is a categorical version of Green's ring of class functions.

\medskip

The question arises as to how commutativity of the ring of class functions is reflected at the categorical level of $\Rep\catGL_q$.
\begin{Theorem}[Joyal--Street \cite{GLFq}]
$\Rep\catGL_q$ with monoidal structure $M\bullet N$ is braided.
\end{Theorem}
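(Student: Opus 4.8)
The plan is to produce the braiding at the level of the promonoidal data and then transport it through Day convolution. By the theory of \cite{DayConv}, extended to the braided setting, the tensor product $M \bullet N$ on $[\catGL_q,\Vect]$ is the convolution attached to the promonoidal structure $P(A,B\,;\,C)$, and a braiding on such a convolution is induced by a braided structure on $\catGL_q$ itself. Since the coefficients lie in $\Vect$ over $\CC$, this braided structure need not be a bijection of the underlying sets $P(A,B\,;\,C)$; it suffices to give a natural family of \emph{$\CC$-linear} isomorphisms
\[
\sigma_{A,B;C} : \CC\,[P(A,B\,;\,C)] \stackrel{\cong}{\longrightarrow} \CC\,[P(B,A\,;\,C)]
\]
satisfying the two hexagon identities displayed in the definition of braiding. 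Granting $\sigma$, the braiding $\gamma_{M,N}$ is obtained inside the coend defining $M\bullet N$ by composing the symmetry $MA\otimes NB\cong NB\otimes MA$ of $\Vect$ with $\sigma$ and then renaming the bound variables $A\leftrightarrow B$, which carries $(M\bullet N)C$ to $(N\bullet M)C$.

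The first key step is the construction of $\sigma$, and here one must exploit that $\catA_q$ consists of vector spaces, so that every short exact sequence splits and the Hall numbers are symmetric: $|P(A,B\,;\,C)| = |P(B,A\,;\,C)| = \binom{n}{\dim A}_q\,|\Aut A|\,|\Aut B|$ whenever $\dim A + \dim B = \dim C = n$, using $\binom{n}{\dim A}_q = \binom{n}{\dim B}_q$. Given a sequence $\xi = (0 \to A \stackrel{f}{\to} C \stackrel{g}{\to} B \to 0)$ with image $V = fA \leq C$, I would define $\sigma(\xi)$ as the $q$-weighted sum, over all complements $W$ of $V$ in $C$, of the reversed sequences $0 \to B \cong W \hookrightarrow C \twoheadrightarrow C/W \cong A \to 0$, where the isomorphisms $B \cong W$ and $A \cong C/W$ are the canonical ones induced by $g|_W$ and by $f$. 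The normalizing power of $q$ (governed by $q^{\dim V\cdot\dim B}$, the number of such complements) is precisely what turns $\sigma$ from a potential symmetry into a genuine braiding: because $\sigma$ mixes the basis of short exact sequences rather than permuting it, the full twist $\sigma_{B,A;C}\circ\sigma_{A,B;C}$ is nontrivial. Naturality in $A$, $B$, $C$ is immediate, as the whole recipe is assembled from $\GL$-equivariant linear-algebra data.

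The main obstacle is verifying invertibility of $\sigma$ together with the two hexagon axioms. Invertibility should follow once the matrix of $\sigma$ in the basis of short exact sequences is exhibited as triangular with respect to the relative-position (Bruhat-type) order on the pairs $(V,W)$, with an invertible $q$-power diagonal, so that invertibility of $q$ in $\CC$ supplies the inverse. The hexagons reduce, via the associativity isomorphism $\alpha$ of the promonoidal structure --- the one encoded by the $3\times 3$ diagram of Example~\ref{ProEx4} --- to a single counting identity comparing the two ways of moving $A$ past $B$ and then past $C$ inside a fixed $C$; concretely this becomes an identity among Hall numbers and Gaussian binomial coefficients for configurations of three subspaces of $C$, and the $q$-weights must match exactly. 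This bookkeeping with the $3\times 3$ lemma and standard $q$-binomial identities is the technical heart of the argument. As a consistency check on the normalization, applying $\ringK_0$ should decategorify $\gamma$ to the identity on class functions and thereby recover Green's commutative multiplication, confirming that the chosen $q$-weights are the correct ones.
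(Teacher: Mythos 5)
Your overall framework is the paper's: you correctly realize that the braiding should be given not by bijections of the sets $P(A,B\,;\,C)$ but by a natural family of $\CC$-linear maps $\CC[P(A,B\,;\,C)]\to\CC[P(B,A\,;\,C)]$, and your candidate map --- send a short exact sequence with image $V\leq C$ to the sum of the reversed sequences indexed by the complements $W$ of $V$ --- is exactly the paper's matrix, whose entry $\gamma^{}_{(f,g),(h,k)}$ is $1$ when $f,g,h,k$ form a direct sum and $0$ otherwise. Your inserted $q$-weights are a red herring, however: the unweighted matrix is already a lax braiding (for \emph{any} abelian category $\catA$, via the $3\times 3$ lemma), and its failure to be a symmetry is an output of the counting, not of any normalization one puts in; no weights are needed, and the hexagons are not the technical heart you make them out to be.

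The genuine gap is invertibility, and the mechanism you propose for it would fail. The matrix is supported exactly on the transverse pairs $(V,W)$ with $V\oplus W=C$, i.e.\ on the single generic stratum of relative positions; that is not a triangular pattern. In the smallest case ($A=B=\FF_q$, $C=\FF_q^2$), at the level of subspaces the matrix is $J-I$ on the $q+1$ lines of the plane (all-ones minus identity): invertible, but with \emph{zero} diagonal and full off-diagonal support, so no ordering makes it triangular with invertible $q$-power diagonal. More importantly, invertibility is not a soft combinatorial consequence of splitness plus symmetry of Hall numbers: the paper emphasizes that for general abelian $\catA$ these matrices are \emph{not} expected to be invertible, and its proof for $\catA_q$ runs through representation theory. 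One reduces by naturality of $\gamma^{}_{M,N}$ to $M,N$ irreducible, then, using the hexagon conditions for $\gamma^{}_{M\bullet N,L}$ and $\gamma^{}_{M,N\bullet L}$, to $M,N$ cuspidal (irreducible and $\bullet$-indecomposable); for cuspidals supported in dimensions $r$ and $s$, a detailed study of iterated tensor products in~\cite{GLFq} yields the Hecke-type relations
\[
\gamma^{}_{N,M}\circ\gamma^{}_{M,N}=q^{rs}\,1_{M\bullet N}\ \ (M\not\cong N),
\qquad
\gamma^{}_{M,M}\circ\gamma^{}_{M,M}=q^{r(r-1)/2}(q^{r}-1)\,\gamma^{}_{M,M}+q^{r^2}\,1_{M\bullet M},
\]
whose constant terms are nonzero powers of $q$; only from these does invertibility follow. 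Your proposal contains neither the reduction to cuspidals nor any substitute for these relations, so the step ``invertibility should follow'' is precisely where the proof is missing.
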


\noindent
I will explain this. A \emph{lax braiding} for a promonoidal category $\catC$ is a natural family
\[
\gamma^{}_{A,B\,;\,C} : P(A,B\,;\,C) \longto P(B,A\,;\,C)
\]
such that the natural family
\[
\gamma^{}_{M,N} : M \ast N \longto N \ast M
\]
defined as the composite
\[ \xymatrix @C+5mm{
\int^{A,B} P(A,B\,;\,-)  \cdot   MA \times NB    \ar[r]^{\int \gamma \cdot  1 \times 1}
 & \int^{A,B} P(B,A\,;\,-)  \cdot MA \times NB   \ar[d]_\cong^{\int 1 \cdot \gamma}
\\
& \int^{A,B} P(B,A\,;\,-)  \cdot NB \times MA
}\]
satisfies the conditions for a braiding on $[\,\catC, \Set ]$ excluding invertibility.    
If each $\gamma^{}_{A,B\,;\,C}$ is invertible it is called a \emph{braiding} on the promonoidal category $\catC$.    
In fact, there is a bijection between (lax) braidings on $\catC$ and (lax) braidings on $[\,\catC,\Set ]$.

Yet, it may happen that $[\,\catC,\catV\,]$ is (lax) braided without $\catC$ (lax) braided.

\noindent
The functor
\[
-  \cdot  I : \Set  \longto \catV
\]
takes cartesian product to tensor product in $\catV$. 
Using this, we see that (lax) braidings for $[\,\catC,\catV\,]$ come from natural families
\[
\gamma^{}_{A,B\,;\,C} : P(A,B\,;\,C)   \cdot I \longto
P (B,A\,;\,C) \cdot  I.
\]
When $\catC$ arises from an abelian category $\catA$ as in Example~\ref{ProEx4}, 
there is a lax braiding on $[\,\catC, \Vect]$ coming from the linear function
\[
\gamma : P(A,B\,;\,C) \cdot  \CC   \longto  P(B,A\,;\,C) \cdot  \CC 
\]
represented by the matrix
\[
\gamma : P (A,B\,;\,C) \times P(B,A\,;\,C) \longto \CC 
\]
defined by
\[
\gamma^{}_{(f,g),(h,k)}   = \begin{cases}
1 & \text{when } \ f,g,h,k \text{ form a direct sum (as below)}
\\
0 & \text{otherwise}
\end{cases}
\]

\paragraph*{\it Direct sum conditions\/\upshape:}
$ \xymatrix @C+5mm{
 A \ar @<3pt> [r]^f  
 & C \ar @<3pt> [r]^g \ar @<3pt>[l]^k
 & B \ar @<3pt> [l]^h
}$
such that $ k \circ f = 1_A$, $g \circ h = 1_B$ and  $f \circ k + h \circ g = 1_C$.

\medskip\noindent
The matrices $\gamma$ can be huge with many zero entries.    
For general $\catA$ we do not expect the matrices to be invertible.    
As a $\Vect$-promonoidal category, $\catC$ is only lax braided in general.

\medskip
Assume each object of $\catA$ has only finitely many subobjects (as in the case of~$\catA_q$).   
Then convolution on $[\,\catC,\Vect]$ is defined by
\[
\mathop{(M \bullet N)}C = \bigoplus_{V \leq C} MV \otimes N(C/V)\,.
\]
The lax braiding for convolution induced by that on $\catC$ is
\[
\xymatrix{\mathop{(M \bullet N)} C   \ar[d]^{\cong}   \ar[r]^-{\gamma^{}_{M,N} C} & (N \bullet M)C  \ar[d]^{\cong} \\
 \bigoplus\limits_{U\leq C} MU \otimes N (C/U)  \ar[r]^-\theta & \bigoplus\limits_{V\leq C} M(C/V)\otimes NV}
\]
where the matrix $\theta$ has entries
\[
\theta_{U,V} = \begin{cases}
M(r_{U,V})\otimes N(s_{U,V}) & \text{when } C= U \oplus V \\
      0 & \mbox{otherwise}.
\end{cases}
\]
Here an (internal) direct sum $C= U \oplus V$ induces canonical isomorphisms
\[
r_{U,V} : U \cong C/V \quad \mbox{and} \quad s_{U,V} : C/U \cong V\,.
\]
Ringel~\cite{Ringel1990} defines, for an $\FF_q$-linear abelian category $\catA$ of finite global dimension,
\begin{itemize}
\item 
the \emph{multiplicative Euler form}
\[
\langle A,B \rangle^2_m = \frac{\catA(A,B) \ \Ext^2 (A,B) \ \Ext^4 (A,B)\ \ldots}{\Ext^1(A,B) \ \Ext^3(A,B) \ \Ext^5(A,B) \ \ldots}\,;
\]
\item 
the vector space $\Hall_{\catA}$ of functions
$ f : (\ob \catA)/_{\cong} \longto \CC $ of finite support;
\item 
the product
\[
(f  \bullet g) [C\,]
 = \sum_{V \leq C} \langle V,C/V \rangle_m f(V) \cdot g (C/V)\,.
\]
\end{itemize}

Ringel proves this is an associative algebra. 
Green~\cite{Green1995} defines a comultiplication on $\Hall_\catA$ but has to modify (topologically) $\otimes$ on $\Vect$. 
Xiao~\cite{Xiao1997} produces an antipode for $\Hall _\catA$. For further details see Schiffmann's lectures~\cite{Schiff}.

There are modified versions of $\Hall_\catA$ for $\catA$ a triangulated category; 
the distinguished triangles are used in place of exact sequences to define $P(A,B\,;\,C)$.

It would be interesting to study the connection between the Drinfeld double of Xiao's Hall Hopf algebra and the monoidal centre of $[\,\catC,\Vect ]$.  
(There will be more about monoidal centres in Sections~\ref{Lecture3} and~\ref{Lecture4}).  

\bigskip
We return to the case of $\catA_q$ and the braiding  on $\Rep \catGL_q$.  
To prove the invertibility of the braiding we must delve more deeply into the representation theory of the groups $\GL_q (n)$.   
We aim at emphasising categorical  features.     

\medskip\noindent
\emph{Irreducible representations} are those which are indecomposable under direct sum.

\medskip\noindent
\emph{Cuspidal representations} are those which are indecomposable under the tensor product $M \bullet N$. 

\medskip\noindent
More precisely, define $M\in\Rep\catGL_q$ to be \emph{trivial} when $MA=0$ for all $A$ with $\dim A > 0$.   
Define $M\in \Rep \catGL_q$ to be  $\bullet$-\emph{indecomposable} when 
\[
M \cong N \bullet L \;  \Rightarrow \;  N  \text{ or }  L \text{ trivial}.
\]
Define $M$ to be \emph{cuspidal} when it is both irreducible and $\bullet$-indecomposable.  
This agrees with the definition due to  Harish-Chandra (in terms of unipotent radicals) since an irreducible in $\Rep \catGL_q$  is isomorphic to a representation of a single $\GL_q(n)$.   
By naturality of $\gamma^{}_{M,N}$, it suffices to prove invertibility for $M$ and $N$ irreducible.    
Using the braid conditions for $\gamma^{}_{M \bullet N,L}$ and $\gamma^{}_{M,N\bullet L}$, we see that it suffices to prove invertibility for $M$ and $N$ cuspidal.

\medskip
After detailed study of iterated tensor products
\[
(M_1\bullet \ldots \bullet M_n) C \;\; = \bigoplus_{0\leq V_1 \leq \ldots V_{n-1} \leq C} M_1V_1 \otimes M_2 (V_2/V_1) \otimes \ldots \otimes M_n (C/V_{n-1})
\]
we obtain:
\begin{Theorem} [Joyal--Street \cite{GLFq}] 
Suppose $M$ and $N$ are cuspidal in $\Rep \catGL_q$ with $MA\neq 0$ and $NB \neq 0$ for $\dim A = r$ and $\dim B = s$.    

\goodbreak
\begin{enumerate}[\upshape(i)]
\item
 If $M \not\cong N$ then \ 
$ \gamma^{}_{N,M} \circ \gamma^{}_{M,N} = q^{rs} \ 1_{M\bullet N}\,$.
\item
  If $M = N$ then \ 
$\gamma^{}_{M,M} \circ \gamma^{}_{M,M} = q^{r(r-1)/2}  (q^r-1) \ \gamma^{}_{M,M} + q^{r^2}\ 1_{M\bullet M}\,$.
\end{enumerate}
\end{Theorem}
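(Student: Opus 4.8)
The plan is to fix an object $C$ with $\dim C=r+s$ and to compute the endomorphism $\gamma_{N,M}\circ\gamma_{M,N}$ of $(M\bullet N)C$ blockwise against the vector-space decomposition $(M\bullet N)C=\bigoplus_{U\le C}MU\otimes N(C/U)$; cuspidality guarantees that only the $r$-dimensional $U$ contribute and that $M\bullet N$ is concentrated in dimension $r+s$. First I would record the matrix of $\gamma_{M,N}$ read off from the displayed $\theta$: its block from the $U$-summand to the $V$-summand is nonzero exactly when $C=U\oplus V$, where it is $M(r_{U,V})\otimes N(s_{U,V})$ followed by the symmetry of $\Vect$. Composing with the analogous matrix for $\gamma_{N,M}$ gives the block of the double braiding from $U$ to $W$ as a sum over common complements,
\[
\Theta_{U,W}=\sum_{\substack{C=U\oplus V\\ C=V\oplus W}} M(s_{V,W}\circ r_{U,V})\otimes N(r_{V,W}\circ s_{U,V}),
\]
where $s_{V,W}\circ r_{U,V}\colon U\to W$ and $r_{V,W}\circ s_{U,V}\colon C/U\to C/W$ are the isomorphisms induced by projecting along $V$.

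The diagonal blocks are immediate: when $W=U$ a common complement is just a complement of $U$, the two induced isomorphisms are the respective identities, and the number of complements of an $r$-dimensional subspace of $C$ is $q^{rs}$, so $\Theta_{U,U}=q^{rs}\,\mathrm{id}$. Since no $U$ is a complement of itself, $\gamma_{M,M}$ has no diagonal blocks, so this already pins down the identity term. The real content is the vanishing of the off-diagonal blocks, and here I would exploit that the double braiding is natural, hence $\GL(C)$-equivariant, and read $(M\bullet N)C$ as the Harish-Chandra induction of the cuspidal $M\boxtimes N$ from the Levi $\GL_q(r)\times\GL_q(s)$. The block $\Theta_{U,W}$ is supported on the double coset recording the relative position of $U$ and $W$; by the geometric lemma (equivalently, by Schur orthogonality) its contribution factors through Jacquet restrictions of $M$ or $N$ to proper parabolics, which vanish precisely because $M$ and $N$ are cuspidal. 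Hence $\Theta_{U,W}=0$ whenever $U,W$ are in a "partial" position, and only $U=W$ and the transverse position $U\oplus W=C$ can survive.

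Case (i) then follows quickly. If $r\neq s$ there are no transverse pairs at all, since that would force $2r=r+s$, so every off-diagonal block vanishes and the double braiding is $q^{rs}\,\mathrm{id}$. If $r=s$ the transverse block encodes a Levi-intertwiner $M\boxtimes N\to N\boxtimes M$, which exists only when $M\cong N$; as $M\not\cong N$ it too vanishes, again leaving $q^{rs}\,\mathrm{id}$.

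For case (ii), with $M=N$ and $r=s$, the transverse blocks are genuinely nonzero and carry all the difficulty. I would compute $\Theta_{U,W}$ for a fixed transverse pair by choosing one common complement, writing every other common complement as its translate by an element of $\GL_q(r)$, and summing $M(\phi_V)\otimes M(\psi_V)$ over the resulting subset of $\GL_q(r)$; Schur's lemma for the irreducible $M$ collapses this correlated group sum to a scalar multiple of the single block $M(r_{U,W})\otimes M(s_{U,W})$, and counting the common complements (organised by the order $q^{r(r-1)/2}$ of the relevant unipotent subgroup and the Hecke parameter $q^{r}$) yields the coefficient $q^{r(r-1)/2}(q^{r}-1)$. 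Together with $\Theta_{U,U}=q^{r^2}\,\mathrm{id}$ this gives $\gamma_{M,M}\circ\gamma_{M,M}=q^{r(r-1)/2}(q^{r}-1)\,\gamma_{M,M}+q^{r^2}\,\mathrm{id}$, which one recognises as the Iwahori--Hecke relation of $\LieS_2$ at parameter $q^{r}$ under $\gamma_{M,M}=q^{r(r-1)/2}T$. The main obstacle is exactly this coefficient computation: making rigorous the passage from the unstructured sum over common complements to a single scalar, where the irreducibility of the cuspidal $M$ and the precise combinatorics of transverse complements must be combined — this is the "detailed study of iterated tensor products" the authors invoke.
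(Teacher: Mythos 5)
The paper itself contains no proof of this theorem: it records the matrix $\theta_{U,V}=M(r_{U,V})\otimes N(s_{U,V})$, notes that naturality and the braid axioms reduce everything to cuspidal $M,N$, and then cites the ``detailed study of iterated tensor products'' of \cite{GLFq}. So your reconstruction must stand on its own, and most of it does. The block formula for the double braiding, the diagonal count $\Theta_{U,U}=q^{rs}\,\mathrm{id}$ (complements of $U$ form a torsor under $\mathrm{Hom}(C/U,U)$), the remark that $\gamma_{M,M}$ has zero diagonal blocks, and the vanishing in partial position are all correct; for the last, your ``geometric lemma'' sketch can be made precise: when $D=U\cap W\neq 0$ every projection $\phi_V\colon U\to W$ along a common complement $V$ fixes $D$ pointwise, the unipotent group $\{1+\epsilon:\epsilon\in\mathrm{Hom}(C/W,D)\}$ permutes the common complements fixing each $\psi_V$ while translating $\phi_V$ through the full unipotent radical $\mathrm{Hom}(U/D,D)$ inside $\GL(U)$, and $\sum_h M(h)=0$ because a cuspidal representation has no invariants under the unipotent radical of a proper parabolic; the case $U\cap W=0$, $U+W\neq C$ is dual, using cuspidality of $N$. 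Your case (i) is also fine: for $r=s$ the transverse block is $\sum_f M(-f^{-1})\otimes N(f)$ over isomorphisms $f\colon W\to U$ (common complements being exactly the graphs of such $f$), and this vanishes for $M\not\cong N$ by orthogonality of matrix coefficients.

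The genuine gap is the coefficient in (ii), exactly the step you flag as the main obstacle, and your account of it would fail. The number of common complements of a transverse pair is $|\GL_q(r)|=q^{r(r-1)/2}(q-1)(q^2-1)\cdots(q^r-1)$, \emph{not} $q^{r(r-1)/2}(q^r-1)$, so no organisation of that count by unipotent subgroups and ``Hecke parameters'' can produce the stated coefficient. What Schur's lemma (orthogonality of matrix coefficients of the irreducible $M$) actually gives is that the correlated sum $\sum_{g\in\GL_q(r)}M(g^{-1})\otimes M(g)$ equals $\frac{|\GL_q(r)|}{\dim M}$ times the flip, so the transverse block of $\gamma_{M,M}\circ\gamma_{M,M}$ is $\frac{|\GL_q(r)|}{\dim M}$ times the corresponding block of $\gamma_{M,M}$ (up to a sign $\omega_M(-1)$ of the central character coming from $\phi_{V_f}=-f^{-1}$, a bookkeeping point your proposal also glosses over). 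Identifying $|\GL_q(r)|/\dim M$ with $q^{r(r-1)/2}(q^r-1)$ is therefore equivalent to the degree formula $\dim M=(q-1)(q^2-1)\cdots(q^{r-1}-1)$ for cuspidal representations of $\GL_q(r)$ --- a substantial classical theorem of Green, alternatively obtained from Gelfand--Graev theory, which your proposal never invokes. This missing input is exactly why the paper, for the companion Hecke algebroid theorem packaging these same relations, appeals to Gelfand--Graev, Gelfand, Harish-Chandra and Macdonald: the quadratic relation is not complement-counting plus Schur's lemma, but requires this representation-theoretic ingredient (or an equivalent one, such as the known parameters of the endomorphism algebra of a Harish-Chandra induced cuspidal).
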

\noindent
The invertibility of the braiding on $\Rep \catGL_q$ then follows.

\subsection{Hecke algebroids}

Let $\catX$ be a monoidal category.  A \emph{Yang--Baxter operator} $y$ on a family of objects $X_s$, $s\in S$, of $\catX$ is a family of isomorphisms
\[
y_{s,t} : X_s \otimes X_t \to X_t \otimes X_s
\]
such that 
\[ \xymatrix{  
 & X_t \otimes X_s \otimes X_u  \ar[r]^-{1\otimes y} &  X_t \otimes X_u \otimes X_s \ar[rd]^-{y\otimes 1} \\
X_s \otimes X_t \otimes X_u \ar[ru]^-{y \otimes 1} \ar[rd]_-{1\otimes y} &&&  X_u \otimes X_t \otimes X_s\;. \\
 & X_s \otimes X_u \otimes X_t    \ar[r]_-{y\otimes 1} & X_u \otimes X_s \otimes X_t    \ar[ru]_-{1\otimes y} 
 }\]
When $\catX$ is braided, $y_{s,t} = \gamma^{}_{X_s,X_t}$ is an example.

Suppose we are given functions
$d : S \times S \to \CC \backslash \{0\}$ and  $e : S \to \CC$,
with $d$ symmetric. 
The \emph{Hecke algebroid} $\catH (d,e)$ is the $\CC$-linear (i.e. enriched in $\Vect $) strict monoidal category universally generated by a family of objects indexed by $S$ bearing a Yang--Baxter operator satisfying
\begin{align*}
y_{ts} y_{st} &= d(s,t) \,1 \;\text{ for } s \neq t
\\
y_{ss} y_{ss} &= e(s)\, y_{ss} + d(s,s) \,1\,.
\end{align*}

\begin{Theorem}[Joyal--Street \cite{GLFq}] 
$\Rep \catGL_q$ is the projective {\upshape(}$= \Vect $-enriched Cauchy{\upshape)} completion of a Hecke algebroid.
\end{Theorem}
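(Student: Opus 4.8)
The plan is to realise $\Rep\catGL_q$ as the Cauchy completion of the Hecke algebroid $\catH(d,e)$ attached to the family of cuspidal representations. First I would let $S$ be the set of isomorphism classes of cuspidal objects of $\Rep\catGL_q$, and for each $s\in S$ choose a representative $X_s$ of degree $r_s$ (meaning $X_s A\neq 0$ precisely when $\dim A=r_s$). Since $\Rep\catGL_q$ is braided, the braidings $\gamma_{X_s,X_t}$ constitute a Yang--Baxter operator on the family $(X_s)_{s\in S}$. The two identities of the previous theorem then read, for $s\neq t$, as $\gamma_{X_t,X_s}\circ\gamma_{X_s,X_t}=q^{r_sr_t}\,1$, and for $s=t$ as the quadratic relation with $e(s)=q^{r_s(r_s-1)/2}(q^{r_s}-1)$ and $d(s,s)=q^{r_s^2}$. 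Hence the cuspidals bear a Yang--Baxter operator obeying exactly the defining relations of $\catH(d,e)$ with the symmetric datum $d(s,t)=q^{r_sr_t}$ and $e$ as above, so the universal property of the Hecke algebroid supplies a $\CC$-linear monoidal functor $F\colon\catH(d,e)\to\Rep\catGL_q$ carrying generators to cuspidals and the generating Yang--Baxter operators to the braidings.

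Because $\Rep\catGL_q$ is $\Vect$-enriched and abelian, hence Cauchy complete (finite direct sums exist and idempotents split), $F$ extends uniquely to $\bar F\colon\overline{\catH(d,e)}\to\Rep\catGL_q$ out of the Cauchy completion, and it remains to prove $\bar F$ an equivalence. I would treat essential surjectivity and full faithfulness separately. Essential surjectivity is the softer half: representations over $\CC$ are semisimple, so every object of $\Rep\catGL_q$ is a direct sum of irreducibles, and by Harish-Chandra theory each irreducible of a single $\GL_q(n)$ occurs as a summand of some $\bullet$-product $X_{s_1}\bullet\cdots\bullet X_{s_m}$ of cuspidals; thus every object is a retract of a finite direct sum of images of $F$, which is exactly what $\bar F$ being essentially surjective requires.

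For full faithfulness it suffices that $F$ itself be fully faithful, since hom-spaces in the Cauchy completion are recovered by sandwiching those of $\catH(d,e)$ between split idempotents in the Karoubian target. So I must match the hom-spaces $\Rep\catGL_q(X_{\mathbf{s}},X_{\mathbf{t}})$ between words of cuspidals with the morphisms of $\catH(d,e)$. Two inputs are required: first, that such a hom-space vanishes unless the multiset $\mathbf{t}$ is a rearrangement of $\mathbf{s}$, reflecting the disjointness of the Harish-Chandra series of non-isomorphic cuspidals; and second, that the hom-spaces among words of a fixed cuspidal support assemble into an Iwahori--Hecke algebra of type $A$ with parameter $q^{r_s}$ on each block, the distinct blocks contributing the invertible scalars $d(s,t)$. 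On the algebroid side the defining relations of $\catH(d,e)$ cut the braid-group morphisms down to precisely these Hecke algebras, of dimension $\prod_i n_i!$, and $F$ sends generators to generators; a surjection between algebras of equal finite dimension is an isomorphism.

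The hard part will be this last identification --- that the braidings already generate the full intertwining algebra $\operatorname{End}_{\Rep}(X_s^{\bullet n})$ and are subject to no relations beyond the braid and quadratic Hecke relations, so that the expected dimension $n!$ is attained and not collapsed. This is exactly the Howlett--Lehrer description of the endomorphism algebra of a Harish-Chandra induced cuspidal, specialised to type $A$, where the relative Weyl groups are symmetric groups and the algebras are untwisted Hecke algebras. The explicit analysis of the iterated products $(M_1\bullet\cdots\bullet M_n)C=\bigoplus_{0\leq V_1\leq\cdots\leq V_{n-1}\leq C}M_1V_1\otimes\cdots\otimes M_n(C/V_{n-1})$, recorded just before the previous theorem, is what renders both the generation statement and the dimension count tractable.
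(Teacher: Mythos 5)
Your proposal is correct and takes essentially the same route the paper indicates: you build the Hecke algebroid on the isomorphism classes of cuspidal representations, read off $d(s,t)=q^{r_s r_t}$ and $e(s)=q^{r_s(r_s-1)/2}(q^{r_s}-1)$ from the preceding theorem on the braiding, obtain the comparison functor from the universal property, and reduce the equivalence to the classical facts about Harish-Chandra induction (disjointness of cuspidal series, and intertwining algebras of induced cuspidals being Hecke algebras of dimension $\prod_i n_i!$). These are precisely the inputs the paper delegates to Gelfand--Graev, Gelfand, Harish-Chandra and Macdonald, which you invoke in their later Howlett--Lehrer formulation.
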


The objects of the Hecke algebroid are isomorphism classes of cuspidal representations 
(which are in bijection with irreducible monic polynomials in $\FF_q[x]$ with non-zero constant term).  
We use results of Gelfand and Graev~\cite{GG1962}, Gelfand~\cite{Gel1970}, Harish-Chandra~\cite{H-C1970}
and Macdonald~\cite{Macdon1995}.

 Let us investigate whether we can lift the comultiplication
 \[
 \Hall_\catA \to \Hall_\catA \mathbin{\tilde{\otimes}} \Hall_\catA 
 \]
 of Green to the categorical level. The promonoidal multiplication
\[
P : \catC^{\op} \times \catC^{\op} \times \catC \to \Set\,,
\]
on the groupoid $\catC$ coming from $\catA$, is defined on morphisms
\[
A' \stackrel{u}{\longto} A \,, \quad 
B' \stackrel{v}{\longto} B \,, \quad 
C \stackrel{w}{\longto}  C'
\]
by
\[
\mathop{P(u,v\,;\,w)}(f,g) = (wfu,v^{-1}gw^{-1})
\]
\[
\xymatrix{
A \    \ar[r]^f &C \ar[d]_w \ar[r]^g &    B  \\
\ar[u]^u A' \ \ar[r]_{f'} &  C' \ar[r]_{g'} & B'  \ar[u]_v
 }\]
Since $\catC$ is a groupoid, $\catC^{\op} \cong \catC$.    
Therefore, we have 
\[
\overline P  : \catC^{\op} \times \catC \times \catC \to \Set 
\]
defined by
\[
\overline P (C\,;\,A,B) = P(A,B\,;\,C)
\]
and
\[
 \mathop{\overline P  (w\,;\,u,v)}(f',g') = (w^{-1},f'u^{-1},vg'w)\,.
\]

A functor of the form $T : \catA^{\op}  \times \catB \to \Set $ is called a module (profunctor or distributor)
\[
\catA \stackrel{T}{\longto} \catB
\]
from $\catA$ to $\catB$, so that, for modules $\catA \stackrel{T}{\longto} \catB \stackrel{S}{\longto} \catC$, 
we have
\[
\catA \stackrel{S\circ T}{\longto} \catC
\]
defined by
\[
\mathop{(S\circ T)}(A,C) = \int^B S (B,C) \times T(A,B)\,.
\]
Morphisms (2-cells) between modules are natural transformations between the $\Set $-valued functors of two variables. Consider the diagram of modules below in which we will define the 2-cell $\psi$. 
\[
\xymatrix{ & \catC \times \catC \ar[ld]_-{\overline P \times \overline P}  \ar[rr]^P  && \catC \ar[rd]^-{\overline P} 
\\
\catC \times \catC \times \catC \times \catC \ar[rrd]^{\cong}_-{1\times \gamma \times 1} 
 \ar@{}[rrrr]_-*{\psi}|-*{\textrm{\rotatebox{30}{$\Longrightarrow$}}} &&& & \catC \times \catC 
\\
&& \catC \times \catC \times \catC \times \catC \ar[rru]_-{P\times P} 
}
\]
The top path is:
\[
\bigl(\overline P \circ P) \bigl((A,B),(C,D)\bigr) = \int^X \overline P (X\,;\,A,B) \times P (C,D\,;\,X) \,.
\]
The bottom path is:
\begin{multline*}
\lefteqn{(P\times P) \circ (1 \times \gamma \times 1) \circ 
   \mathop{(\overline P \times  \overline P)} \bigl((A,B),(C,D)\bigr)} \\
= \int^{T,U,V,W} P(T,U\,;\,C) \times P(V,W\,;\,D) \times \overline P  (A\,;\,T,V) \times \overline P (B\,;\,U,W) \,.
\end{multline*}
The definition of $\psi$ involves diagrams (i) and (ii), as follows.

\noindent
\begin{inparaenum}[(i)]
\item 
$\xymatrix{
 & C  \ar[d]^u 
\\
A \ \ar[r]^f & X \ar[d]^v \ar[r]^g & B 
\\
 & D & & 
}$ 
\qquad\qquad
\item 
$  \xymatrix{
T \ar[d]_u \ar[r]^{f'} &C \ar[r]^{g'} &U\ar[d]^{u''}
\\
A\ar[d]_{v'} &&B  \ar[d]^{v''}
\\
V \ar[r]_{f''}  &D  \ar[r]_{g''} & W
\\
}$ 
\end{inparaenum}

\noindent
Given $T,U,V,W$, put $X = T \oplus U  \oplus  V  \oplus  W$ so that we have a relation
\begin{eqnarray*}
P(T,U\,;\,C) \times P(V,W\,;\,D)   \times  \overline P (A\,;\,T,V) \times \overline P (B\,;\,U,W)
\\
 \psi \ {-\hspace{-2.5mm}\raisebox{-0.3mm}{$\downarrow$}}  \hspace{40mm}
\\
\overline P (X\,;\,A,B)  \times P(C,D\,;\,X)   \hspace{20mm}
\end{eqnarray*}
defined by
\[
\psi_{\mathrm{(ii),(i)}} =  \begin{cases}
1 & \text{when (i) inside (ii) gives a $3 \times 3$ diagram of s.e.s.}
\\
0 & \text{otherwise}.
\end{cases}
\]
This defines a natural transformation $\psi$ at the $\CC$-linear level.
 
 So $\catC$ becomes ``lax pro-bimonoidal''.   
 It is a categorical version of a bialgebra.   
 This seems worthy of study as a ``categorification'' of the Hall bialgebra.     
 So does the question of an antipode at the categorical level.
 
\subsection{Charades}
 
The sense of the term \emph{charade} intended here is that of Kapranov~\cite{Kapr1995} 
who suggests that the promonoidal structures discussed in (Section~\ref{Promon}) Example~\ref{ProEx4} 
might be used to find a common setting for two different generalizations of class field theory. 
A~related direction of interest is to study the special case $[\catGL_q, \Vect ]$ in connection with combinatorics.  
Joyal species are the $q=0$ case.

\section{Mackey functors and classifying spaces\label{Lecture3}} 

 J.\,A.\,Green has a fundamental paper~\cite{Green1971} in this subject (as he did for Section~\ref{Lecture2}). 
 Other important contributors are A.\,W.\,M.\,Dress~\cite{Dress1973},~\cite{Dress1975}  and S.\,Bouc~\cite{BoucBook}.

In this section we describe the evolution of the definition of Mackey functor itself, 
the structure of the category of Mackey functors~\cite{MFCCC}, 
the centre of the monoidal category of Mackey functors~\cite{Tambara2008}, 
and discuss a connection with the calculation of the classifying spaces of Lie groups. 
(For the last, thanks go to Vincent Franjou for alerting me to~\cite{JackMcC1992}).

\paragraph*{\it Microcosm principle\upshape:\ \ }
A Mackey functor for a group $G$ can simultaneously be regarded as:
\begin{itemize}
\item a generalized representation of $G$, and
\item a decategorified representation theory for $G$.
\[
\Rep G \subseteq \operatorname{Mackey} G \ni  ``K_0 \Rep G\,"
\] 
\end{itemize}

\subsection{The definitions\label{Lecture3sub1}}
  
Left Kan extension $\Lan_i$ along a functor $i : H \to G$ is the functor which is left adjoint to restriction $\Res_i$ along $i$:

\vskip-10pt 
\[ \xymatrix @R-3mm {
[H,\funcM] \ar@<1.5ex>[rr]^{\Lan_i} \ar@{}[rr]|-{\perp} && [G,\funcM] \ar@<1.5ex>[ll]^{\Res_i}  \\
V\circ i && \ar@{|->}[ll] V
}\]
Under smallness conditions on $i$ and with $\funcM$ cocomplete, 
the natural transformation $\lambda : i \circ p \Implies j \circ q$ that should point left to right in the comma category square below 
induces a natural isomorphism
$ \Res_j \circ \Lan_i \cong \Lan_q \circ \Res_p $.

\[ \xymatrix{
 i \downarrow j \ar[d]_p\ar[rr]^q &&K \ar[d]^j  \ar@/^4pc/[dd]^{\Lan_q(W_p)}\\
H\ar[rr]_i  \ar[rrd]_{W}  &&G \ar@{}[r]|*{\cong} \ar[d]^{\Lan_i W} &
\\ 
&&\funcM  & 
}
\]
If a category $D$ is a disjoint union
\[
D = \sum_{\alpha \in \Lambda} D_\alpha
\]
of subcategories $m_\alpha : D_\alpha \to D$ then, for all functors $r : D \to K$, 
we have 
\begin{align*}
 & \qquad \Lan_r \cong \sum_{\alpha \in \Lambda} \Lan_{r \circ m^{}_\alpha} \circ \Res\,.
\\
 & \xymatrix{
  D_\alpha \ar[r]^{m_\alpha} 
 & D \ar[rr]^r  \ar[rd]_{T}  && K \ar[ld]^-{\Lan_r T \; \cong \; \sum_\alpha  \Lan_{r\,m^{}_\alpha}(T\,m^{}_\alpha)} \\
&& \funcM &}
\end{align*}
For a groupoid $D$, each object $d \in D$ gives a group $D(d)=D(d,d)$ under composition.    
Let $\Lambda$ be a representative set of objects in $D$ for all isomorphism classes.   
We have
\[
D \simeq \sum_{d\in\Lambda} D(d)\,.
\]

We apply this to subgroups $i : H \to G$ and $j : K \to G$ of a group $G$, and to $D = i \downarrow j$.    Objects of the groupoid  $D$ are elements $g$ of $G$ while morphisms $(h,k) : g \to g'$ are elements of $H \times K$ such that $kg =g'h$.   Isomorphism classes of objects of $D$ are in bijection with double cosets
\[
KgH = \{kgh \mid k \in K, h \in H\}\,.
\]
Let $[K\backslash G/H] \subseteq G$ represent all double cosets $KgH$ so that we have an equivalence of categories
\[
\sum_{g\in[K\backslash G/H]} (i\downarrow j) (g) \;\simeq \; i\downarrow j\,.
\]
Let $p_g : (i \downarrow j) (g) \to H$ and $q_g : (i \downarrow j) (g) \to K$ be the restrictions of $p$ and $q$.  
By the Kan extension results we have
\[
\Res_j \circ \ \Lan_i \cong \sum \ \Lan_{q_g} \circ \ \Res_{p_g}\,.
\]
Putting $K^g = g^{-1} Kg$ and $^g H = g H g^{-1}$, the restrictions $p_g$ and $q_g$ induce isomorphisms
\begin{align*}
& (i\downarrow j) (g) \cong H \cap K^g \text{ and } (i\downarrow j)(g) =  {}^gH \cap K\,.
\\ 
& \qquad \xymatrix @R-3mm {
 ^g H \cap K\ar[dd]_\leq \ar[dr]^\cong \ar[rr]^\cong_{\gamma_g}   && H \cap K^g \ar[ld]_\cong \ar[dd]^\leq \\
 & (i\downarrow j) (g) \ar[ld]^{q_g} \ar[rd]_{p_g} \ar[dd] & 
\\
K &  & H  
\\
  & i\downarrow j   \ar[lu]^q    \ar[ru]_p} 
\end{align*}
In representation theory, we take $\funcM = \Mod_k$ for a commutative ring $k$ and put
\[
\Res^G_H = \Res_i \text{ and } \Ind^G_H = \Lan_i\,.
\]

\paragraph*{Mackey Decomposition Theorem:}
\[
\Res^G_H \circ \Ind^G_H  \cong 
 \sum_{g\in[K\backslash G/H]} \Ind^K_{{^{g}H\cap K}}  \circ     \Res_{\gamma^{}_g}  \circ \Res^H_{H \cap K^g}\,.
\]

\noindent
This inspires the technical axiom 4 in the following.

\begin{Definition}[Green \cite{Green1971}]
For a group $G$, a \emph{Mackey functor over} $k$ assigns
\begin{itemize}\addtolength{\itemsep}{-7pt}%
\item to each $H \leq G$, a $k$-module $M(H)$,
\item to each $K \leq H \leq G$, a pair of module morphisms
$
t^H_K : M(K) \to M(H)$ and $r^H_K : M(H) \to M(K)$,
\item 
for $H \leq G$ and $g\in G$, a module isomorphism \ 
$
c_{g,H} : M(H) \stackrel{\cong}{\longto} M (^g H)$,
\end{itemize}
satisfying the following four axioms: 
\begin{enumerate}[\upshape 1.]
\item
 $t^H_K \  t^K_L = t^H_L$ \text{ and } $r^K_L \  r^H_K = r^H_L$,
\item
$c_{g',^g H} \ c_{g,H} = c_{g'g,H}$ \text{ and } $c_{h,H} = 1_{M(H)}$,
\item
$c_{g,H} \  t^H_K = t^{g_H}_{g_K} \  c_{g,K}$ \text{ and } $c_{g,K} \  r^H_K = r^{g_H}_{g_K} \  c_{g,H}$
\item
 $r^L_K \  t^L_H =  \sum_{g\in[K\backslash L/H]} \  t^K_{{^{g}H\cap K}} \  c_{g,H\cap K^g} \  r^H_{H\cap K^g}$.
\end{enumerate}
$t^H_K$ is called \emph{transfer}, \emph{trace} or \emph{induction},
$r^H_K$ is called \emph{restriction}, and
$c_{g,H}$ is called a \emph{conjugation map}.
\end{Definition}

\noindent
Suppose $\funcM$ is a cocomplete monoidal category whose tensor preserves colimits in each variable.   Suppose $i : H \to G$ is the inclusion of a subgroup $H \leq G$ and that the categories $[H,\funcM]$ and $[G,\funcM]$ are equipped with the pointwise tensor products.    For $V \in [G, \funcM]$ and $W \in [H,\funcM]$, there is an isomorphism
\[
V \otimes \Lan_i (W) \cong \Lan_i (\Res_i (V) \otimes W).
\]
In the case $\funcM = \Mod _k$, we obtain the following.

\paragraph*{Frobenius Reciprocity:} \quad
$ V \otimes \Ind^G_H (W) \cong \Ind^G_H (\Res^G_H (V) \otimes W)$.

\medskip\noindent
This inspires the technical axiom 6 in the following.
 
\begin{Definition} 
A \emph{Green functor} $A$ for $G$ over $k$ is a Mackey functor equipped with a $k$-algebra structure on each $k$-module $A(H)$, $H \leq G$, satisfying
\begin{enumerate}[\upshape 1.]\setcounter{enumi}{4}
\item 
$t^H_K$, $r^H_K$, $c_{g,K}$ are $k$-algebra morphisms,
\item 
 $a \cdot t^H_K (b) = t^H_K \bigl(r^H_K (a) \cdot b\bigr)$ 
 and $t^H_K (b) \cdot a = t^H_K \bigl(b \cdot r^H_K (a)\bigr)$.
\end{enumerate}
\end{Definition}

\begin{example} 
Each representation $R$ of $G$ determines a Mackey functor $M_R$ with
 \[
 M_R (H) = \GSet(G/H,R) \,.
 \]
\end{example} 
 
\begin{example} 
There is a Green functor $A$ for $G$ over $\ZZ $ with
\[
A(H) = \ringK_0 \Rep_k (H) \,.
\]
\end{example} 

Green himself showed that ``Mackey functors'' could be thought of as a pair of functors $M^\ast$ and $M_\ast$, one contravariant and one covariant, defined essentially on the category $\catC_G$ of connected finite $G$-sets.
 
Dress gave an equivalent definition in terms of $M^\ast$ and $M_\ast$ now extended to the whole category of finite $G$-sets.

Lindner~\cite{Lindner1976} combined $M^\ast$ and $M_\ast$ into a single functor defined on the category of spans between finite $G$-sets. 
For this construction, $\GSet$ can be replaced by any \emph{lextensive} category $\catE$: 
one with finite coproducts, finite limits, and such that the functor
\begin{align*}
\catE/X \times \catE/Y &\longto \catE/X + Y
\\
(U\to X, V \to Y) &\longmapsto (U+V \to X+Y)
\end{align*}
is an equivalence for all $X$ and $Y$.

Write $\Spn (\catE)$ for the autonomous monoidal category for which:
\begin{itemize}
\item 
 objects are those of $\catE$;
\item
 morphisms $[S] : X  \longto 
  Y$ are isomorphism classes of ``spans'' $X \stackrel{u}{ \longfrom} S \stackrel{v}{ \longto} Y$; 
\item
composition is by pullback
$ \xymatrix @C-3mm @R-3mm {
 & & *+{S \smash{\operatorname*{\times}\limits_{Y}} T} \ar[ld]  \ar[rd] & & \\
 & \ar[ld] \ S \ar[rd] & & \ar[ld] \,T \ar[rd] & \\
 X & & Y & & Z}$
\item
the tensor product is the cartesian product in $\catE$
\[
\xymatrix @C-3mm @R-3mm{ 
 & \ar[ld] \ S \ar[rd]&&&& \ S' \ar[ld]  \ar[rd] &&&& S\times S' \ar[ld] \ar[rd]\\
X && Y & \ar@{}[u]|{\mbox{$\times$}} & X' && Y'  & \ar@{}[u]|{\mbox{$=$}}&  X \times X' &  & Y \times Y'}  
\]
\\
(it is not cartesian product in Spn$(\catE)$); 
\item
each object is its own dual
\[
\Spn  (\catE) (X\times Y, Z) \cong \Spn  (\catE) (Y,X \times Z)\,.
\]
\end{itemize}
This construction can be made for any category $\catE$ with finite limits.

\medskip\noindent
If $\catE$ is lextensive, the coproduct + in $\catE$ gives direct sum in $\Spn (\catE)$ :
\begin{align*}
\Spn (\catE) (X,Y + Z) &\cong \Spn (\catE) (X,Y) \times \ \Spn  (\catE) (X,Z)
\\
\Spn (\catE) (X + Y, Z) &\cong \Spn (\catE) (X,Z) \times \ \Spn  (\catE) (Y,Z)\,.
\end{align*}
This implies that Spn$(\catE)$ is a category with homs enriched in commutative monoids.   An enriched functor is one that preserves direct sums which exist.

\begin{Definition}[Lindner \cite{Lindner1976}]
A \emph{Mackey functor on} $\catE$ is an enriched functor
 \[
 M : \Spn (\catE) \longto \Mod _k\,.
 \]
\end{Definition}

\noindent
The Dress definition is recaptured when $\catE$ is finite $G$-sets, and $M^\ast$ and $M_\ast$ are obtained by composing $M$ with
\[ \xymatrix{
\catE^{\op} \ar[rr]^-{(\ )^\ast} && \Spn (\catE) && \catE \ar[ll]_-{(\ )_\ast}
}\]
where 
\begin{align*}
(f : X \longto Y)^{\ast} &=  (Y \stackrel{f}{ \longleftarrow} X \stackrel{1_X}{ \longto} X) 
\\
(f : X \longto Y)_{\ast} &= (X \stackrel{1_X}{ \longleftarrow} X \stackrel{f}{ \longto} Y) \,.
\end{align*}

\subsection{The category of Mackey functors\label{Lecture3sub2}}

The category of Mackey functors on $\catE$ is the commutative-monoid-enriched functor  category
\[
\Mky _k (\catE) : = [\Spn  (\catE), \Mod _k]_{\add}\,.
\]
It is closed monoidal by Day convolution:
\begin{align*}
(M \ast N) (Z) &= \int^Y M(Z\times Y) \otimes_k NY
\\
N^M (Z) &= \Mky _k (\catE) (M(Z \times -),N)
\end{align*}
the unit $J$ is the \emph{Burnside functor}. That is,
$JX$ is the free $k$-module on the commutative monoid under coproduct of isomorphism classes of objects of $\catE(X)$. 

\begin{Proposition}
 Green functors $A$ are monoids in $\Mky _k (\catE)$.
\end{Proposition}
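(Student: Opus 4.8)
The plan is to identify monoids for the convolution product $\ast$ with lax monoidal functors out of $\Spn(\catE)$, and then to recognise the latter as precisely Green's data. (Here ``Green functor on $\catE$'' means a Mackey functor $A$ with a $k$-algebra structure on each $A(X)$ subject to the evident $\catE$-versions of axioms 5 and 6, which recover the definition above when $\catE$ is finite $G$-sets and $X=G/H$.) First I would record that $\ast$ is nothing but Day convolution for the monoidal structure on $\Spn(\catE)$ given by the cartesian product $\times$ of $\catE$, with unit the terminal object $1$: rewriting the coend $(A\ast A)(Z)=\int^Y A(Z\times Y)\otimes_k AY$ by self-duality of $\Spn(\catE)$ and the density (co-Yoneda) isomorphism recovers the standard form $\int^{X,Y}\Spn(\catE)(X\times Y,Z)\cdot AX\otimes_k AY$. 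The general feature of Day convolution I would then invoke is that, over a cocomplete symmetric monoidal closed base such as $\Mod_k$, a monoid for the convolution product is the same as a lax monoidal functor $A\colon(\Spn(\catE),\times)\longto(\Mod_k,\otimes_k)$. Unravelled, the multiplication is a natural family
\[
\mu^{}_{X,Y}\colon AX\otimes_k AY\longto A(X\times Y),
\]
and associativity of the monoid together with the two-sided unit law against the Burnside functor $J=\Spn(\catE)(1,-)$ translate term for term into the coherences of a lax monoidal functor, the unit $k\longto A(1)$ being the image of $\eta\colon J\longto A$ under enriched Yoneda.

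Second I would unpack this lax monoidal structure into Green's data. For each object $X$ the algebra multiplication on $A(X)$ is $\mu^{}_{X,X}$ followed by $A$ applied to the diagonal span $X\times X\xleftarrow{\Delta} X\xrightarrow{1} X$, and the algebra unit is $k\longto A(1)$ composed with $A$ of the span $1\longfrom X\xrightarrow{1} X$ induced by the terminal map $X\longto 1$. The algebra axioms for $A(X)$ then fall out of the lax monoidal coherences together with the coassociativity and counitality making $X$ a comonoid in $(\catE,\times)$, while axiom 5 (that the transfers $t^H_K$, restrictions $r^H_K$ and conjugations $c_{g,H}$ are algebra morphisms) is naturality of $\mu^{}_{X,Y}$ against the spans representing these maps, combined with the compatibility of those spans with $\Delta$.

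The heart of the argument, and where I expect the real work to lie, is Frobenius reciprocity (axiom 6), $a\cdot t^H_K(b)=t^H_K\bigl(r^H_K(a)\cdot b\bigr)$. I would derive it from naturality of $\mu$ applied to a single pullback square in $\catE$, namely the one computing the composite in $\Spn(\catE)$ of the transfer span with the diagonal, so that the two sides of the identity become the two readings of that square through $\mu$. This is Lindner's point that the span calculus internalises $\Ind$, $\Res$ and the projection formula $\Ind^G_H(\Res^G_H V\otimes W)\cong V\otimes\Ind^G_H W$ into one diagram. Conversely, to see that every Green functor arises this way I would reconstruct $\mu^{}_{X,Y}$ for \emph{arbitrary} $X,Y$ from the per-object algebra structures: using lextensivity of $\catE$ to reduce $X$ and $Y$ to their connected (orbit) pieces, defining $\mu$ orbitwise from the algebra multiplications, and verifying naturality against all spans. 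Checking that this is well defined and natural with respect to the covariant (transfer) maps, and not merely the contravariant (restriction) maps, is exactly the step that fails without axiom 6; that axiom is precisely the compatibility required. The two constructions are then mutually inverse, which identifies Green functors with monoids in $\Mky_k(\catE)$.
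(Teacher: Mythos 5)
Your proposal follows exactly the route of the paper's own (very brief) proof: the paper's entire argument is the parenthetical remark that the Green structure yields $AX \otimes_k AY \stackrel{\mu}{\longto} A(X\times Y)$ and $k \stackrel{\eta}{\longto} A1$, corresponding to $A \ast A \stackrel{\mu}{\longto} A \stackrel{\eta}{\longfrom} J$ under the universal property of convolution. Your identification of $\ast$ as Day convolution for $(\Spn(\catE),\times,1)$, and of monoids for a convolution with lax monoidal functors, is that same observation made precise; and your treatment of the Frobenius axiom 6 via the pullback square computing the composite of a transfer span with the diagonal is correct, and is indeed the point of Lindner's span formulation.

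There is, however, one step that would fail as written: the claim that axiom 5 in full --- including that the \emph{transfers} $t^H_K$ are $k$-algebra morphisms --- follows from naturality of $\mu_{X,Y}$ against the corresponding spans ``combined with the compatibility of those spans with $\Delta$''. For a restriction span $f^\ast$ this compatibility is the commuting square $\Delta_Y \circ f = (f\times f)\circ \Delta_X$ in $\catE$, transported into $\Spn(\catE)$ by the contravariant embedding, and the argument works; similarly for conjugations. For a transfer span $f_\ast$ one would need $\Delta_Y^\ast \circ (f\times f)_\ast = f_\ast \circ \Delta_X^\ast$ in $\Spn(\catE)$, and computing the left-hand composite by pullback gives the span through $X \times_Y X$ rather than through $X$; these agree only when $f$ is monic. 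So a monoid in $\Mky_k(\catE)$ does \emph{not} have multiplicative transfers (already the Burnside functor fails this: induction does not preserve products), and no naturality argument can produce them. The discrepancy traces to a slip in the paper's stated Definition: in Green's axioms only restrictions and conjugations are required to be algebra morphisms, transfers being governed solely by the Frobenius axiom 6 --- which your pullback-square analysis handles correctly. With the transfer clause deleted from axiom 5, as it must be for the Proposition to hold as an identification, your proof is complete and agrees with the paper's.
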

\noindent
(The algebra structure in the Green definition gives $AX \otimes_k AY \stackrel{\mu}{\longto} A (X \times Y)$, 
$k \stackrel{\eta}{\longto} A\,1$  corresponding to $A \ast A \stackrel{\mu}{\longto}  A \stackrel{\eta}{\longleftarrow}  J)$.

\begin{Definition}[M. Barr \cite{Barr1979}]
A monoidal category $\funcM$ is said to be $\ast$-\emph{autonomous} when there exists an equivalence of categories
 $S : \funcM^{\op} \longto \funcM$
 and a natural isomorphism $\funcM (B,SA) \cong \funcM \bigl(I,S(A\otimes B)\bigr)$.
\end{Definition}

\noindent 
Write $\fMky_k(\catE)$ for the full subcategory  of $\Mky_k(\catE)$ consisting of those $M$ 
with each $MX$ finitely generated and projective.
 
\begin{Theorem}[Panchadcharam--Street \cite{MFCCC}] {\upshape[}$G$ finite, $\catE$ = finite $G$-sets, $k$ a field\/{\upshape]}\\
$\fMky_k(\catE)$ is a monoidal full subcategory of $\Mky_k(\catE)$ which is $\ast$-autonomous 
and satisfies $S(M)X = (MX)^\ast$.
\end{Theorem}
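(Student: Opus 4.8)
The plan is to recognise this as an instance of the general principle that Day convolution over an \emph{autonomous} (compact closed) base yields a $\ast$-autonomous functor category once the value category carries a perfect duality. The base $\Spn(\catE)$ is autonomous with each object its own dual, and the value category is $\Mod_k$; restricting to $\fMky_k(\catE)$ forces the values to be finitely generated projective, which over the field $k$ means finite-dimensional, precisely the setting in which $(-)^{\ast}=\Mod_k(-,k)$ is an exact, reflexive duality. I would first \emph{define} the duality functor $S$ and verify Barr's axioms afterwards. Because $\Spn(\catE)$ is autonomous with $X^{\ast}=X$, reversal of spans gives an isomorphism of categories $\sigma\colon\Spn(\catE)\to\Spn(\catE)^{\op}$ which is the identity on objects; writing $M^{\op}$ for $M$ regarded as a functor $\Spn(\catE)^{\op}\to\Mod_k^{\op}$, set
\[
S(M)\colon\quad \Spn(\catE)\xrightarrow{\ \sigma\ }\Spn(\catE)^{\op}\xrightarrow{\ M^{\op}\ }\Mod_k^{\op}\xrightarrow{\ (-)^{\ast}\ }\Mod_k .
\]
This is a Mackey functor with $S(M)X=(MX)^{\ast}$ on objects, and it sends a morphism $M\to M'$ to the pointwise dual $(M'X)^{\ast}\to(MX)^{\ast}$, so $S\colon\Mky_k(\catE)^{\op}\to\Mky_k(\catE)$ as wanted.

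Next I would check that $\fMky_k(\catE)$ is a monoidal full subcategory, i.e. closed under $\ast$ and containing the unit $J$. Since $G$ is finite there are only finitely many orbit types $G/H$, and $\Spn(\catE)$ is the biproduct completion of its full subcategory on these orbits; hence an additive functor with finite-dimensional values on orbits has finite-dimensional values everywhere, and the coend defining convolution reduces to a finite colimit over the orbits. Applying this to
\[
(M\ast N)(X)=\int^{Y}M(X\times Y)\otimes_k NY
\]
exhibits $(M\ast N)(X)$ as a finite colimit of tensor products of finite-dimensional spaces, hence finite-dimensional; and $JX$ is the free $k$-module on the finitely many isomorphism classes of objects of $\catE$ over $X$, so $J\in\fMky_k(\catE)$. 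Thus the closed monoidal structure restricts to $\fMky_k(\catE)$.

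For $\ast$-autonomy two things remain. First, $S$ restricts to an \emph{equivalence} $\fMky_k(\catE)^{\op}\simeq\fMky_k(\catE)$: it preserves finite-dimensionality, and the natural isomorphism $V\cong V^{\ast\ast}$ for finite-dimensional $V$ gives $S^2\cong\mathrm{id}$, so $S$ is its own quasi-inverse. Second, I would produce Barr's natural isomorphism $\Mky_k(\catE)(N,SM)\cong\Mky_k(\catE)(J,S(M\ast N))$ by a formal end/coend calculation. Writing the hom as an end and using $\Mod_k(V,W^{\ast})\cong(V\otimes_k W)^{\ast}$,
\[
\Mky_k(\catE)(N,SM)=\int_{X}\Mod_k\bigl(NX,(MX)^{\ast}\bigr)\cong\int_{X}(MX\otimes_k NX)^{\ast}.
\]
On the other side, $J$ is the convolution unit, namely the linearized representable at the unit object $1$ of $\Spn(\catE)$, so $\Mky_k(\catE)(J,P)\cong P(1)$ by the Yoneda lemma; taking $P=S(M\ast N)$ and using $(M\ast N)(1)=\int^{Y}MY\otimes_k NY$ gives
\[
\Mky_k(\catE)(J,S(M\ast N))\cong\Bigl(\int^{Y}MY\otimes_k NY\Bigr)^{\ast}\cong\int_{Y}(MY\otimes_k NY)^{\ast},
\]
the last step because $(-)^{\ast}$ carries the coend (a colimit) to an end. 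The two ends coincide, naturally in $M$ and $N$.

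The main obstacle I expect is not this last manipulation, which is purely formal and valid over any $k$, but the interplay of \emph{finiteness} and \emph{duality}. One must check that convolution preserves the finiteness condition — the reduction of the defining coend to the finite set of orbit types is the substantive point — and, above all, that $(-)^{\ast}$ is a genuine duality, i.e. that the reflexivity $M\cong S^2M$ holds. It is exactly this reflexivity, available only for finite-dimensional values over a field, that upgrades the formal isomorphism above into Barr's requirement that $S$ be an \emph{equivalence}, and so is the crux of the theorem.
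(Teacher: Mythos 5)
The paper itself gives no proof of this theorem — it is quoted from Panchadcharam--Street \cite{MFCCC} — so there is no in-text argument to compare against; judged on its own terms, your proof is correct and follows essentially the route of the cited source: pointwise $k$-linear dualization transported along the self-duality of $\Spn(\catE)$, closure of the finiteness condition under convolution by reducing the defining coend to the finitely many orbit types, and Barr's condition via Yoneda at the Burnside unit together with the identification of both sides with the end $\int_X (MX\otimes_k NX)^{\ast}$. Two minor points of precision, neither a gap: $JX$ is the free $k$-module on isomorphism classes of \emph{transitive} $G$-sets over $X$ (equivalently $k\otimes_{\NN}$ the Burnside monoid), not on all isomorphism classes of objects over $X$, of which there are infinitely many; and in your ``purely formal'' step the contravariant occurrences of the coend variable must be read through the span-reversal $\sigma$ — this is what makes $\int^{Y} MY\otimes_k NY$ a genuine coend and makes your two ends literally the same dinatural integrand, so the naturality you assert does hold.
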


\noindent
We also looked to some extent into the theory (Bouc~\cite{BoucBook}) of modules over Green functors $A$ 
including Morita equivalence for Green functors.

 \subsection{The monoidal centre\label{Lecture3sub3}}

A \emph{lax half braiding} on an object $A$ of a monoidal category $\funcM$ is a natural family of morphisms
\[
u_X : A \otimes X \longto X \otimes A
\]
such that
\[
\xymatrix{
 A \otimes X \otimes Y \ar[rr]^{u_X \otimes Y} \ar[rd]_{u_X \otimes 1_Y} 
& & X \otimes Y \otimes A \\
& X \otimes A \otimes Y \ar[ru]_{1_x \otimes u_x}}  
 \quad \mbox{and} \quad   
 \xymatrix{A \otimes I \ar[rr]^{u_I} \ar@{=}[rd] & & I \otimes A \ar@{=}[ld] \\
 & A }
\]

The \emph{lax centre} $\catZ_\ell \funcM$ of $\funcM$ has objects pairs $(A,u)$ as above, with the obvious morphisms.   It is a monoidal category with tensor product
 \[
 (A,u) \otimes (B,v) = \bigl(A \otimes B, (u_- \otimes 1) \circ (1 \otimes v_-)\bigr)
 \]
and has a lax braiding
\[
\gamma^{}_{(A,u),(B,v)} = u_B : (A,u) \otimes (B,v) \to (B,v) \otimes (A,u)\,.
\]
A \emph{half braiding} is an invertible lax half braiding.  
The \emph{centre} $\catZ\funcM$ of $\funcM$ consists of the $(A,u)$ with $u$ invertible.

\medskip\noindent
If $\funcM$ is autonomous then
\[
\catZ_\ell \funcM = \catZ\funcM\,.
\]

\begin{example}   
The centre of the category of representations of a Hopf algebra $H$ 
is equivalent to the category of representations of the Drinfeld double $D(H)$ of $H$.
\end{example}   

\begin{example}   
The centre of the category $[G, \Set]$ (with cartesian monoidal structure) 
is equivalent to the braided monoidal category $[\Aut (G), \Set]$ as in Example~\ref{ProEx3} 
of Section~\ref{Lecture2}.
\end{example}   

\begin{Fact}   
A monoid $(A,u)$ in the monoidal lax centre $\catZ_\ell \funcM$ becomes a (lax) monoidal functor
\[
- \otimes A : \funcM \to \funcM
\]
via
\[ \xymatrix @C+7mm {
X \otimes A \otimes Y \otimes A \ar[r]^-{1\otimes U_Y \otimes 1}
& X \otimes Y \otimes A \otimes A \ar[r] ^-{1\otimes 1 \otimes U}
& X \otimes Y \otimes A \;.
}\]
\end{Fact}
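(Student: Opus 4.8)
The plan is to exhibit the coherence data making $F = -\otimes A$ a lax monoidal endofunctor of $\funcM$ and then to verify the lax monoidal functor axioms. First I would unpack what a monoid $(A,u)$ in $\catZ_\ell\funcM$ amounts to at the level of $\funcM$: an object $A$ carrying a lax half braiding $u_X\colon A\otimes X\to X\otimes A$, together with a multiplication $\mu\colon A\otimes A\to A$ and unit $\eta\colon I\to A$ that make $(A,\mu,\eta)$ a monoid in $\funcM$ \emph{and} that are morphisms in $\catZ_\ell\funcM$. Since the half braiding on $(A,u)\otimes(A,u)$ is $(u_-\otimes 1)\circ(1\otimes u_-)$, the assertion that $\mu$ is a morphism $(A,u)\otimes(A,u)\to(A,u)$ is precisely the compatibility
\[
u_X\circ(\mu\otimes 1_X) = (1_X\otimes\mu)\circ(u_X\otimes 1_A)\circ(1_A\otimes u_X),
\]
while $\eta$ being a morphism out of the unit object $(I,\mathrm{can})$ gives $u_X\circ(\eta\otimes 1_X)=(1_X\otimes\eta)$ up to the unit constraints.

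Next I would take as structure maps exactly the displayed composite,
\[
\varphi_{X,Y}=(1_{X\otimes Y}\otimes\mu)\circ(1_X\otimes u_Y\otimes 1_A)\colon FX\otimes FY \longto F(X\otimes Y),
\]
together with $\varphi_0=\lambda_A^{-1}\circ\eta\colon I\longto I\otimes A=FI$. Naturality of $\varphi_{X,Y}$ in $X$ is immediate from bifunctoriality of $\otimes$, since a map $X\to X'$ occupies the leftmost factor and does not interact with $u$ or $\mu$; naturality in $Y$ follows from naturality of the half braiding family $u$.

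The principal obstacle is the associativity axiom: the two composites $FX\otimes FY\otimes FZ\to F(X\otimes Y\otimes Z)$ must coincide. Geometrically both move the three copies of $A$ to the right past $X,Y,Z$ and then multiply them, but in different orders and with different bracketings, so the equality needs three ingredients. First, the hexagon axiom for the lax half braiding, $u_{Y\otimes Z}=(1_Y\otimes u_Z)\circ(u_Y\otimes 1_Z)$, rewrites the single move of the $X$-copy of $A$ past $Y\otimes Z$ into two successive moves past $Y$ and then $Z$. Second, the compatibility of $\mu$ recorded above is exactly what is needed to commute a multiplication past an object $Z$: moving the product $\mu(A\otimes A)$ past $Z$ equals moving each factor past $Z$ separately and then multiplying. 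Third, bifunctoriality allows moves on disjoint strands to be reordered, and associativity of $\mu$ reconciles the two bracketings $(A_1A_2)A_3$ and $A_1(A_2A_3)$ that the two paths produce. Carrying these out reduces both composites to the same morphism; I expect this is cleanest to present as a single pasting or string diagram rather than a chain of equations.

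Finally I would dispatch the two unit axioms, which I expect to be routine. The right unit axiom uses $u_I=1$ (the second lax half braiding condition) together with the right unit law $\mu\circ(1_A\otimes\eta)=1_A$. The left unit axiom uses the compatibility of $\eta$ recorded in the first paragraph to trivialise the passage of $\eta$'s copy of $A$ past $X$, followed by the left unit law $\mu\circ(\eta\otimes 1_A)=1_A$. Thus the full monoid-in-$\catZ_\ell\funcM$ structure is used: the half braiding hexagon and its unit condition, the $\catZ_\ell$-morphism property of $\mu$ and $\eta$, and the associativity and unit laws of $\mu,\eta$ — with only the associativity coherence requiring genuine bookkeeping.
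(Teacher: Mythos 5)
Your proposal is correct: the paper states this Fact without proof, and your verification is exactly the intended one, taking the displayed composite (together with $\eta$ composed with the unit constraint) as the lax monoidal structure maps. The three ingredients you isolate for associativity --- the hexagon $u_{Y\otimes Z}=(1_Y\otimes u_Z)\circ(u_Y\otimes 1_Z)$, the fact that $\mu$ and $\eta$ are morphisms in $\catZ_\ell\funcM$ (so that a product of two copies of $A$ moves past an object the same way as the factors moved separately, and $\eta$'s copy moves trivially), and bifunctoriality plus associativity of $\mu$ --- are precisely what reduce both composites to a common normal form, and the unit axioms follow as you say from $u_I=1$ and the unit laws of the monoid.
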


This has a consequence for the \emph{Dress construction} 
which assigns to each Mackey functor $M$ on $\catE$ and each $Z \in \catE$ a Mackey functor
\[
M_Z := M \otimes (- \times Z) : \Spn  (\catE) \to \Mod_k\,.
\]
It implies, if $Z$ is a monoid in $\catZ_\ell \catE$ then each Green functor $A$ on $\catE$ defines a Green functor
\[ 
A_Z = \bigl( \xymatrix @C+2mm {
 \Spn(\catE) \ar[r]^-{-\times Z} & \Spn (\catE)\ar[r]^{A} & \Mod_k} \bigr)\,.
\]

\begin{GenQuest}   
If $\funcM$ belongs to a particular class of monoidal categories, when does $\catZ_\ell \funcM$ (or $\catZ\funcM$) belong to the same class?
\end{GenQuest}   

For monoidal functor categories, there are results along these lines in various papers with Day, Panchadcharam, Pastro and the author. A paper~\cite{PastroSt2008} with Pastro was motivated by Tambara~\cite{Tambara2006}. 
Now we shall look at some of that work.
 
Suppose $\catA$ is a monoidal $\catV$-category. 
We are ultimately interested in the (lax) centre of $[\catA,\catV\,]$. 
However, we first look at the functor category $[\catA^{\op}  \otimes \catA , \catV\,]$ 
whose objects might be called $\catV$-modules from $\catA$ to itself.

\medskip 
A $\catV$-functor $T : \catA^{\op}  \otimes \catA \to  \catV$ is a \emph{left Tambara module} 
when it is equipped with a natural family of morphisms
 \[
 \alpha^{}_\ell (A,X,Y) : T(X,Y) \to T (A\otimes X, A \otimes Y)
 \]
such that $\alpha^{}_\ell (I,X,Y) = 1_{T(X,Y)}$ and 
\[ \xymatrix{
T(X,Y) \ar[rd]_{\alpha^{}_\ell (A\otimes A',X,Y)\phantom{AAA}} \ar[rr]^{\alpha^{}_\ell (A',X,Y)}  
 && T(A'\otimes X, A' \otimes Y) \ar[ld]^{\phantom{AAA}\alpha^{}_\ell (A,A'\otimes X, A' \otimes Y)} \;.
\\
&T(A\otimes A' \otimes X, A \otimes A' \otimes Y }
\]
Similarly for \emph{right Tambara module}.   
For a \emph{Tambara module} we have both, plus 
\[ \xymatrix{
 T(X,Y) \ar[rrrd]^{\alpha(A,B,X,Y)} \ar[d]_{\alpha^{}_r (B,X,Y)}   \ar[rrr]^{\alpha^{}_\ell (A,X,Y)} 
  &&& T(A\otimes X,A\otimes Y) \ar[d]^{\alpha^{}_r (B,A\otimes X,A\otimes Y)}
\\
 T(X\otimes B,Y\otimes B)  \ar[rrr]_{\alpha^{}_\ell (A,X\otimes B, Y\otimes B)} 
  &&& T(A\otimes X\otimes B, A \otimes Y \otimes B) \;.
}\]

\bigskip\noindent 
If $\catA$ is closed, the $\alpha^{}_\ell, \alpha^{}_r, \alpha$ correspond to natural families
\begin{align*}
\beta_\ell (A,X,Y) &: T(X,Y^A) \longto T (A \otimes X,Y) \qquad
\\
\beta_r (B,X,Y) &: T(X,^{\; B}\!Y) \longto T (X \otimes B,Y) \qquad 
\\
\beta (A,B,X,Y) &: T(X,^{\; B}\!Y^A) \longto T (A \otimes X \otimes B,Y)\,.
\end{align*}
We call the system consisting of
\[ \begin{cases}
[\catA,\catV\,]  \longto  [\catA^{\op} \otimes \catA,\catV\,]
\\
F  \longmapsto T_F 
\\
T_F (X,Y) = F(Y^X)  
\end{cases}
\]
a \emph{Cayley} $\catV$-\emph{functor}: 
it takes the Cauchy tensor product (convolution) to ``tensor product over $\catA\,$'' 
(= composition of endomodules of $\catA$).   
The image of the Cayley $\catV$-functor consists of the left Tambara modules for which the $\beta_\ell$ are invertible.

A Tambara module is \emph{left strong} when the $\beta_\ell$ are invertible.   
It is \emph{strong} when the $\beta$ are invertible. 
We use the subscript `$\elless$' for left strong and `$\ess$' for strong.

\begin{Proposition}
If $\catA$ is closed monoidal, then we have that
$\catZ_\ell [\catA,\catV\,] \simeq \TambLS (\catA)$ and  
$\catZ [\catA,\catV\,]        \simeq \TambS (\catA)$.
Moreover, if $\catA$ is autonomous, every Tambara module is strong.
\end{Proposition}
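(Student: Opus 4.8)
The plan is to erect both equivalences on the Cayley $\catV$-functor $F\mapsto T_F$, $T_F(X,Y)=F(Y^X)$, and to extract the final clause from duality. First I would record that, for closed $\catA$, the profunctor $T_F$ already carries a canonical \emph{left} Tambara structure: the map $\alpha^{}_\ell(A,X,Y)\colon F(Y^X)\to F\bigl((A\otimes Y)^{A\otimes X}\bigr)$ is $F$ applied to the transpose of $1_A\otimes\ev\colon A\otimes X\otimes Y^X\to A\otimes Y$. Its unit and associativity axioms are immediate from functoriality of the internal hom, and the associated comparison $\beta_\ell$ is $F$ of the canonical isomorphism $(Y^A)^X\cong Y^{A\otimes X}$, hence invertible; so $T_F$ is always left strong. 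This is exactly the content recalled above that the image of the Cayley functor is the left Tambara modules with invertible $\beta_\ell$, each such module being $T_F$ for $F=T(I,-)$ via $\beta_\ell(A,I,Y)\colon T(I,Y^A)\xrightarrow{\cong}T(A,Y)$.

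The main step is to identify a lax half braiding on $F$ with the data upgrading this canonical left-strong $T_F$ to a full Tambara module, namely a right action $\alpha^{}_r$ together with the compatibility square $\alpha$. Because $\ast$ preserves colimits in each variable and every $N$ is canonically a colimit of representables, a half braiding $u_N\colon F\ast N\to N\ast F$ is determined by its components at $N=\catA(B,-)$; co-Yoneda gives $(F\ast\catA(B,-))(C)\cong F(C^B)$, and the closed structure rewrites $u_{\catA(B,-)}$ as a natural family $\alpha^{}_r(B,X,Y)\colon F(Y^X)\to F\bigl((Y\otimes B)^{X\otimes B}\bigr)$ --- the map which, unlike $\alpha^{}_\ell$, genuinely requires $u$, since $B$ must be moved past $Y^X$. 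I would then check, using Fubini for coends, that the two hexagons and the unit law for $u$ reproduce precisely the associativity and unit axioms for $\alpha^{}_r$ and the Tambara compatibility relating $\alpha^{}_\ell$ and $\alpha^{}_r$. The assignment is natural in $F$ and reversible (recover $u$ by running the co-Yoneda computation backwards); a morphism of central objects is automatically compatible with the canonical $\alpha^{}_\ell$ and is compatible with $\alpha^{}_r$ exactly when it is compatible with $u$, so the resulting functor is fully faithful, and it is essentially surjective by the Cayley description. This gives $\catZ_\ell[\catA,\catV]\simeq\TambLS(\catA)$, and one checks that the centre's tensor product is carried to the composition (``tensor over $\catA$'') of Tambara modules recalled above.

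For the centre I would restrict to invertible $u$. Since $T_F$ is already left strong, the full comparison $\beta$ is invertible iff $\beta_r$ is, and under the dictionary of the previous paragraph $\beta_r$ is invertible precisely when the half braiding $u$ is; the inverse data is supplied by $u^{-1}_N\colon N\ast F\to F\ast N$. Restricting the equivalence therefore yields $\catZ[\catA,\catV]\simeq\TambS(\catA)$. (By the final clause below, autonomy of $\catA$ forces $\TambLS(\catA)=\TambS(\catA)$, whence $\catZ_\ell[\catA,\catV]\simeq\catZ[\catA,\catV]$, in accord with the earlier remark that $\catZ_\ell\funcM=\catZ\funcM$ in autonomous situations.)

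For the last assertion, when $\catA$ is autonomous the internal homs are computed by duals and $\ev$ is a duality counit, so the comparisons $\beta_\ell,\beta_r,\beta$ that measure strength acquire explicit inverses assembled from the coevaluations. This is exactly the phenomenon of the Example above, where the presence of a right dual makes the defining composite invertible and renders every morphism nuclear; the same straightening shows $\beta$ invertible for every Tambara module, so $\Tamb(\catA)=\TambS(\catA)$ and every Tambara module is strong. I expect the real work to lie in the second paragraph: keeping the variances straight, passing cleanly between the $\alpha$'s and the $\beta$'s through the closed adjunctions, and verifying that the two half-braiding hexagons genuinely encode both the associativity of $\alpha^{}_r$ and the $\alpha^{}_\ell$--$\alpha^{}_r$ compatibility. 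Once that dictionary is in place, the centre step and the autonomous step are short.
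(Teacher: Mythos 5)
The paper itself offers no proof of this Proposition: it is quoted from Pastro--Street \cite{PastroSt2008}, and the surrounding text supplies only the ingredients (the Cayley functor $F\mapsto T_F$ and the identification of its image with the left Tambara modules having invertible $\beta_\ell$). Measured against that source, your strategy is essentially the proof on record: restrict a lax half braiding to representables, collapse both sides by co-Yoneda and the closed structure, read off the right Tambara structure completing the canonical left-strong structure on $T_F$, pass to invertible $u$ for the genuine centre, and in the autonomous case assemble inverses to the strength comparisons from (co)evaluations, exactly in the spirit of the paper's Example where a right dual forces the approximation composite to be invertible.

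There is, however, one concrete error to repair, and it is not merely notational. With the paper's conventions ($Y^A$ right adjoint to $A\otimes-$, and ${}^{B}Y$ right adjoint to $-\otimes B$), co-Yoneda gives $(F\ast\catA(B,-))(C)\cong F({}^{B}C)$ while $(\catA(B,-)\ast F)(C)\cong F(C^B)$; your formula $F(C^B)$ for the first conflates the two internal homs. The point matters because the component $u_{\catA(B,-)}$ at $C$ is then a map $F({}^{B}C)\to F(C^B)$ passing from one hom to the other, which is precisely $\beta_r(B,I,C)$ for the sought right structure; the general case is recovered as $\beta_r(B,X,Y)=u_{\catA(B,-)}(Y^X)$ via the canonical isomorphisms $({}^{B}Y)^X\cong{}^{B}(Y^X)$ and $Y^{X\otimes B}\cong(Y^X)^B$. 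Relatedly, your audit of axioms is slightly off: a lax half braiding has one hexagon plus a unit law, and these yield exactly multiplicativity and unitality of $\alpha_r$ in $B$; the $\alpha_\ell$--$\alpha_r$ compatibility square costs nothing extra, being forced by naturality of $u$ in the object variable together with invertibility of the canonical $\beta_\ell$ (which also shows $\beta_r$ is determined by its $X=I$ components). With these corrections the remaining steps you give --- $\beta$ invertible iff $\beta_r$ invertible once $\beta_\ell$ is, $u$ invertible iff $\beta_r$ is, and the autonomous straightening --- go through as stated.
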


\begin{Definition}[Pastro--Street \cite{PastroSt2008}]
The \emph{double} of a monoidal $\catV$-category $\catA$ is the $\catV$-category $\catD\catA$ 
with the same objects as $\catA^{\op} \otimes \catA$ and
\[
\catD\catA \bigl((X,Y),(U,V)\bigr) = \int^{A,B} \catA (U,A \otimes X \otimes B) \otimes \catA (A\otimes Y \otimes B,V)\,.
\]
\end{Definition}

\begin{Proposition}
$\Tamb(\catA) \simeq [\catD\catA,\catV\,]$.
\end{Proposition}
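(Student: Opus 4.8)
\section*{Proof proposal}

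The plan is to exhibit $\catD\catA$ as the universal $\catV$-category through which Tambara-module data factors, and then read off the equivalence from the universal property of the coend defining its hom-objects. First I would check that $\catD\catA$ really is a $\catV$-category with the same objects as $\catA^{\op}\otimes\catA$. The identity at $(X,Y)$ is the class of $(1_X,1_Y)$ in the $A=B=I$ summand, and the composite of (classes of) data $U\xrightarrow{u}A\otimes X\otimes B$, $A\otimes Y\otimes B\xrightarrow{v}V$ with $P\xrightarrow{p}C\otimes U\otimes D$, $C\otimes V\otimes D\xrightarrow{q}Q$ is defined by \emph{outer whiskering}, namely the pair
\[
\bigl(P\xrightarrow{p}C\otimes U\otimes D\xrightarrow{C\otimes u\otimes D}(C\otimes A)\otimes X\otimes(B\otimes D)\bigr),\ \bigl((C\otimes A)\otimes Y\otimes(B\otimes D)\xrightarrow{C\otimes v\otimes D}C\otimes V\otimes D\xrightarrow{q}Q\bigr)
\]
landing in the $E=C\otimes A$, $F=B\otimes D$ summand. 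Associativity and unitality follow from the coherence of $\catA$ together with dinaturality of the coend; this is routine.

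There is then a canonical identity-on-objects $\catV$-functor $K:\catA^{\op}\otimes\catA\to\catD\catA$ picking out the $A=B=I$ summand. Given a $\catV$-functor $S:\catD\catA\to\catV$, I would set $T=S\circ K$, obtaining an underlying $\catV$-module $\catA^{\op}\otimes\catA\to\catV$, and define $\alpha_\ell(A,X,Y)$ to be $S$ applied to the distinguished morphism $(X,Y)\to(A\otimes X,A\otimes Y)$ (the summand with outer factor $A$ on the left, $I$ on the right, carrying the identities $1_{A\otimes X},1_{A\otimes Y}$), and dually $\alpha_r(B,X,Y)$ from the summand with $A=I$. Because composition in $\catD\catA$ nests the outer whiskerings---$C\otimes A$ on the left, $B\otimes D$ on the right---the two associativity triangles for $\alpha_\ell$ and for $\alpha_r$ are exactly $\catV$-functoriality of $S$ on these generators, while the fact that the left and right whiskerings act on disjoint tensor factors yields the Tambara compatibility square defining $\alpha(A,B,X,Y)$. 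Thus $S\mapsto(T,\alpha_\ell,\alpha_r)$ produces a Tambara module.

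Conversely, given a Tambara module $(T,\alpha_\ell,\alpha_r)$, I would define $S$ on objects by $S(X,Y)=T(X,Y)$ and on hom-objects by the family, dinatural in $A$ and $B$,
\[
\catA(U,A\otimes X\otimes B)\otimes\catA(A\otimes Y\otimes B,V)\otimes T(X,Y)\longrightarrow T(U,V)
\]
that first applies $\alpha_\ell(A,X,Y)$ followed by $\alpha_r(B,A\otimes X,A\otimes Y)$ to reach $T(A\otimes X\otimes B,A\otimes Y\otimes B)$, and then uses $\catV$-functoriality of $T$ along $u$ and $v$. The associativity (triangle) axioms make this family dinatural in each of $A$ and $B$, so by closedness of $\catV$ it factors through the defining coend to give a morphism on $\catD\catA$-homs; preservation of composition and identities is again the nesting computation above, so $S$ is a genuine $\catV$-functor. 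Finally I would verify that these assignments extend to morphisms---a $\catV$-natural transformation between functors out of $\catD\catA$ is precisely a morphism of the associated Tambara modules---and that they are mutually inverse, yielding $\Tamb(\catA)\simeq[\catD\catA,\catV\,]$. The main obstacle is the coend bookkeeping: checking dinaturality of the structure family and matching, term by term, the associativity of composition in $\catD\catA$ with the left- and right-strength axioms of a Tambara module. Everything else---the object assignment and the bijection on underlying $\catV$-modules---is formal.
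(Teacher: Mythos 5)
The paper itself offers no proof of this Proposition: it is quoted from Pastro--Street \cite{PastroSt2008}, so your argument can only be judged on its own merits and against that source, whose proof is more conceptual (in essence, Tambara modules are identified with the algebras for a cocontinuous monad on $[\,\catA^{\op}\otimes\catA,\catV\,]$, and $\catD\catA$ arises as the associated Kleisli-style construction). Your direct two-way construction is the honest unpacking of the same content, and its skeleton is right: your ``outer whiskering'' composition on $\catD\catA$ is the correct one; restricting along the identity-on-objects functor $K:\catA^{\op}\otimes\catA\to\catD\catA$ and evaluating $S$ at the distinguished classes $(1_{A\otimes X},1_{A\otimes Y})$ does recover a triple $(T,\alpha^{}_\ell,\alpha^{}_r)$; the unit laws and associativity triangles correspond exactly to $S$ preserving identities and composition; and mutual inverseness follows because every class in the coend hom factors as a distinguished generator followed by a $K$-morphism.

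There is, however, one step whose justification is wrong as written. You claim that ``the associativity (triangle) axioms make this family dinatural in each of $A$ and $B$.'' They do not: the triangles only relate the components at $A$, $A'$ and $A\otimes A'$, and say nothing about a morphism $a:A\to A'$. Writing $\alpha(A,B,X,Y):T(X,Y)\to T(A\otimes X\otimes B,A\otimes Y\otimes B)$ for the combined strength, dinaturality in $A$ of your evaluation family reduces, after commuting $T(u,1)$ and $T(1,v)$ outwards, to the identity
\[
T(a\otimes 1_X\otimes 1_B,\,1)\circ\alpha(A',B,X,Y)\;=\;T(1,\,a\otimes 1_Y\otimes 1_B)\circ\alpha(A,B,X,Y)
\]
of morphisms $T(X,Y)\to T(A\otimes X\otimes B,A'\otimes Y\otimes B)$, i.e.\ to dinaturality in $A$ of the Tambara structure maps themselves. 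This is not a consequence of the triangles; it is part of the hypothesis that $\alpha^{}_\ell$ and $\alpha^{}_r$ are \emph{natural} families (natural in $X,Y$, dinatural in $A$, respectively $B$), so the repair is simply to invoke the correct axiom. The same point is silently needed in your forward direction: you must also check that the $\alpha^{}_\ell$ extracted from a functor $S$ is dinatural in $A$, which follows from the coend identifications in the homs of $\catD\catA$ (for $a:A\to A'$ the classes of $(a\otimes 1_X,\,1_{A'\otimes Y})$ and $(1_{A\otimes X},\,a\otimes 1_Y)$ coincide) together with functoriality of $S$. With these two corrections your proof is complete.
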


If $\catA$ is closed, there are canonical morphisms
\begin{align*}
&\tilde \beta_\ell : (X,Y^A) \longto (A \otimes X,Y) \\
&\tilde \beta : (X,^{\; B}\!Y^A) \longto  (A \otimes X \otimes B,Y)\,.
\end{align*}
in $\catD\catA$.   Inverting these gives the categories of fractions $\catD_{\elless}\catA$ and $\catD_{\ess}\catA$, respectively.

\begin{Proposition}
If $\catA$ is right closed, then
$\TambLS (\catA) \simeq [\catD_{\elless} \catA,\catV\,]$.
\\
If $\catA$ is closed, then
$ \TambS (\catA) \simeq [\catD_{\ess} \catA,\catV\,]$.
\end{Proposition}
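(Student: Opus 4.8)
The plan is to build everything on the preceding Proposition $\Tamb(\catA) \simeq [\catD\catA,\catV\,]$, under which a Tambara module $T$ corresponds to a $\catV$-functor $\hat T : \catD\catA \to \catV$ with $\hat T(X,Y) = T(X,Y)$ on objects. The key claim I would establish is that, under this correspondence, the structure morphism $\beta_\ell(A,X,Y)$ of $T$ is exactly the image $\hat T(\tilde\beta_\ell)$ of the canonical morphism $\tilde\beta_\ell : (X,Y^A) \to (A\otimes X,Y)$ in $\catD\catA$, and likewise $\beta(A,B,X,Y) = \hat T(\tilde\beta)$. Granting this identification, left strength of $T$ (invertibility of all the $\beta_\ell$) becomes equivalent to $\hat T$ sending every $\tilde\beta_\ell$ to an isomorphism, and strength of $T$ becomes equivalent to $\hat T$ inverting every $\tilde\beta$; the two asserted equivalences then fall out of the universal property of the localizations $\catD_{\elless}\catA$ and $\catD_{\ess}\catA$.

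First I would trace the matching of structures through the proof of $\Tamb(\catA)\simeq[\catD\catA,\catV\,]$. The hom-object $\catD\catA\bigl((X,Y),(U,V)\bigr) = \int^{A,B}\catA(U,A\otimes X\otimes B)\otimes\catA(A\otimes Y\otimes B,V)$ is constructed precisely so that a $\catV$-functor out of $\catD\catA$ is the same data as a $\catV$-functor on $\catA^{\op}\otimes\catA$ carrying the $\alpha_\ell,\alpha_r,\alpha$ actions, and the canonical morphisms $\tilde\beta_\ell,\tilde\beta$ arise as the images in these coends of the closed-structure adjunction isomorphisms. Using right closedness of $\catA$ to pass between $\alpha_\ell$ and $\beta_\ell$ (and full closedness to pass between $\alpha$ and $\beta$), I would verify the two identifications $\beta_\ell(A,X,Y)=\hat T(\tilde\beta_\ell)$ and $\beta(A,B,X,Y)=\hat T(\tilde\beta)$ by a direct computation with the defining coends, checking compatibility with naturality and with the unit and associativity constraints so that the invertibility conditions correspond exactly and not merely up to coherence isomorphism.

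Next I would invoke the universal property of the enriched category of fractions. Since $\catD_{\elless}\catA$ is obtained from $\catD\catA$ by inverting the family $\{\tilde\beta_\ell\}$, restriction along the localization $\catV$-functor $\catD\catA\to\catD_{\elless}\catA$ produces a fully faithful embedding $[\catD_{\elless}\catA,\catV\,]\hookrightarrow[\catD\catA,\catV\,]$ whose essential image consists of exactly those $\hat T$ that invert every $\tilde\beta_\ell$. By the identification of the previous step this essential image is precisely the left strong Tambara modules, yielding $\TambLS(\catA)\simeq[\catD_{\elless}\catA,\catV\,]$. Replacing $\{\tilde\beta_\ell\}$ by $\{\tilde\beta\}$ and $\catD_{\elless}\catA$ by $\catD_{\ess}\catA$ gives $\TambS(\catA)\simeq[\catD_{\ess}\catA,\catV\,]$ by the same argument, with full closedness used so that the $\beta_\ell$ and $\beta_r$ data assembling into $\beta$ are both available.

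The main obstacle I expect is the bookkeeping in the second paragraph: confirming that the Tambara-module structure morphisms match the abstract localizing morphisms $\tilde\beta_\ell,\tilde\beta$ \emph{on the nose} under the equivalence, rather than up to some coherence datum. This requires unwinding the coend presentation of $\catD\catA$ together with the Yoneda-type reasoning behind $\Tamb(\catA)\simeq[\catD\catA,\catV\,]$ and verifying that the closed-adjunction isomorphisms used to define $\beta_\ell,\beta$ are the very arrows inverted in forming $\catD_{\elless}\catA,\catD_{\ess}\catA$. Once that correspondence is secured, the localization universal property delivers both equivalences formally.
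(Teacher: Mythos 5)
Your proposal is correct and is essentially the intended argument: the paper states this Proposition without proof (it is a review of Pastro--Street \cite{PastroSt2008}), but the surrounding setup --- the equivalence $\Tamb(\catA)\simeq[\catD\catA,\catV\,]$, the canonical morphisms $\tilde\beta_\ell,\tilde\beta$, and the categories of fractions $\catD_{\elless}\catA,\catD_{\ess}\catA$ --- exists precisely so that the result follows by your route. Your two key steps (identifying $\beta_\ell$ and $\beta$ with the images of $\tilde\beta_\ell$ and $\tilde\beta$ under the corresponding $\catV$-functor, then invoking the universal property of the $\catV$-category of fractions, with right closedness sufficing for the left strong case) are exactly how the cited source proves it.
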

\noindent
In these cases, we have
\[
\catZ_\ell [\catA,\catV\,] \simeq [\catD_{\elless} \catA,\catV\,] 
\quad \text{ and } \quad
\catZ [\catA,\catV\,] \simeq [\catD_{\ess} \catA,\catV\,]\,.
\]

If $\catA$ is autonomous, we have
\[
\catZ [\catA,\catV\,] \simeq \Tamb  (\catA) \simeq [\catD\catA,\catV\,].
\]
In particular, this applies when $\catA = \Spn \catE$ to yield:
\begin{Proposition}
$\catZ \Mky _k (\catE) \simeq [\catD \Spn \catE, \Mod_k]_{\add}\,$.
\end{Proposition}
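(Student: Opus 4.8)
The plan is to derive this as the instance $\catA = \Spn(\catE)$ of the general chain of equivalences just established, namely that for any autonomous monoidal $\catV$-category $\catA$ one has $\catZ[\catA,\catV] \simeq \Tamb(\catA) \simeq [\catD\catA,\catV]$. Two facts do the work. First, $\Spn(\catE)$ is autonomous, each object being its own dual through the isomorphism $\Spn(\catE)(X\times Y,Z) \cong \Spn(\catE)(Y,X\times Z)$. Second, the monoidal category of Mackey functors $\Mky_k(\catE) = [\Spn(\catE),\Mod_k]_{\add}$, with its Day-convolution tensor product, is precisely a functor category of the form $[\catA,\catV]$ with $\catV = \Mod_k$ and $\catA = \Spn(\catE)$, once the commutative-monoid enrichment of $\Spn(\catE)$ (equivalently, its $k$-linearization) is taken into account, the subscript $\add$ selecting the direct-sum-preserving functors.

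First I would record the payoff of autonomy. Since $\Spn(\catE)$ is autonomous, the earlier Proposition guarantees that every Tambara module over it is strong, so $\Tamb(\Spn\catE) = \TambS(\Spn\catE)$; correspondingly the lax centre coincides with the centre, and it is legitimate to write $\catZ[\Spn\catE,\Mod_k]$ rather than $\catZ_\ell[\Spn\catE,\Mod_k]$ in the conclusion even though the convolution category $\Mky_k(\catE)$ itself need not be autonomous.

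Next I would assemble the equivalences. The autonomous case of the Tambara-module description of the centre gives $\catZ[\Spn\catE,\Mod_k] \simeq \Tamb(\Spn\catE)$, and the Proposition $\Tamb(\catA) \simeq [\catD\catA,\catV]$ applied to $\catA = \Spn\catE$ gives $\Tamb(\Spn\catE) \simeq [\catD\Spn\catE,\Mod_k]$. Composing and restricting to the additive parts on both sides yields $\catZ\Mky_k(\catE) \simeq [\catD\Spn\catE,\Mod_k]_{\add}$, as claimed.

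I expect the only real obstacle to be bookkeeping of the enrichment rather than any fresh categorical idea. One must check that the convolution tensor product on $\Mky_k(\catE)$, the coend defining $\catD\Spn\catE$, and the half-braiding/Tambara-module structures are all compatible with the commutative-monoid enrichment, so that the equivalence $\Tamb(\catA) \simeq [\catD\catA,\catV]$ restricts cleanly to direct-sum-preserving objects and carries the $\add$ subscript across. This compatibility holds because the direct sums of $\Spn(\catE)$ come from the finite coproducts of the lextensive category $\catE$, and the coends and tensors appearing in the double and in convolution preserve them; an object of the centre is then additive exactly when its underlying Mackey functor is, which matches the additive functors on $\catD\Spn\catE$ on the nose.
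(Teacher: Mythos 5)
Your proposal is correct and is essentially the paper's own argument: the paper obtains this proposition precisely by specializing the chain $\catZ[\catA,\catV] \simeq \Tamb(\catA) \simeq [\catD\catA,\catV]$ (valid for autonomous $\catA$) to $\catA = \Spn\catE$, $\catV = \Mod_k$, using that each object of $\Spn\catE$ is its own dual and that $\Mky_k(\catE) = [\Spn\catE,\Mod_k]_{\add}$. Your extra care about carrying the commutative-monoid enrichment and the $\add$ subscript through the equivalences is a point the paper leaves implicit, but it is bookkeeping within the same route rather than a different proof.
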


\noindent
Tambara~\cite{Tambara2008} goes further in the case where $\catE$ is the category $G$-set 
of finite $G$-sets for a finite group $G$. 
He shows that $\Mky_k (\GSet)$ has its centre of the form $\Mky_k (\catE)$ for some $\catE$.

Here is a conjecture which gives Tambara's result as a special case. 
Let $\catC$ be a small category such that its finite coproduct completion $\ffam \catC$ is lextensive.  
Let $\Aut \catC$ be the category of automorphisms in $\catC$; 
that is, an object is a pair $(X,u)$ where $u : X \to X$ is invertible in $\catC$.

\begin{Conjecture}
$\catZ \Mky _k (\ffam  \catC) \simeq \Mky _k (\ffam\Aut \catC)$   
\end{Conjecture}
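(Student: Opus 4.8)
The plan is to reduce the conjecture to a comparison of two small $k$-linear (commutative-monoid-enriched) categories and then to match their Cauchy completions. First I would apply the Proposition $\catZ\Mky_k(\catE)\simeq[\catD\Spn\catE,\Mod_k]_{\add}$ with $\catE=\ffam\catC$; this is available because $\ffam\catC$ is lextensive, so $\Spn(\ffam\catC)$ is autonomous, which is exactly the hypothesis under which that Proposition was derived (via $\catZ[\catA,\catV\,]\simeq\Tamb(\catA)\simeq[\catD\catA,\catV\,]$ for autonomous $\catA$). This gives
\[
\catZ\Mky_k(\ffam\catC)\simeq[\catD\Spn(\ffam\catC),\Mod_k]_{\add}.
\]
On the other side, by definition $\Mky_k(\ffam\Aut\catC)=[\Spn(\ffam\Aut\catC),\Mod_k]_{\add}$. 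Thus it suffices to produce an equivalence between these two additive functor categories. Since additive functor categories into $\Mod_k$ are invariant under passing to the $k$-linear Cauchy completion of the domain (splitting idempotents and adjoining finite direct sums), the whole problem collapses to an equivalence of Cauchy completions $\overline{\catD\Spn(\ffam\catC)}\simeq\overline{\Spn(\ffam\Aut\catC)}$.

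Next I would compute the hom-objects of the double explicitly. Using that every object of $\Spn(\ffam\catC)$ is self-dual and that its tensor product is cartesian product, the defining coend
\[
\catD\Spn\catE\big((X,Y),(U,V)\big)=\int^{A,B}\Spn\catE(U,A\otimes X\otimes B)\otimes\Spn\catE(A\otimes Y\otimes B,V)
\]
can be simplified by transporting the contravariant factor across the self-duality isomorphism and applying the density (co-Yoneda) formula to eliminate one coend variable. The surviving variable is then forced to appear both co- and contravariantly through the monoidal action: this is a trace-type coend, and it is precisely the mechanism that manufactures automorphisms, since the coend identifies a span that ``goes around the loop'' with its conjugates. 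Consequently the idempotents splitting off the diagonal part of $(X,Y)$ should be indexed by objects of $\ffam\catC$ equipped with an automorphism. I would then use lextensivity of $\ffam\catC$ to reduce to connected objects of $\catC$, match these splittings with the objects $(X,u)$ of $\ffam\Aut\catC$, and verify that composition in the double (built from $\otimes$ and the half-braiding) corresponds to span composition by pullback in $\ffam\Aut\catC$.

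The model to keep in mind, and the sanity check for the construction, is the group case already recorded in Section~\ref{Lecture3}: the centre of $[G,\Set]$ is $[\Aut G,\Set]$, where a half-braiding on a representation is exactly a compatible automorphism. The present conjecture is the span-enriched, self-dual ``Mackey'' upgrade of that example, and Tambara's theorem for $\catE=\GSet$ is the special case $\catC=\catC_G$ of connected finite $G$-sets, which I would exploit both as a guide and as a consistency test for the coend computation.

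I expect the main obstacle to be precisely this trace-coend step together with the ensuing Cauchy-completion matching. Unlike Tambara's $G$-set argument, there is no ambient group in which to run the double-coset and Mackey-decomposition bookkeeping, so the appearance of $\Aut\catC$ must be extracted intrinsically from the self-dual span structure. The two delicate points are: (i) showing that the idempotents splitting off the diagonal part of $(X,Y)$ are indexed \emph{exactly} by automorphisms, neither more nor fewer, which requires checking that the conjugation action implicit in the coend has the orbit structure of $\Aut\catC$ rather than of arbitrary endomorphisms; and (ii) verifying that the induced composition and the commutative-monoid (biproduct) structure transported across the comparison agree with span composition and coproduct in $\ffam\Aut\catC$, so that one obtains an equivalence of $k$-linear categories and not merely a bijection on objects and hom-modules. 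Once these compatibilities are established, the equivalence of Cauchy completions, hence of the additive functor categories, and hence the conjecture, follows.
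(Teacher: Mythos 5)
First, a point of comparison: the statement you are addressing is an \emph{open conjecture} in the paper. The author records it precisely because no general proof is known, noting only that Tambara has ``a long proof'' of the special case where $\catC$ is the category of connected finite $G$-sets (equivalently, $\catE = \GSet$). So there is no proof in the paper to measure yours against; the only question is whether your argument actually closes the gap, and it does not. Your opening reduction is sound and is exactly the entry point the paper makes available: since $\ffam\catC$ is lextensive, $\Spn(\ffam\catC)$ is autonomous, so the paper's Propositions give $\catZ \Mky_k(\ffam\catC) \simeq [\catD\Spn(\ffam\catC),\Mod_k]_{\add}$, and one is left to compare this with $[\Spn(\ffam\Aut\catC),\Mod_k]_{\add}$.

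Everything after that reduction, however, is a plan rather than a proof, and the two steps you yourself flag as ``delicate'' are not side conditions to be checked at the end --- they \emph{are} the mathematical content of the conjecture. Your treatment of the coend
\[
\catD\Spn\catE\bigl((X,Y),(U,V)\bigr)=\int^{A,B}\Spn\catE(U,A\otimes X\otimes B)\otimes\Spn\catE(A\otimes Y\otimes B,V)
\]
is heuristic: the coend identifies elements along \emph{arbitrary} spans $A \to A'$, $B \to B'$, not merely invertible morphisms, so it is far from formal that the resulting identifications-plus-idempotent-splittings have the orbit structure of $\ffam\Aut\catC$ rather than of some larger category of endomorphism data; distinguishing these two possibilities is exactly where Tambara's long computation does its work in the $G$-set case (via double cosets and Mackey decomposition), and, as you note, there is no ambient group here to replace that bookkeeping. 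Asserting that the trace-type coend ``manufactures automorphisms'' restates the conjecture; it does not prove it. There is also a smaller unverified point: your Cauchy-completion reduction must be run in the commutative-monoid-enriched (or, after applying $k\otimes -$ to homs, $k$-linear) setting, and the invariance of $[-,\Mod_k]_{\add}$ under Cauchy completion of the domain in that enriched sense is used but not established. In short: the setup is correct, and the model cases (the centre of $[G,\Set]$ being $[\Aut(G),\Set]$, and Tambara's theorem) are the right guides, but your argument stops exactly where the conjecture begins.
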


\noindent
Tambara has a long proof of this for $\catC$ the category of connected $G$-sets for a finite group $G$.

 \subsection{Classifying spaces\label{Lecture3sub4}}

Let $G$ be a compact connected Lie group.   Let $\classB G$ denote the classifying space of $G$ with universal $G$-fibre bundle $\univE G \to \classB G$: every $G$-fibre bundle $p : E \to X$  appears in a pullback square as follows.
\[
\xymatrix{
    E \ar[r]  \ar[d]_p  & \univE G  \ar[d]^p \\
   X \ar[r]_f    & \classB G 
  }
\]
For some classic $G$ there are nice explicit models of $\classB G$. 
A major problem is to calculate the cohomology groups of $\classB G$.

 \begin{Theorem}[Borel \cite{Borel1967}]
Let $\normN T$  denote the normalizer of a maximal torus $T$ of $G$ 
and let $\Weyl_G = \normN T/T$ be the Weyl group.   
If the prime $p$ does not divide $\# \Weyl_G$  then
 \[
 \classB \normN T \longto \classB G
 \]
induces an isomorphism on $\mmod p$ cohomology.
\end{Theorem}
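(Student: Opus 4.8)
The plan is to factor the computation through the maximal torus, exploiting two fibrations and using the hypothesis $p \nmid \#\Weyl_G$ twice: once to relate $\classB \normN T$ to the $\Weyl_G$-invariants in $H^*(\classB T)$, and once to identify those invariants with $H^*(\classB G)$. Throughout, write $\pi : \classB \normN T \to \classB G$ for the map of the theorem.

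First I would use the group extension $1 \to T \to \normN T \to \Weyl_G \to 1$, which gives a fibration $\classB T \to \classB \normN T \to \classB \Weyl_G$. In its Serre spectral sequence with $\FF_p$ coefficients the $E_2$-page is $H^s(\Weyl_G; H^t(\classB T; \FF_p))$, where $\Weyl_G$ acts by conjugation. Since the order of a finite group annihilates its positive-degree cohomology, $p \nmid \#\Weyl_G$ forces $H^s(\Weyl_G; -) = 0$ for $s > 0$; the spectral sequence collapses onto the edge column and yields $H^*(\classB \normN T; \FF_p) \cong H^*(\classB T; \FF_p)^{\Weyl_G}$, the ring of $\Weyl_G$-invariants. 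Concretely this is the averaging idempotent $\frac{1}{\#\Weyl_G}\sum_{w\in\Weyl_G} w$, which exists precisely because $\#\Weyl_G$ is invertible mod $p$.

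Second I would analyse the fibration $G/T \to \classB T \to \classB G$ induced by $T \hookrightarrow G$. The fibre $G/T$ carries a Bruhat--Schubert cell decomposition, so $H^*(G/T; \FF_p)$ is concentrated in even degrees with total dimension $\#\Weyl_G$; it is the coinvariant algebra of the $\Weyl_G$-action. Here the Chevalley--Shephard--Todd theorem, valid over $\FF_p$ exactly because $\#\Weyl_G$ is invertible, tells us that $\Weyl_G$ acts as a reflection group on $H^*(\classB T; \FF_p) = \FF_p[t_1,\dots,t_r]$, its invariant subring is again polynomial, and $H^*(\classB T; \FF_p)$ is free of rank $\#\Weyl_G$ over that subring. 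A Poincar\'e series count then shows the Serre spectral sequence of $G/T \to \classB T \to \classB G$ collapses, and its edge homomorphism gives an isomorphism $H^*(\classB G; \FF_p) \xrightarrow{\ \cong\ } H^*(\classB T; \FF_p)^{\Weyl_G}$ (restriction to $T$ lands in the invariants because inner automorphisms of the connected group $G$ act trivially on $H^*(\classB G)$).

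Finally I would check compatibility: the composite $\classB T \to \classB \normN T \xrightarrow{\pi} \classB G$ is the map induced by $T \hookrightarrow G$, so on $\FF_p$-cohomology the two isomorphisms above identify the same subring $H^*(\classB T; \FF_p)^{\Weyl_G}$, forcing $\pi^*$ to be the claimed isomorphism $H^*(\classB G; \FF_p) \cong H^*(\classB \normN T; \FF_p)$. As an independent check on injectivity one can invoke the Becker--Gottlieb transfer for $G/\normN T \to \classB \normN T \xrightarrow{\pi} \classB G$: since $\chi(G/\normN T) = \#\Weyl_G/\#\Weyl_G = 1$, the composite $\tau \circ \pi^*$ is the identity, so $\pi^*$ is a split monomorphism over any coefficients. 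The main obstacle is the second step: the collapse of the $G/T$ spectral sequence and the surjectivity onto the invariant subring genuinely require reflection-group invariant theory over $\FF_p$, and this is exactly where $p \nmid \#\Weyl_G$ cannot be dropped, the theorem failing in general for primes dividing $\#\Weyl_G$.
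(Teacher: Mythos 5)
First, for calibration: the paper offers no proof of this statement at all --- it is quoted from Borel \cite{Borel1967} as background for the Jackowski--McClure discussion --- so your proposal must stand on its own. Your first and third steps do stand: the collapse of the Serre spectral sequence of $\classB T \to \classB \normN T \to \classB \Weyl_G$, from the vanishing of $\coHom^s(\Weyl_G;-)$ for $s>0$ in the non-modular case, and the Becker--Gottlieb splitting of $\pi^*$ coming from $\chi(G/\normN T)=1$, are both correct and standard. The genuine gap is in your second step, and it sits exactly on the hard core of the theorem. The claim that ``a Poincar\'e series count then shows the Serre spectral sequence of $G/T \to \classB T \to \classB G$ collapses'' is not a valid inference: any such count requires knowing the graded dimensions of $\coHom^*(\classB G;\FF_p)$, which is precisely the unknown. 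Likewise, the assertion that $\coHom^*(G/T;\FF_p)$ ``is the coinvariant algebra'' of $\Weyl_G$ over $\FF_p$ is, as a statement about the ring and $\Weyl_G$-module structure, essentially equivalent to the theorem being proved; what Bruhat actually gives you for free is only evenness and the total dimension $\#\Weyl_G$, and a total-dimension count is not enough to run a degreewise Poincar\'e series argument.

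The repair is to promote your ``independent check'' into the main engine, in the right logical order. The transfer (applied to $\pi$, or to $\classB T\to\classB G$ using $\chi(G/T)=\#\Weyl_G\not\equiv 0 \pmod p$) embeds $\coHom^*(\classB G;\FF_p)$ into $\coHom^*(\classB T;\FF_p)^{\Weyl_G}$, which lives in even degrees; hence $\coHom^{\mathrm{odd}}(\classB G;\FF_p)=0$, hence $E_2^{s,t}$ of the $G/T$-fibration vanishes unless $s$ and $t$ are both even, and every differential (raising total degree by $1$) is zero --- \emph{that} is where collapse comes from, not from counting. The count is needed afterwards and must be degreewise: collapse gives $P_{\classB T}=P_{\classB G}\cdot P_{G/T}$, while Chevalley--Shephard--Todd freeness gives $P_{\classB T}=P_{\mathrm{inv}}\cdot Q$, where $Q$ is the Poincar\'e polynomial of a homogeneous basis of $\coHom^*(\classB T;\FF_p)$ over the invariant subring; one then needs $P_{G/T}=Q$, and indeed both equal $\sum_{w\in\Weyl_G}t^{2\ell(w)}=\prod_i(1-t^{2d_i})/(1-t^2)$, by Bruhat on one side and the Chevalley--Solomon factorization on the other (using that the mod $p$ invariant degrees agree with the rational ones, since taking $\Weyl_G$-invariants commutes with reduction when $\#\Weyl_G$ is invertible). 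Cancelling $Q$ yields $P_{\classB G}=P_{\mathrm{inv}}$, and injectivity then forces your isomorphism. Alternatively, you can bypass invariant theory entirely: by the Lefschetz fixed point theorem each $w\neq 1$ acts freely on $G/T$ and all cohomology is even, so $\coHom^*(G/T;\QQ)$ is the regular representation of $\Weyl_G$; hence for $p\nmid\#\Weyl_G$ the quotient $G/\normN T=(G/T)/\Weyl_G$ is $\FF_p$-acyclic, and the fibration $G/\normN T\to\classB\normN T\to\classB G$ gives the isomorphism in one step.
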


\begin{Theorem}[Dwyer--Miller--Wilkerson \cite{DMW1987}]
Suppose $ G = \SO(3)$ and $p = 2$ with $\#\Weyl_G $ even. 
Then the square
\[
\xymatrix{
    \classB \LieD_8 \ar[r]  \ar[d]   & \classB  \OO_{24}  \ar[d] \\
   \classB \OO(2) \ar[r]     & \classB \SO(3) 
  }
\]
is seen by $\mmod 2$ cohomology as a homotopy pushout.
\end{Theorem}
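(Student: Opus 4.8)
The plan is to read the assertion concretely. Form the honest homotopy pushout $P = \classB\OO(2) \cup_{\classB\LieD_8} \classB\OO_{24}$ of the two maps out of $\classB\LieD_8$, observe that the commuting square supplies a comparison map $\phi : P \to \classB\SO(3)$, and prove that $\phi$ is an isomorphism on $\mmod 2$ cohomology (equivalently, an $\FF_2$-homology equivalence); by Bousfield--Kan this upgrades to $P^{\wedge}_2 \simeq \classB\SO(3)^{\wedge}_2$, which is the precise content of ``seen by $\mmod 2$ cohomology as a homotopy pushout''. First I would pin down the subgroup geometry that makes the square commute: taking the maximal torus $T = \SO(2)$ to be rotations about an axis through a pair of antipodal vertices of the octahedron, one has $\normN T = \OO(2)$, and the stabiliser in the octahedral group $\OO_{24}$ of that axis is exactly $\LieD_8 = \OO(2) \cap \OO_{24}$; this realises $\LieD_8$ as the common subgroup and explains why these four groups are forced upon us. Note that $\Weyl_G = \OO(2)/\SO(2)$ has order $2 = p$, so Borel's theorem above does \emph{not} apply and the single map $\classB\normN T \to \classB\SO(3)$ fails to be a $\mmod 2$ equivalence; the role of $\classB\OO_{24}$ is to repair this failure.

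Next I would assemble the four cohomology rings. Here $\coHom^*(\classB\SO(3);\FF_2) = \FF_2[w_2,w_3]$ and $\coHom^*(\classB\OO(2);\FF_2) = \FF_2[w_1,w_2]$, with the restriction along $\OO(2) \hookrightarrow \SO(3)$ governed by the splitting of the defining $3$-dimensional representation as (standard $2$-dimensional) $\oplus\,(\det)$, giving $w_2 \mapsto w_2 + w_1^2$ and $w_3 \mapsto w_1 w_2$ (the symbols on the right being the generators of $\coHom^*\classB\OO(2)$). The group $\LieD_8$ has $\coHom^*(\classB\LieD_8;\FF_2) = \FF_2[x,y,w]/(xy)$ with $|x|=|y|=1$, $|w|=2$, while $\coHom^*(\classB\OO_{24};\FF_2)$ is the known cohomology of the octahedral group, whose restriction to the normal Klein four-group $V \cong (\ZZ/2)^2$ lands in the Dickson algebra $\FF_2[u,v]^{\GL_2(\FF_2)} = \FF_2[c_2,c_3]$ (degrees $2$ and $3$). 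With these in hand I would run the Mayer--Vietoris sequence of $P$ and reduce the claim to two statements: that the difference of restrictions $\coHom^*(\classB\OO(2)) \oplus \coHom^*(\classB\OO_{24}) \to \coHom^*(\classB\LieD_8)$ is surjective (so the sequence breaks into short exact pieces and $\coHom^*P$ is the fibre product), and that the fibre product $\coHom^*(\classB\OO(2)) \times_{\coHom^*(\classB\LieD_8)} \coHom^*(\classB\OO_{24})$ is carried isomorphically onto it by the restriction maps out of $\coHom^*(\classB\SO(3)) = \FF_2[w_2,w_3]$.

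The conceptual heart, and the step I expect to be the main obstacle, is proving that this fibre product really reproduces $\FF_2[w_2,w_3]$ with neither missing nor spurious classes. I would control this with Quillen's detection theorem: $\coHom^*(\classB\SO(3);\FF_2)$ is the inverse limit of $\coHom^*(\classB E;\FF_2)$ over the category of elementary abelian $2$-subgroups $E$, and for $\SO(3)$ this limit is exactly the Dickson algebra $\FF_2[c_2,c_3]$ arising from the maximal such subgroup $V \cong (\ZZ/2)^2$ together with its normalizer $\normN(V) = \OO_{24}$, which induces the full $\GL_2(\FF_2)$-action. The pushout is engineered so that its Quillen category agrees: the rank-one elementary abelians (the $\pi$-rotations) and the torus data are seen through $\classB\OO(2)$, the rank-two subgroup $V$ and its $\GL_2(\FF_2)$-symmetry through $\classB\OO_{24}$, and the overlap through $\classB\LieD_8$. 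This is exactly where the evenness of $\#\Weyl_G$ (hence $2 \mid \#\Weyl_G$) is used: the fusion of $V$ that is invisible to $\normN T$ alone must be supplied by $\OO_{24}$.

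Finally, to turn the $F$-isomorphism furnished by Quillen's theorem into the honest isomorphism demanded by Mayer--Vietoris, and to exclude phantom classes in $\coHom^*P$, I would invoke Lannes' $T$-functor together with Miller's solution of the Sullivan conjecture: these compute $\operatorname{map}(\classB E, \classB\SO(3))$ and the corresponding mapping spaces out of $P$, showing that $\phi$ induces equivalences on all such mapping spaces and hence, by a Dwyer--Zabrodsky/Bousfield--Kan argument, a $\mmod 2$ cohomology isomorphism. The explicit ring computations of the second paragraph then serve as the concrete low-degree verification that the abstract gluing leaves no room for error, completing the identification $P^{\wedge}_2 \simeq \classB\SO(3)^{\wedge}_2$.
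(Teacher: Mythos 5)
First, a point of order: the paper does not prove this theorem at all --- it quotes it from Dwyer--Miller--Wilkerson \cite{DMW1987} purely as motivation for the Jackowski--McClure decomposition --- so your attempt can only be measured against the standard proof in the literature. Judged that way, your skeleton is the right one (form the honest pushout $P$, compare with $\classB\SO(3)$ by Mayer--Vietoris, feed in explicit rings), and your preparatory facts are all correct: $\LieD_8 = \OO(2)\cap\OO_{24}$, $\normN(V)=\OO_{24}$ with $\normN(V)/V\cong\GL_2(\FF_2)$, $\coHom^*(\classB\SO(3);\FF_2)=\FF_2[w_2,w_3]$, $\coHom^*(\classB\OO(2);\FF_2)=\FF_2[w_1,w_2]$ with $w_2\mapsto w_1^2+w_2$, $w_3\mapsto w_1w_2$, and $\coHom^*(\classB\LieD_8;\FF_2)=\FF_2[x,y,w]/(xy)$.

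The genuine gap is that the two statements you yourself isolate as the content of the theorem --- surjectivity of the difference map into $\coHom^*(\classB\LieD_8)$, and identification of the fibre product with $\FF_2[w_2,w_3]$ --- are never actually proved. For the fibre product you assert that ``the pushout is engineered so that its Quillen category agrees''; that is not an argument but a restatement of the theorem: showing that the fusion generated by $\OO(2)$ and $\OO_{24}$ over $\LieD_8$ exhausts the $2$-fusion of $\SO(3)$, with no spurious classes surviving, is exactly what has to be established. The final paragraph then runs the logic backwards: Lannes/Miller/Dwyer--Zabrodsky technology lets one pass from a $\mmod 2$ cohomology isomorphism to conclusions about mapping spaces after $2$-completion (via Bousfield--Kan), not from mapping-space equivalences to a computation of $\coHom^*P$; using mapping spaces to compute $\coHom^*P$ would presuppose a homology decomposition of precisely the kind being proved. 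The repair is more elementary than the machinery you invoke. Both restrictions into $\coHom^*(\classB\LieD_8)$ are injective: the one from $\OO(2)$ sends $w_1\mapsto x+y$, $w_2\mapsto w$, and is injective because the elements $(x^i+y^i)w^j$ are linearly independent in $\FF_2[x,y,w]/(xy)$; the one from $\OO_{24}$ is injective onto the stable elements because $\LieD_8$ is a Sylow $2$-subgroup of $\OO_{24}$. Hence the fibre product is literally the intersection of two explicitly known subalgebras of $\FF_2[x,y,w]/(xy)$, and both required statements (the intersection equals the image of $\FF_2[w_2,w_3]$, and the two images jointly span) reduce to a finite monomial/Poincar\'e-series computation; that computation, plus Bousfield--Kan to convert the cohomology isomorphism into the $2$-complete homotopy statement, is the actual proof, and Quillen's theorem is needed only for the pleasant identification of $\coHom^*(\classB\SO(3))$ with the Dickson algebra $\FF_2[c_2,c_3]$, where for $\SO(3)$ the Quillen map is an honest isomorphism because the $3$-dimensional representation restricts to $V$ as the sum of the three nontrivial characters.
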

 
Now take any prime number $p$. Let $\catA_p (G)$ be the category of nontrivial elementary abelian $p$-subgroups of $G$ 
(there are only finitely many up to conjugacy) where the morphisms are restrictions of inner automorphisms of $G$.   
Let $C(E)$ be the centralizer of $E \in \catA_p (G)$ in $G$. In fact,  $\BC(E)$ is homotopy equivalent to a functor in the variable $E \in \catA_p (G)$.
 
\begin{Theorem}[Jackowski--McClure \cite{JackMcC1992}]
The map
 \[
 \hocolim_{E \in \catA_p(G)^{\op}} \BC(E) \longto \classB G
 \]
induces an isomorphism on $\mmod p$ cohomology.
 \end{Theorem}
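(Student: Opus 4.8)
The plan is to compare the $\mmod p$ cohomology of the two sides through the Bousfield--Kan spectral sequence of the homotopy colimit. Write $\catA = \catA_p(G)$ for brevity. A morphism $E \to E'$ in $\catA$ enlarges the subgroup up to conjugacy, hence shrinks the centralizer, giving $\BC(E') \to \BC(E)$; so $E \mapsto \BC(E)$ is a contravariant diagram, which is why the colimit is taken over $\catA^{\op}$. The spectral sequence, whose $E_2$-term is the higher derived functors of the inverse limit of the cohomology of this diagram, reads
\[
E_2^{s,t} = {\varprojlim_{\catA^{\op}}}^{s}\, \coHom^t(\BC(-);\FF_p)
\ \Longrightarrow\
\coHom^{s+t}\bigl(\hocolim_{\catA^{\op}}\BC;\FF_p\bigr).
\]
The inclusions $C(E) \leq G$ induce compatible maps $\BC(E) \to \classB G$, i.e.\ a map from the diagram to the constant diagram at $\classB G$, and this is what induces the map in the statement. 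Comparing with the collapsing spectral sequence of the constant diagram, the theorem reduces to two assertions: (a) the restriction map $\coHom^*(\classB G;\FF_p) \to \varprojlim_{\catA^{\op}}\coHom^*(\BC(-);\FF_p)$ is an isomorphism, and (b) the higher derived limits ${\varprojlim}^{s}$ vanish for all $s>0$.

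The bridge between centralizers and cohomology is the unstable-algebra machinery surrounding the Sullivan conjecture. For an elementary abelian $p$-group $E$, Lannes' functor $T_E$ computes the $\mmod p$ cohomology of the mapping space $\operatorname{map}(\classB E,\classB G)$, while the Dwyer--Zabrodsky--Notbohm theorem identifies the component of $\operatorname{map}(\classB E,\classB G)^{\wedge}_p$ indexed by the inclusion with $\BC(E)^{\wedge}_p$. Thus $\coHom^*(\BC(E);\FF_p)$ is recovered functorially from $\coHom^*(\classB G;\FF_p)$ by applying $T_E$, which both pins the diagram down algebraically and, via the detection of $\mmod p$ cohomology on elementary abelian subgroups (Quillen, refined by Lannes), exhibits $\coHom^*(\classB G;\FF_p)$ as the ring of \emph{stable elements} over $\catA$. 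So I would first establish (a) by identifying $\coHom^*(\classB G;\FF_p)$ with this inverse limit.

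The main obstacle is (b), the statement that the decomposition is \emph{sharp}. Here I would exploit the exactness of Lannes' $T$-functor and its compatibility with inverse limits to reduce the computation of ${\varprojlim}^{s}\,\coHom^t(\BC(-);\FF_p)$ over $\catA$ to a purely algebraic calculation in unstable algebras, and then invoke the Miller--Lannes vanishing theorems to kill the derived functors in positive degrees. This acyclicity is where the bulk of the Jackowski--McClure argument lies, and is considerably more delicate than the identification of the inverse limit: controlling it requires a careful analysis of the structure of the category $\catA_p(G)$ and of the conjugation action, which is why the hypothesis that its morphisms are restrictions of inner automorphisms of $G$ is essential. Once (a) and (b) are in hand, the spectral sequence collapses onto its edge and the comparison map is a $\mmod p$ cohomology isomorphism, as claimed.
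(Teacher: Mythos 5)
Your proposal has exactly the same skeleton as the paper's outline: the Bousfield--Kan spectral sequence of the homotopy colimit, reducing the theorem to the identification $\coHom^*(\classB G;\FF_p)\cong\lim \coHom^*(\BC(-);\FF_p)$ together with the vanishing of the higher derived limits ${\lim}^{s}$ for $s>0$ --- this is the paper's Step~1 verbatim. Where you genuinely diverge is in how those two assertions are to be proved. The paper, following Jackowski--McClure, runs both through Mackey functor theory: the functor $E\mapsto\coHom^j(\BC(E);\ZZ/p)$ on $\catA_p(G)^{\op}$ is \emph{proto-Mackey} (it extends to a Mackey functor on $\Spn\ffam\catA_p(G)$); the acyclicity then follows from the general statement that \emph{every} proto-Mackey functor into $\Mod_{\ZZ/p}$ is acyclic, proved by transfer--restriction arguments modelled on the elementary lemma that $\coHom^i(\Gamma;M)=0$ for $i>0$ when multiplication by $\#\Gamma$ is invertible; and the limit identification imitates Dress's induction theorem. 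You instead invoke Lannes' $T$-functor, Dwyer--Zabrodsky--Notbohm, and Miller-type injectivity --- the unstable-homotopy route later systematized by Dwyer--Wilkerson. That route does work, so this is a legitimate alternative rather than a gap, but the two approaches buy different things: the Mackey route (the very reason this theorem appears in the paper's section on Mackey functors) stays inside the algebraic framework ${\lim}^i=\Ext^i_{[\catB^{\op},\Ab]}(\Delta\ZZ,-)$ and makes the vanishing an index argument once the topological transfers are constructed, while your route imports the heavy machinery surrounding the Sullivan conjecture but in exchange reconstructs the whole diagram functorially from $\coHom^*(\classB G;\FF_p)$ and generalizes beyond compact Lie groups (e.g.\ to $p$-compact groups). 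One caution on wording: $T_E$ is a left adjoint, so ``compatibility with inverse limits'' is not the property you can use; the actual mechanism for killing ${\lim}^{s}$ is the exactness of $T_E$ combined with the injectivity of $\coHom^*(\classB E;\FF_p)$ (and its tensor variants) as unstable modules, which turns the higher limits into Ext groups that vanish.
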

\begin{proof}[Outline of proof:] 
\begin{step} There is a standard spectral sequence associated with the cohomology of a homotopy colimit.   
As usual, to actually use a spectral sequence we need some collapsing.   
This involves proving:
 \begin{enumerate}[(a)]
\item 
(acyclicity) \quad $\lim_{E \in {\catA_{p}(G)}^{\op}}^{i} \coHom^{j}\bigl(\BC(E)\bigr) = 0$ for all $j$ and all $i>0$.
\item 
$ \coHom^\ast (\classB G) \cong \lim_{E \in \catA_p (G)^{\op}} \coHom^\ast \bigl(\BC(E)\bigr)$.
\end{enumerate} 
Here $\lim^i$ is the \emph{$i$-th derived functor} of
 \[
 \lim : [\catA_p (G) ,\Ab \,] \longto\Ab \,.
 \]
 For a functor $M : \catB^{\op}  \to\Ab  $, we have
 \[
 \lim M = [\catB^{\op}  ,\Ab \,] (\Delta \ZZ , M)
 \]
(where $\Delta \ZZ$ is the constant functor at $\ZZ$) so the $i$-th derived functor of $\lim$ is
 \[
 \coHom^i (\catB;M) := \Ext ^i_{[\catB^{\op},\Ab \,]} (\Delta \ZZ , M)\,.
 \]
 Call $M$ \emph{acyclic} when $ \coHom^i (\catB,M)=0$ for all $i>0$.
\end{step}

\begin{step}
A functor $M : \catB^{\op} \to\Ab  $ is said to be \emph{proto-Mackey} 
when its extension $M_+ : (\ffam \catB)^{\op} \to\Ab  $ extends further to a Mackey functor
 \[
 \MM : \Spn  \ffam \catB \longto\Ab \,.
 \]
\end{step}
 
\begin {Observation}
 $ \coHom^i (\catB;M) \cong  \coHom^i (\ffam  \catB; M_+)$\\ 
It is proved in~\cite{JackMcC1992} that $\ffam \catA_p(G)$ is lextensive enough for Mackey functors to be useful and that
 \[
 M(E) = \coHom^j (\BC(E);\ZZ /p)
 \]
is a proto-Mackey functor on $\catA_p (G)$.
\end {Observation}
For this $M$, it remains to prove:
\begin{inparaenum}[(a)]
\item 
$M$ is acyclic, and
\item 
$\coHom^j (\classB G ; \ZZ /p) \cong \lim M$.
\end{inparaenum}
The proof of (b) imitates the induction theorem of Dress~\cite{Dress1973} in Mackey functor theory.

\begin{step}
The proof of (a) is achieved by showing 
\begin{enumerate}[(a$'$)] 
\item \itshape 
every proto-Mackey functor
 $ \catA_p (G)^{\op}  \longto \Mod_{\ZZ /p} $
is acyclic.
\end{enumerate}

\noindent
Two simple lemmas below inspire the proof of (a$'$).
\end{step}
\end{proof}
\begin{Lemma}
If $\catC$ has a terminal object, every functor $M : \catC^{\op}  \longto\Ab  $ is acyclic.
\end{Lemma}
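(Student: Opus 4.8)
The plan is to show that the constant functor $\Delta\ZZ$ is \emph{projective} in the abelian category $[\catC^{\op},\Ab\,]$, which immediately forces $\coHom^i(\catC;M)=\Ext^i_{[\catC^{\op},\Ab\,]}(\Delta\ZZ,M)=0$ for all $i>0$ and all $M$. The point is that a terminal object controls the entire diagram.

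First I would record the standard fact that for each object $c$ of $\catC$ the representable free functor $\ZZ\cdot\catC(-,c)$ is projective: combining the Yoneda lemma with the free-abelian-group adjunction gives
\[
[\catC^{\op},\Ab\,]\bigl(\ZZ\cdot\catC(-,c),M\bigr)\;\cong\;M(c)=\ev_c(M),
\]
and evaluation $\ev_c$ is exact because kernels and cokernels in the functor category are computed pointwise. Thus $[\catC^{\op},\Ab\,](\ZZ\cdot\catC(-,c),-)$ is exact, i.e. $\ZZ\cdot\catC(-,c)$ is projective.

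Next comes the key observation. If $t$ is a terminal object of $\catC$, then $\catC(c,t)$ is a one-point set for every $c$, so the presheaf $\catC(-,t)$ is the terminal presheaf and $\ZZ\cdot\catC(-,t)\cong\Delta\ZZ$. Hence $\Delta\ZZ$ is isomorphic to a representable free functor and is therefore projective by the previous step, and acyclicity of every $M$ follows at once. Equivalently, and perhaps more transparently, one can argue at the level of $\lim$ itself: since $t$ is terminal in $\catC$ it is initial in $\catC^{\op}$, and the limit of a diagram indexed by a category with an initial object is the value at that object, so $\lim M\cong M(t)=\ev_t(M)$ naturally in $M$. As $\ev_t$ is exact, its right derived functors vanish in positive degrees, giving $R^i\lim=0$ for $i>0$, which is exactly the asserted acyclicity.

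There is essentially no obstacle here: the only thing requiring care is the variance bookkeeping---checking that it is a \emph{terminal} object of $\catC$ that makes $\catC(-,t)$ constant (rather than an initial object), and correspondingly that such $t$ is \emph{initial} in $\catC^{\op}$ so that the limit collapses to evaluation at $t$. Everything else reduces to the routine exactness of evaluation functors on a presheaf category.
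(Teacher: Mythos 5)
Your proposal is correct and is essentially the paper's own argument: the paper's proof is exactly your second formulation, namely that $\lim M \cong M(1)$ (evaluation at the terminal object) is exact in $M$, so the derived functors of $\lim$ vanish. Your primary framing via projectivity of $\Delta\ZZ \cong \ZZ\cdot\catC(-,t)$ is the same fact in different clothing, since projectivity of $\Delta\ZZ$ means precisely that $\Hom(\Delta\ZZ,-)=\lim$ is exact.
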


\begin{proof}
$\lim M = M1$ is exact in $M$ and so the derived functors of $\lim$ vanish. 
\end{proof}

\begin{Lemma}
Suppose  $\Gamma$  is a finite group and $M$ is a $\Gamma$-module. 
If multiplication by~$\# \Gamma$ as a function $M \to M$ is invertible, then $\coHom^i (\Gamma;M)=0$ for all $i >0$.
\end{Lemma}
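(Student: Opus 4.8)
The plan is to reduce to ordinary group cohomology and then play two facts about multiplication by $\#\Gamma$ against each other. First I would note that a group $\Gamma$ is a one-object groupoid, so $[\Gamma^{\op},\Ab\,]$ is the category of $\ZZ\Gamma$-modules and
\[
\coHom^i(\Gamma;M) = \Ext^i_{[\Gamma^{\op},\Ab\,]}(\Delta\ZZ,M) = \Ext^i_{\ZZ\Gamma}(\ZZ,M)
\]
is the usual cohomology $\coHom^i(\Gamma;M)$; in particular it may be computed from any projective resolution of $\ZZ$ over $\ZZ\Gamma$, such as the bar resolution.

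Next I would exploit functoriality. Writing $\mu\colon M\to M$ for multiplication by $\#\Gamma$, the integer $\#\Gamma$ is central, so $\mu$ is a $\Gamma$-module endomorphism, and additivity of $\Ext$ in its second argument makes the induced map again multiplication by $\#\Gamma$ on $\coHom^i(\Gamma;M)$. By hypothesis $\mu$ has a (necessarily $\Gamma$-linear) inverse $\nu$; applying the functor $\coHom^i(\Gamma;-)$ to $\mu\nu=\nu\mu=1$ shows that multiplication by $\#\Gamma$ is invertible on $\coHom^i(\Gamma;M)$ for every $i$.

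On the other hand, for $i>0$ multiplication by $\#\Gamma$ kills $\coHom^i(\Gamma;M)$. I would see this through the restriction--transfer composite for the trivial subgroup $\{e\}\leq\Gamma$,
\[
\coHom^i(\Gamma;M) \xrightarrow{\ \Res\ } \coHom^i(\{e\};M) \xrightarrow{\ \Tr\ } \coHom^i(\Gamma;M),
\]
which equals multiplication by the index $[\Gamma:\{e\}]=\#\Gamma$. The one-morphism groupoid $\{e\}$ has a terminal object, so by the preceding lemma $\coHom^i(\{e\};M)=0$ for $i>0$; hence the composite, and therefore multiplication by $\#\Gamma$, is zero on $\coHom^i(\Gamma;M)$.

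Combining the two, for $i>0$ the endomorphism ``multiplication by $\#\Gamma$'' of $\coHom^i(\Gamma;M)$ is at once invertible and zero, whence $\coHom^i(\Gamma;M)=0$. The one genuinely delicate step is the annihilation claim: rather than invoking the transfer formalism one could instead write down an explicit averaging homotopy on the bar cochains, sending a cocycle $f$ to $(hf)(g_1,\dots,g_{i-1})=\sum_{g\in\Gamma}f(g_1,\dots,g_{i-1},g)$ and checking that $\delta(hf)=\#\Gamma\cdot f$ up to sign when $\delta f=0$ (for $i=1$ one computes $\delta(hf)=-\#\Gamma\cdot f$ directly from the cocycle identity). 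Keeping track of the signs in the coboundary is the only real bookkeeping.
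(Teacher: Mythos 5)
Your proof is correct, but it takes a genuinely different route from the paper's. The paper argues at the level of modules rather than of cohomology groups: from the adjunction $U \dashv \Set(\Gamma,-)$ between the forgetful functor $U : [\Gamma^{\op},\Ab\,] \longto \Ab$ and coinduction, it gets $[\Gamma^{\op},\Ab\,]\bigl(\Delta\ZZ,\Set(\Gamma,N)\bigr) \cong \Ab(\ZZ,N) \cong N$, which is exact in $N$, so every coinduced module $\Set(\Gamma,N)$ is acyclic; then the composite $M \to \Set(\Gamma,M)\to M$ of the canonical maps is multiplication by $\#\Gamma$, so invertibility of the latter exhibits $M$ as a retract of the acyclic module $\Set(\Gamma,M)$, and additivity of $\coHom^i(\Gamma;-)$ forces $\coHom^i(\Gamma;M)$ to be a retract of $0$. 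Your argument is essentially the Shapiro-lemma translation of this: your identity $\Tr\circ\Res = \#\Gamma$ on cohomology is the image of the paper's module-level composite, and the vanishing of $\coHom^i(\{e\};M)$ (which you correctly deduce from the terminal-object lemma) corresponds to the acyclicity of $\Set(\Gamma,M)$. As for what each organization buys: yours plays invertibility against annihilation on the cohomology groups themselves, so it must either import the transfer formalism as a black box or fall back on your explicit averaging homotopy --- which does work in every degree, giving $\delta(hf) = (-1)^i\,\#\Gamma\, f$ for an $i$-cocycle $f$, consistent with your sign check at $i=1$; the paper's version needs no transfer machinery and never considers the induced endomorphism of cohomology (the retract does all the work), and it is phrased in a shape --- adjunction plus retraction inside the functor category --- that generalizes transparently to the setting this lemma is meant to inspire, namely acyclicity of proto-Mackey functors $\catA_p(G)^{\op}\to\Mod_{\ZZ/p}$, where the Mackey structure supplies the analogous retraction.
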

 
\begin{proof}
$\Delta\ZZ  \in \xymatrix{
 [\Gamma^{\op},\Ab\,] \ar@<1ex>[rr]^-{U} &&\Ab   \ar@<1ex>[ll]_-{\perp}^-{\Set (\Gamma,-)} 
} \ni N$.
\[
 N \cong \Ab(\ZZ ,N) \cong [\Gamma^{\op},\Ab \,] (\Delta \ZZ , \Set ) (\Gamma,N) 
 \text{ implies } \coHom^i (\Gamma; \Set (\Gamma,N)) \cong 0 \text{ for } i>0\,.
 \]  
But $M$ is a retract of $\Set (\Gamma,M)\,$:
$
 \xymatrix{
  M  \ar[r]    \ar@/_2pc/[rr]_{\text{mult. by $\# \Gamma$}} &\Set  (\Gamma,M) \ar[r] & M 
   } 
$. 
\end{proof}

\section{Monoidal category theory for manifold invariants\label{Lecture4}}

\subsection{Preliminary remarks}

In Section~\ref{Lecture1}, I outlined how monoidal categories are relevant to braids and links.  
Work appearing in the period 1985--90 by Jones, Yetter, Jimbo, Drinfeld, Witten and Atiyah 
connected quantum field theories, low-dimensional topology and categories. 
This led, in the next few years, to invariants for 3-manifolds $M$ where the input was a special kind of monoidal category.
 
The Reshetikhin--Turaev invariant $\RT_\catB (M)$ constructed in~\cite{ReshTur1991} is based on a \emph{modular category} $\catB$ : that is, $\catB$ is a semisimple $\Vect $-enriched tortile monoidal category, with finitely many simple objects, and subject to a non-degeneracy condition.  One presents the manifold $M$ by surgery along a framed link and the link is coloured by simple objects of $\catB$.  

The Turaev--Viro invariant $\TV_\catC(M)$ constructed in~\cite{TuraevYa1992} and~\cite{BarWest1996} is based on a \emph{spherical category} $\catC$: that is, a pivotal monoidal category for which the two naturally defined traces agree.   
Also $\catC$ must be semisimple $\Vect$-enriched with finitely many simple objects, and subject to invertibility of dim $\catC$. 
The important difference is that $\catC$ need not be braided.   
One presents $M$ by a triangulation, colours the edges by simple objects of $\catC$ and evaluates coloured tetrahedra according to the $6j$-symbols of $\catC$. 
 
 Modular implies spherical, and it was shown that
 \[
 \TV_\catB (M) = \RT_\catB (M) \RT_\catB (-M)\,.
 \]
M\"uger~\cite{Muger2003a},~\cite{Muger2003b} proved that such spherical $\catC$ have their centres $\catZ(\catC)$ modular.   
Turaev conjectured generally that
 \[
 \TV_\catC (M) = \RT_{\catZ(\catC)} (M)
 \]
and some special cases were proved by Kawahigashi--Sato--Wakui~\cite{KSW2005}. 
The conjecture is proved by Turaev--Virelizier~\cite{TurVirez} using ideas of Brugui\'eres--Virelizier~\cite{BrugVirel2008} which stemmed from a generalization of the $\RT$-invariant due to Lyubashenko~\cite{Lyub1995}. 
Lyubashenko only requires a tortile monoidal category $\catB$ for which the ``cocentre" 
 \[
 C = \int^{x\in\catB} X \otimes X^\ast
 \]
 exists. Then $C$ is a Hopf monoid in $\catB$ equipped with a Hopf pairing $\omega : C \otimes C \to I$.
 
The problem comes in trying to apply this in the case $\catB = \catZ(\catC)$ for suitable spherical $\catC$.   
It is not known how to obtain the simple objects in $\catZ(\catC)$ from those in $\catC$, for example.    
What is needed is a way of calculating the cocentre of $\catB = \catZ(\catC)$.

 If $\catC$ is braided and has a cocentre $A$ then $\catZ(\catC)$ is the category of $A$-modules.   
 The Hopf monoid structure on $A$ provides the endofunctor
 \[
 A\otimes -: \catC \longto \catC
 \]
 with the structure of opmonoidal monad.   
 The notion of opmonoidal monad makes sense on any monoidal category $\catC$, braided or not; 
 whereas bimonoid in $\catC$ requires a braiding (or the like).
 
This led Brugui\`eres--Virelizier~\cite{BrugVirel2007} to develop the ideas of Moerdijk~\cite{Moerdijk2002} concerning opmonoidal monads. 
They developed the notion of Hopf monad on an autonomous monoidal category in the sense of an opmonoidal monad with antipode.   
Then, with Lack~\cite{BrugVirelLack}, this was extended to arbitrary monoidal categories. 
 
In a different direction, the monoidal centre of a modular category was also studied by Fr\"olich--Fuchs--Runkel--Schweigert~\cite{FFRS2006} and Kong--Runkel~\cite{KongRunkel}. 
It was realized that Morita equivalence classes of monoids in the chiral modular category of a Rational Conformal Field Theory correspond to consistent sets of boundary conditions. 
Certain commutative monoids in the monoidal centre describe the RCFT in ``full''.   
A construction was produced which assigned to a monoid $A$ in a modular category $\catB$, a commutative monoid $zA$ in the monoidal centre $\catZ(\catB)$ of $\catB$; 
they call $zA$ the \emph{full centre} of $A$. 
Recently, Davydov~\cite{Davydov2010} generalized the notion of full centre to any monoidal category $\catB$ by a characterizing universal property.
 
In studying the Brugui\`eres--Virelizier work, I was led to look at examples of ``2-monoidal categories'' 
in the sense of Aguiar--Mahajan~\cite{AguiarMahajan}. 
These are pseudomonoids (``monoidales'') in the monoidal 2-category of monoidal categories and monoidal functors.   
I call them \emph{duoidal categories}.  
Recall from~\cite{BTC} that a pseudomonoid in the monoidal 2-category of monoidal categories and \emph{strong} monoidal functors is essentially a braided monoidal category: 
by an Eckmann--Hilton argument the two tensor products are forced to be isomorphic and a braiding appears.

My treatment here of the category theory inspired by the 3-manifold invariants is joint work with Brian Day.
In fact, his paper~\cite{DayMid4} is also relevant to duoidal categories. 
 
In studying the full centre of Davydov, I was led to pseudomonoids in a different monoidal 2-category.  
I am grateful to Michael Batanin for further developments on  this topic.   
Coincidentally also, Batanin's work~\cite{Batanin2011a},~\cite{BatMarkl} with Martin Markl has led to duoidal categories. 
Finally, I might point to a paper~\cite{BookSt} which has arisen since these lectures were delivered 
and continues the study of duoidal categories in the same spirit.

\subsection{Duoidal categories}
 
Special cases of duoidal category have appeared in the literature. 
The general notion is called  \emph{2-monoidal category} by Aguiar--Mahajan~\cite{AguiarMahajan}. 
It also has importance in the work of Batanin--Markl~\cite{Batanin2011a},~\cite{BatMarkl}. 
 
Suppose $\catV$ is a cosmos in the sense of B\'enabou: 
that is, a symmetric monoidal closed complete and cocomplete category.   
A \emph{duoidal} $\catV$-\emph{category} is a $\catV$-category $\catF$ equipped with two monoidal $\catV$-category structures:
 \[
 \ast  : \catF \otimes \catF \longto \catF \text{,  } J \in \catF
 \quad\text{and} \quad
  \circ : \catF \otimes \catF \longto \catF  \text{,  } \catI \in \catF\,,
 \]
 such that the second structure is monoidal with respect to the first.
 Explicitly, $\circ : \catF \otimes \catF \longto \catF$ and $\catI  : \scrI \longto \catF$ are monoidal functors 
 where $\catF$ is monoidal via $\ast$ and $J$. 
 Apart from the two lots of associativity and unit constraints, the extra structure involves a middle-of-four-interchange morphism
 \[
 \gamma : (A \circ B) \ast (C \circ D) \longto (A \ast C) \circ (B \ast D)
 \]
 and morphisms
\[
\catI \ast \catI \stackrel{\mu}{\longto} \catI \stackrel{\tau}{\longto} J \stackrel{\delta}{\longto} J \circ J\,.
\]
In particular, $\catI$ is a monoid in $(\catF,\ast,J)$ while $J$ is a comonoid in $(\catF,\circ,\catI)$.

\subsection{Examples of duoidal categories}

\begin{example}\label{DuoEx1}%
$\catF$ = Rep, $\ast = \bullet$ (Cauchy product), $\circ = \underline{\times}$ (Hadamard product).
\end{example} 

\begin{example}\label{DuoEx2}%
$(\catA, \otimes)$ any monoidal category  with finite coproducts: $\catF = \catA$, $\ast = +$, $\circ = \otimes$.
\end{example} 

\begin{example}\label{DuoEx3}%
If $(\catA, \otimes)$ is a braided monoidal category: $\catF = \catA$, $\ast = \otimes$, $\circ = \otimes$.
\end{example} 

\begin{example}\label{DuoEx4}%
If $(\catF,\ast,\circ)$ is duoidal then so are $(\catF^{\op},\circ,\ast)$, $(\catF,\ast,\circ^{\rev})$ and $(\catF,\ast^{\rev},\circ)$.
\end{example} 

\begin{example}[Day--Street] \label{DuoEx5}%
If $\catC$ is any monoidal $\catV$-category: $\catF = [\,\catC^{\op} \otimes \catC,\catV\,]$, 
$\ast$ is convolution using the monoidal structure on $\catC^{\op} \otimes \catC$, 
and $\circ$ is ``tensor product over $\catC\,$''.   
More explicitly, 
\begin{align*}
\catF (M \ast N,L) &\cong \int_{U,V,X,Y} \catV \bigl(M(U,X) \otimes N(V,Y),L(U\otimes V,X\otimes Y)\bigr)\,,
\\
\catF (M\circ N, L) &\cong \int_{A,B,Z} \catV \bigl(N(Z,B) \otimes M(A,Z),L(A,B)\bigr)\,.
\end{align*}
Also
\[
J(A,B) = \catC (A,I) \otimes \catC (I,B) \quad\text{and} \quad
 \catI (A,B) = \catC (A,B)\,.
\]
The isomorphisms defining $M\ast N$ and $M\circ N$ are induced by morphisms
\begin{align*}
\xi &: M(U,X) \otimes N(V,Y) \longto (M \ast N) (U\otimes V, X \otimes Y)
\\
\zeta &: N(Z,B) \otimes M(A,Z) \longto (M\circ N) (A,B)\,.
\end{align*}
The middle-of-four morphism
\[
\gamma : (M \circ N) \ast (M' \circ N') \longto (M \ast M') \circ (N\ast N')
\]
is induced by the composite
\begin{align*}
\lefteqn{N(H,X)\otimes M(U,H) \otimes N'(K,Y) \otimes M'(V,K)}
\\
&&& \xymatrix{\ar[r]^-{\cong}&}
N(H,X) \otimes N'(K,Y) \otimes M(U,H) \otimes M'(V,K) 
\\
&&&\xymatrix{\ar[r]^-{\xi \otimes \xi}&}
(N\ast N') (H \otimes K, X \otimes Y) \otimes (M\ast M') (U \otimes V,H \otimes K) 
\\
&&&\xymatrix{\ar[r]^-{\zeta}&} 
 \bigl((M\ast M')\circ (N \ast N')\bigr) (U \otimes V, X \otimes Y)\,.
\end{align*}
\end{example} 

\begin{example}[Batanin--Markl \cite{BatMarkl}]\label{DuoEx6}%
Let $\catC$ be a category.  
Eilenberg and I used the term \emph{derivation scheme} on $\catC$ for a function $\XX$ which assigns, to each pair of parallel morphisms 
\[ \xymatrix{
  a \ar@/^1pc/[rr]^\alpha  \ar@/_1pc/[rr]_{\alpha'}  \ar@{}[rr] && b 
 }\]
in $\catC$, a set $\XX_{(\alpha,\alpha')}$.  The elements of this last set are depicted as 2-cells
\[ \xymatrix{
 a \ar@/^1pc/[rr]^\alpha  \ar@/_1pc/[rr]_{\alpha'}  \ar@{}[rr]|{\Downarrow \; x}&& b 
 }\]
Write $\Ds(\catC)$ for the category of derivation schemes on $\catC$.  
Then $\catF = \Ds(\catC)$ is duoidal with
\begin{align*}
(\XX \ast \YY)_{(\alpha,\alpha')}  &=  \biggl\{  
 \xymatrix{a \ar@/^1pc/[rr]^\beta  \ar@/_1pc/[rr]_{\beta'}  \ar@{}[rr]|{\Downarrow \; x}&& c  \ar@/^1pc/[rr]^\gamma 
\ar@/_1pc/[rr]_{\gamma'} \ar@{}[rr]|{\Downarrow \; y}&& b}  
\biggm | \begin{array}{lll}
x \in \XX ,  &  y \in \YY,    \\
\alpha = \gamma \beta ,  &  \alpha' = \gamma' \beta'   
\end{array}
\biggr\}\\
J_{(\alpha,\alpha')} &= \begin{cases}
               1 & \text{when } \alpha = \alpha' = 1_a \\
\emptyset & \text{otherwise}
\end{cases} \\
(\XX \circ \YY)_{(\alpha,\alpha'')}  &=  \biggl\{
 \xymatrix{\ar[rr]|{\alpha'} a \ar@/^1.5pc/[rr]^{\alpha}_{\Downarrow \; x}  \ar@/_1.5pc/[rr]_{\alpha''}^{\Downarrow \; y} & & b 
 }  
\biggm|
  x \in \XX ,    y \in \YY 
\biggr\} \\
\catI_{(\alpha,\alpha'')} & = \begin{cases}
                1 & \text{when } \alpha = \alpha' \\
\emptyset & \text{otherwise}.
\end{cases}
\end{align*}
The morphism
\[
\gamma : (\XX \circ \XX') \ast (\YY \circ \XX') \longto (\XX \ast \YY) \circ (\XX' \ast \YY')
\]
is defined by
\[
  \xymatrix{
   \ar@/_3.5pc/[rrrr]_{\alpha''}  \ar@/^3.5pc/[rrrr]^{\alpha}\ar[rr]|{\beta'} 
   a   
      \ar@/^1.5pc/[rr]^{\beta}_{\Downarrow \; x}  \ar@/_1.5pc/[rr]_{\beta''}^{\Downarrow \; x'} 
   & & c 
      \ar[rr]|{\gamma'}   \ar@/^1.5pc/[rr]^{\gamma}_{\Downarrow \; y}  \ar@/_1.5pc/[rr]_{\gamma''}^{\Downarrow \; y'} 
   && b
 }  
 \quad \longmapsto \quad
\raise3.5cm\hbox{\xymatrix{
 &&\ar@{}[d]|{=} \\
 &&c\ar@{}[dd]|{=}\ar@/^0.5pc/[rrdd]^(.3){\gamma}  
  \ar@/_0.5pc/[rrdd]_{\gamma'} \ar@{}[rrdd]|{\Downarrow\, y}
 \\
 \\
 a
  \ar@/_8pc/[rrrr]_{\alpha''}  \ar@/^8pc/[rrrr]^\alpha  \ar@/^0.5pc/[rruu]^(.7)\beta 
  \ar@/_0.5pc/[rruu]_{\beta'} \ar@{}[rruu]|{\Downarrow\, x} \ar[rrrr]|{\alpha'} \ar@/^0.5pc/[rrdd]^{\beta'}  
  \ar@/_0.5pc/[rrdd]_(.7){\beta''}   \ar@{}[rrdd]|{\Downarrow\, x'} 
 &&& & b \rlap{\;.}
 \\ \\
 &&c\ar@{}[uu]|{=} \ar@/^0.5pc/[rruu]^{\gamma'}   \ar@/_0.5pc/[rruu]_(.3){\gamma''} \ar@{}[rruu]|{\Downarrow\, y'}
 \\
 &&\ar@{}[u]|{=}
 }}
\]
\end{example}

\subsection{Duoidal and bimonoidal functors}

Let $\Mon(\VCat)$ denote the monoidal 2-category whose objects are monoidal $\catV$-categories and whose morphisms are monoidal $\catV$-functors.   A duoidal $\catV$-category is precisely a pseudomonoid (monoidale) in $\Mon(\VCat)$.

\bigskip

A \emph{duoidal} $\catV$-\emph{functor} $\Phi : \catG \longto \catF$ is precisely a monoidal morphism in $\Mon(\VCat)$.  
It means that $\Phi$ is both a monoidal $\catV$-functor $(\catG,\ast) \longto (\catF,\ast)$ and a monoidal $\catV$-functor $(\catG,\circ) \longto (\catF,\circ)$ which are compatible in a straightforward way.   
As suggested by the Batanin--Markl example, we go ahead and call $\ast$ the \emph{horizontal} monoidal structure on $\catF$ and $\circ$ the \emph{vertical}.

\bigskip

A \emph{duoid} in $\catF$ is a duoidal $\catV$-functor $A  : \scrI \longto \catF$.   
That is, $A$ is both a horizontal and vertical monoid
\[
A \ast A \stackrel{\mu_h}{\longto} A\,, 
\quad J  \stackrel{\eta_h}{\longto} A\,, 
\quad A \circ A  \stackrel{\mu_v}{\longto} A\,,
\quad \catI \stackrel{\eta_v}{\longto} A
\]
in $\catF$ such that $\mu_v$ and $\eta_v$ are horizontal monoid morphisms.

\bigskip

A \emph{bimonoidal} ($\catV$-)\emph{functor} $\Theta : \catG \longto \catF$ is precisely an opmonoidal  morphism in $\Mon(\VCat)$.   It means that $\Theta$ is horizontally monoidal, vertically opmonoidal and subject to compatibilities.

\bigskip

A \emph{bimonoid} in $\catF$ is a bimonoidal functor $\scrI \longto \catF$.  
That is, it is an object $A$ in~$\catF$ equipped with both a horizontal monoid and a vertical comonoid structure
\[
A \ast A \stackrel{\mu_h}{\longto} A\,, 
\quad J  \stackrel{\eta_h}{\longto} A\,, 
\quad A \stackrel{\delta_v}{\longto} A \circ A\,, 
\quad A \stackrel{\varepsilon_v}{\longto} \catI 
\]
such that $\delta_v$ and $\varepsilon_v$ are horizontal monoid morphisms.

In a braided monoidal category (Example~\ref{DuoEx3}), 
a bimonoid is a bimonoid (or bialgebra) in the usual sense while a duoid is a commutative monoid.

In the Batanin--Markl case (Example~\ref{DuoEx6}), a duoid is precisely a \emph{2-category}.

\subsection{Categories enriched in a duoidal category}

For any duoidal ($\catV$-)category $\catF$, there is a monoidal 2-category $\FCat$
whose objects are categories with homs enriched in the monoidal category $(\catF,\ast,J)$,
and the morphisms are enriched functors.  
The vertical structure of $\catF$ is used for the tensor product of $\FCat$: 
if $\catA$ and $\catB$ are horizontally enriched categories then $\catA \circ \catB$ is defined by
\begin{align*}
\ob \catA \circ \catB &= \ob\catA \times \ob\catB
\\
(\catA \circ \catB) \bigl((A,B),(A',B')\bigr) &= \catA (A,A')   \circ \catB (B,B')\,.
\end{align*}
Hence we can speak of \emph{monoidal} $\catF$-categories; 
they are monoidal objects $(\catA,\otimes,I)$ in  $\FCat$.   
Batanin has pointed out that $\catA(I,I)$ is then a duoid:
\[
\catA(I,I) \ast \catA(I,I) \stackrel{\mu_h}{\longto} \catA(I,I)
\]
is composition in $\catA$ while $\mu_v$ is the composite
\[
\catA (I,I) \circ \catA(I,I) = (\catA \circ \catA) \bigl((I,I),(I,I)\bigr)
 \stackrel{\otimes}{\longto} \catA (I \otimes I, I \otimes I) \cong \catA (I,I)\,.
\]
(This generalizes the fact that $\catA(I,I)$ is a commutative monoid in $\catF$ 
when $\catF$ is a braided monoidal category as in Example~\ref{DuoEx3}).

In some interesting cases the monoidal 2-category $\FCat$ is \emph{closed}: write $[\catA,\catB\,]$ for the internal hom.   
Then $[\catA,\catA\,]$ is a monoidal $\catF$-category for any $\catF$-category $\catA$.   
It follows that
\[
\zz(\catA) = [\catA,\catA\,] (1_\catA,1_\catA)
\]
is a duoid in $\catF$; we might call it the \emph{centre of} $\catA$.    
In particular, the centre of a (horizontal) monoid $M$ in $\catF$ is a duoid $\zz(M)$ in $\catF$.

\subsection{The duoidal category of endomodules of a monoidal\\category}

We now focus attention on Example~\ref{DuoEx5} of a duoidal category:
\[
\catF = [\,\catC^{\op} \otimes \catC,\catV\,] \qquad \qquad (\catC \text{ a small monoidal } \catV\text{-category})
\]
\begin{align*}
 (M \ast N) (A,B) &= \int^{U,V,X,Y} \catC (A,U \otimes V) \otimes \catC (X \otimes Y,B) \otimes M (U,X) \otimes N (V,Y)
\\
 (M \circ N) (A,B) &= \int^Z N(Z,B) \otimes M (A,Z)\,.
\end{align*}
There is a fully faithful functor
\begin{align*}
(\ )_\ast : [\,\catC,\catC\,] &\longto \catF
\\T &\longmapsto  T_\ast
\\T_\ast (U,V) &= \catC (U,TV)
\end{align*}
which is strong monoidal with respect to composition $\circ\,$.   
Each $T_\ast$ has a right dual $T^\ast$ for the vertical structure on $\catF$. 

\begin{Observations}
\begin{enumerate}[1.]
\item 
An endofunctor $G$ of $\catC$ is a monoidal comonad if and only if $G_\ast$ is a bimonoid in $\catF$.
\item 
An endofunctor $T$ is a monoidal monad if and only if $T_\ast$ is a duoid in $\catF$. 
\end{enumerate}
\end{Observations}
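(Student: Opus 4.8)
The plan is to prove the Observations by leaning on the two structural facts already recorded for the embedding $(\ )_\ast\colon[\,\catC,\catC\,]\to\catF$, namely that it is fully faithful and strong monoidal for the vertical structure $\circ$, supplemented by a direct identification of horizontal $\ast$-monoid structures on $T_\ast$ with lax monoidal structures on the functor $T$. Since item~1 and item~2 are formally dual (exchange monoids with comonoids, and hence monads with comonads, duoids with bimonoids), I treat the monad case in detail and obtain the comonad case by dualising the vertical structure at the end.

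First I would dispose of the vertical structure. Because $(\ )_\ast$ is strong monoidal for $\circ$ there are coherent isomorphisms $(TT)_\ast\cong T_\ast\circ T_\ast$ and $(1_\catC)_\ast=\catI$, the latter since $(1_\catC)_\ast(U,V)=\catC(U,V)=\catI(U,V)$. Full faithfulness then yields a bijection between vertical monoid structures $(\mu_v\colon T_\ast\circ T_\ast\to T_\ast,\ \eta_v\colon\catI\to T_\ast)$ and monad structures $(\mu\colon TT\to T,\ \eta\colon 1_\catC\to T)$ on $T$, under which the $\circ$-associativity and $\circ$-unit axioms match the monad axioms term by term. This step is pure transport along a strong monoidal equivalence and needs no computation.

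Next I would identify horizontal $\ast$-monoid structures on $T_\ast$ with lax monoidal structures on $T$. Starting from the convolution formula and collapsing the coends over $U,V$ by the co-Yoneda lemma gives $(T_\ast\ast S_\ast)(A,B)\cong\int^{X,Y}\catC(A,TX\otimes SY)\otimes\catC(X\otimes Y,B)$. A further Yoneda argument in $A$ and a density argument in $B$ then show that natural transformations $T_\ast\ast T_\ast\to T_\ast$ are in bijection with natural families $TX\otimes TY\to T(X\otimes Y)$, and that morphisms $J\to T_\ast$ correspond to morphisms $I\to TI$. The two monoid axioms in $(\catF,\ast,J)$ translate into exactly the associativity and unit coherence of a lax monoidal functor, so horizontal monoid structures on $T_\ast$ are precisely lax monoidal structures on $T$.

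Finally, the duoid condition requires $\mu_v$ and $\eta_v$ to be horizontal monoid morphisms, where $\catI$ carries its canonical structure $\catI\ast\catI\to\catI$ and $T_\ast\circ T_\ast$ carries the $\ast$-monoid structure built from that of $T_\ast$ by the middle-of-four interchange $\gamma$ of $\catF$. Unwinding $\gamma$ through the explicit formula of Example~\ref{DuoEx5}, I expect the horizontal multiplication on $T_\ast\circ T_\ast$ to transport, under $(TT)_\ast\cong T_\ast\circ T_\ast$, to the composite lax structure $TTX\otimes TTY\to T(TX\otimes TY)\to TT(X\otimes Y)$ on $T^2$; granting this, the assertion that $\mu_v$ and $\eta_v$ are $\ast$-monoid morphisms becomes exactly the assertion that $\mu$ and $\eta$ are monoidal natural transformations, i.e.\ that $(T,\mu,\eta)$ is a monoidal monad. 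The main obstacle is precisely this last matching: verifying that $\gamma$ reproduces the composite lax monoidal structure on $T^2$ and that the two one-sided monoid-morphism conditions coincide diagram-for-diagram with the monoidal-transformation axioms. The comonad statement (item~1) then follows by replacing vertical monoids with vertical comonoids throughout, turning monads into comonads and duoids into bimonoids.
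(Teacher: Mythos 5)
The paper offers no proof of these Observations at all: they are stated bare, immediately after the remarks that $(\ )_\ast$ is fully faithful and strong monoidal for $\circ$, so your transport argument is precisely the verification the paper leaves implicit, and its two pillars are both correct. Full faithfulness plus strong $\circ$-monoidality (with $(1_\catC)_\ast = \catI$) puts monad structures on $T$ in bijection with $\circ$-monoid structures on $T_\ast$; and your Day/Yoneda step, via the collapsed coend $(T_\ast\ast S_\ast)(A,B)\cong\int^{X,Y}\catC(A,TX\otimes SY)\otimes\catC(X\otimes Y,B)$, correctly identifies $\ast$-monoid structures on $T_\ast$ with lax monoidal structures on $T$.

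The one step you only ``expect'' does hold, and here is the short check that closes your proof. Write $\phi_{X,Y}\colon TX\otimes TY\to T(X\otimes Y)$ for the lax structure corresponding to $\mu_h$, and recall that the isomorphism $T_\ast\circ T_\ast\cong (T^2)_\ast$ sends a generator $\zeta(g\otimes f)$, with $f\colon A\to TZ$ and $g\colon Z\to TB$, to $Tg\circ f$. A generator of $\bigl((T_\ast\circ T_\ast)\ast(T_\ast\circ T_\ast)\bigr)(A\otimes A',B\otimes B')$ is $\xi\bigl(\zeta(g\otimes f)\otimes\zeta(g'\otimes f')\bigr)$; the explicit $\gamma$ of Example~\ref{DuoEx5} (symmetry, then $\xi\otimes\xi$, then $\zeta$) sends it to $\zeta\bigl(\xi(g\otimes g')\otimes\xi(f\otimes f')\bigr)$, so $(\mu_h\circ\mu_h)\circ\gamma$ gives $\zeta\bigl((\phi_{B,B'}\circ(g\otimes g'))\otimes(\phi_{Z,Z'}\circ(f\otimes f'))\bigr)$, which the isomorphism carries to $T\phi_{B,B'}\circ T(g\otimes g')\circ\phi_{Z,Z'}\circ(f\otimes f')$. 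A single application of naturality of $\phi$ rewrites this as $\bigl(T\phi_{B,B'}\circ\phi_{TB,TB'}\bigr)\circ\bigl((Tg\circ f)\otimes(Tg'\circ f')\bigr)$, which is exactly the composite lax structure of $T^2$ evaluated on the transported generators. Since, likewise, the canonical $\ast$-monoid structure on $\catI=(1_\catC)_\ast$ transports to the identity monoidal structure on $1_\catC$, the duoid compatibility axioms for $(\mu_v,\eta_v)$ become verbatim the statements that $\mu$ and $\eta$ are monoidal transformations, i.e.\ that $T$ is a monoidal monad.

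One caution about your framing of item~1: the two items are not exchanged by any formal duality available here. The duoidal opposites of Example~\ref{DuoEx4} dualize or swap both tensor structures at once; none of them turns $\circ$-monoids into $\circ$-comonoids while keeping $\ast$-monoids as monoids, so a duoid in $\catF$ is not a bimonoid in any of those opposites. What your last sentence actually describes --- and what suffices --- is the parallel argument: a fully faithful strong monoidal functor reflects comonoid structures just as it reflects monoid structures, so vertical comonoid structures on $G_\ast$ correspond to comonad structures on $G$, and the same generator computation shows that the bimonoid axioms for $(\delta_v,\varepsilon_v)$ match the monoidal-transformation axioms for $(\delta,\varepsilon)$, i.e.\ the definition of monoidal comonad. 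With that relabelling, your proof is complete and correct.
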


As a hint that we are not far removed from monoidal centres, 
notice that Tambara modules are precisely left $\catI$-, right $\catI$-bimodules in $(\catF,\ast)$.

A functor $\Phi : \catF^{\op} \longto \catF$ is defined by
\[
\Phi (F) (X,Y) = \int_{U,V} \catV \bigl(F(U,V),\catC(X\otimes U, V \otimes Y)\bigr)\,.
\]
It has a left adjoint $\Psi$ defined by
\[
\Psi (G) (U,V) = \int_{X,Y} \catV \bigl(G(X,Y),\catC(X\otimes U, V \otimes Y)\bigr)\,;
\]
the isomorphism
\[
\catF\bigl(G,\Phi(F)\bigr) \cong \catF\bigl(F,\Psi(G)\bigr)
\]
is immediate from symmetric monoidal closedness in $\catV$.

\begin{Theorem} 
$\Phi : (F^{\op}, \circ^{\rev}, \ast) \longto (\catF,\ast,\circ)$ is a duoidal $\catV$-functor. 
It is vertically normal{\upshape:} $\Phi (J) \cong \catI$.
\end{Theorem}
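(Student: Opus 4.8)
The plan is to put $\Phi$ into representable form and then read off all of its structure from elementary morphisms between the representing objects. Write $K(X,Y)\in\catF$ for the $\catV$-functor $(U,V)\mapsto\catC(X\otimes U,V\otimes Y)$. The defining end is precisely the enriched hom of $\catF=[\catC^{\op}\otimes\catC,\catV\,]$, so that
\[
\Phi(F)(X,Y)\;\cong\;\catF\bigl(F,K(X,Y)\bigr),
\]
contravariantly $\catV$-naturally in $F$. Thus $\Phi$ is the functor $\catF\bigl(-,K(-,-)\bigr)$, and every comparison morphism I need will come from the $\catV$-functoriality of $\ast$ and of $\circ$ on $\catF$ together with a small repertoire of morphisms built by tensoring and composing in $\catC$.

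First I would settle vertical normality. Because $J$ is the $\ast$-unit, two applications of the Yoneda lemma (one in each argument of $L$) give $\catF(J,L)\cong L(I,I)$ for every $L$. Hence
\[
\Phi(J)(X,Y)\;\cong\;K(X,Y)(I,I)\;=\;\catC(X\otimes I,I\otimes Y)\;\cong\;\catC(X,Y)\;=\;\catI(X,Y),
\]
the last isomorphism coming from the unit constraints of $\catC$. This natural isomorphism is exactly the vertical unit comparison $\catI\to\Phi(J)$, so $\Phi$ is vertically normal.

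Next I would construct the two lax structure morphisms. For the vertical one, $\Phi(M)\circ\Phi(N)\to\Phi(M\ast N)$, I would combine the symmetry of $\catV$, the $\catV$-functoriality of $\ast$ in the form $\catF(M,A)\otimes\catF(N,B)\to\catF(M\ast N,A\ast B)$, and a morphism $\kappa_Z:K(X,Z)\ast K(Z,Y)\to K(X,Y)$. On a convolution datum $a:A\to U\otimes V$, $b:X'\otimes Y'\to B$, $f:X\otimes U\to X'\otimes Z$, $g:Z\otimes V\to Y'\otimes Y$, this $\kappa_Z$ returns the composite
\[
X\otimes A\xrightarrow{1\otimes a}X\otimes U\otimes V\xrightarrow{f\otimes 1}X'\otimes Z\otimes V\xrightarrow{1\otimes g}X'\otimes Y'\otimes Y\xrightarrow{b\otimes 1}B\otimes Y,
\]
which is dinatural in $Z$ because $Z$ occurs only as the object shared by $f$ and $g$. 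Since $(\Phi(M)\circ\Phi(N))(X,Y)=\int^Z\catF(N,K(Z,Y))\otimes\catF(M,K(X,Z))$ and the target is constant in $Z$, folding along that coend yields the required map into $\catF(M\ast N,K(X,Y))=\Phi(M\ast N)(X,Y)$. The horizontal morphism $\Phi(M)\ast\Phi(N)\to\Phi(N\circ M)$ is produced symmetrically, using the $\catV$-functoriality of $\circ$ and the analogous linking composite in $\catC$, now absorbing the convolution factors $\catC(X,U\otimes V)$ and $\catC(X'\otimes Y',Y)$. The horizontal unit comparison $J\to\Phi(\catI)$ sends $p:X\to I$, $q:I\to Y$ to the family $X\otimes W\cong W\cong W\otimes Y$ got by conjugating $p$ and $q$ with the unit isomorphisms; it is dinatural in $W$ and, as expected, not invertible in general.

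Finally I would verify the coherence conditions. Each associativity and unit law, horizontal and vertical, unwinds by Fubini for (co)ends and the (co)Yoneda lemma into an identity between two bracketings of a tensor in $\catC$ that are linked along coend variables; these hold by naturality and by Mac Lane coherence for $(\catC,\otimes,I)$, and so are routine. The real obstacle is the duoidal compatibility, i.e.\ that $\Phi$ intertwines the middle-four interchange of the source $(\catF^{\op},\circ^{\rev},\ast)$ with that of the target $(\catF,\ast,\circ)$. Concretely one must show that, for all $A,B,C,D$, the two composites
\[
(\Phi A\circ\Phi B)\ast(\Phi C\circ\Phi D)\longrightarrow\Phi\bigl((C\circ A)\ast(D\circ B)\bigr)
\]
agree, where one route applies the target $\gamma$ and then the horizontal and vertical comparisons, and the other applies the vertical and horizontal comparisons and then $\Phi(\gamma^{\op})$. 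After substituting the representable form and the linking morphisms, this collapses to a single interchange identity among $\otimes$ and composition in $\catC$, with the four coend variables and the intermediate objects matched up by the associativity constraints that reorder the tensor factors. Checking that this identity holds is where essentially all of the bookkeeping lives; once it is in place, compatibility of the unit morphisms $\mu,\tau,\delta$ with $\Phi$ follows immediately from the normality computation above.
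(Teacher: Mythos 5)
Your proposal is correct and follows essentially the same route as the paper's own proof: the units and the normality isomorphism come from Yoneda, and the two multiplicative comparisons $\mu_h$ and $\mu_v$ come from the same linking composites in $\catC$, with your representable packaging $\Phi(F)(X,Y)\cong\catF\bigl(F,K(X,Y)\bigr)$ merely reorganizing the paper's pointwise end manipulations into hom-functoriality of $\ast$ and $\circ$. Like the paper (which ends with ``The axioms require checking in detail''), you defer the coherence verifications as routine, so there is no gap relative to the published argument.
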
 

\begin{proof} 
The identity morphisms in $\catC$ give
\[
I \longto \int_V \catC(V,V) \cong \Phi (\catI)(I,I)
\]
and hence $\eta_h : J \to \Phi (\catI)$ by Yoneda.
For $\mu_h : \Phi (F) \ast \Phi (G) \to \Phi (G\circ F)$, we require
\[
\Phi (F) (U,X) \otimes \Phi (G) (V,Y) \to \Phi (G\circ F) (U\otimes V, X \otimes Y)
\]
which amounts to having
\begin{multline*}
\lefteqn{\int_{H,K} \catV \bigl(F(H,K),\catC(U\otimes H,K\otimes X)\bigr) 
   \otimes \int_{R,S} \catV \bigl(G(R,S),\catC(V\otimes R, S \otimes Y)\bigr)}\\
 \longto \catV (F(Z,Q) \otimes G(P,Z)\,, \catC \bigl(U \otimes V \otimes P, Q \otimes X \otimes Y)\bigr)\,.
\end{multline*}
We use the projections where $H=Z$, $K=Q$ and $R=P$, $S=Z$ to move from the domain tensor product of two ends to 
\[
\catV (F(Z,Q), \catC \bigl(U \otimes Z, Q \otimes X)\bigr) \otimes \catV \bigl(G(P,Z),\catC(V\otimes P, Z \otimes Y) \bigr)
\]
and then, by tensoring, to
\[
\catV \bigl(F(Z,Q) \otimes G(P,Z), \catC (U\otimes Z, Q \otimes X) \otimes \catC (V \otimes P, Z \otimes Y) \bigr)\,.
\]
We get to the required codomain by composing with the composite 
\begin{align*}
\lefteqn{\catC(U\otimes Z, Q \otimes X) \otimes \catC (V \otimes P, Z \otimes Y)}
\\
 &&& \xymatrix @C+11mm { \ar[r]^{(-\otimes Y)\otimes (U\otimes -)} &}
 \catC (U \otimes Z \otimes Y, Q \otimes X \otimes Y) \otimes \catC (U \otimes V \otimes P, U \otimes Z \otimes Y)
\\
 &&&\xymatrix @C+4mm { \ar[r]^{\mathrm{comp.}}& } \catC (U\otimes V \otimes P, Q \otimes X \otimes Y)\,.
\end{align*}
For the vertical structure, the normality comes from the isomorphisms
\begin{align*}
 \catI (X,Y) &= \catC (X,Y) \cong \catC (X \otimes I, I \otimes Y)
 \\
   &\cong \int_{U,V} \catV (J(U,V),\catC(X\otimes U, V \otimes Y)) = \Phi (J) (X,Y)\,.
\end{align*}
For $\mu_v : \Phi (F) \circ \Phi (G) \to \Phi (F \ast G)$, we require a natural family
\[
\Phi (G) (W,V) \otimes \Phi (F) (U,W) \to \Phi (F \times G) (U,V).
\]
Using the calculation
\begin{align*}
 \Phi (F \ast G) (U,V) &= \int_{X,Y} \catV \bigl(F \ast G) (X,Y), \catC (U \otimes X, Y \otimes V)\bigr)
\\
 &\cong \int_{X,Y,H,K} \catV \bigl(F(X,H)\otimes G(Y,K),\catC (U\otimes X \otimes Y, H \otimes K \otimes V)\bigr)
\end{align*}
and a middle-of-four isomorphism, we see that we need
\[
\Phi (G) (W,V) \otimes G(Y,K) \otimes \Phi (F) (U,W) \otimes F (X,H) \longto \catC (U \otimes X \otimes Y, H \otimes K \otimes V)\,.
\]
Using the universal morphism
\[
\alpha^{}_F : \Phi (F) (W,V) \otimes F (X,H) \longto \catC (W \otimes X, H \otimes V)
\]
and the similar $\alpha^{}_G$, we have what we need if we have
\[
\catC (W \otimes Y,K \otimes V) \otimes \catC (U \otimes X, H \otimes W) 
 \longto \catC (U \otimes X \otimes Y, H \otimes K \otimes V);
\]
and, as before, we do have such, namely, 
\begin{align*}
\lefteqn{\catC(W\otimes Y, K \otimes V) \otimes \catC (U \otimes X, H \otimes W) }
\\
 &&& \xymatrix @C+14mm { \ar[r]^{(H\otimes-)\otimes (-\otimes Y)}&}
\catC (H \otimes W \otimes Y, H \otimes K \otimes V) \otimes \catC (U \otimes X \otimes Y, H \otimes W \otimes Y)
\\
 &&& \xymatrix @C+4mm { \ar[r]^{\mathrm{comp.}} & } \catC (U\otimes X \otimes Y, H \otimes K \otimes V)\,.
\end{align*}
This describes the duoidal structure on $\Phi$.   
The axioms require checking in detail.
\end{proof} 

\begin{Remark}
The convolution structure $\ast$ on $\catF$ is closed monoidal;
that is, we have that $\catF (M \ast N, L) \cong \catF (N, L^M)$, where
\[
L^M (X,Y) = \int_{U,V} \catV \bigl(M(U,V),L(U\otimes X,V\otimes Y)\bigr)\,.
\]
So
\[
\catI^F (X,Y) = \int_{U,V} \catV \bigl(F(U,V),\catC(U\otimes X, V \otimes Y)\bigr),
\]
which is very close to the formula for $\Phi (F) (X,Y)$.    
However the $X \otimes U$ replacing $U \otimes X$ is important for the consideration of centres.
\end{Remark}

If $\catC$ is left closed and suitably complete, we can define a ($\catV$-) functor as follows:
\[
\funcC : [\,\catC,\catC\,]^{\op} \longto [\,\catC,\catC\,]
\quad\text{whereby}\quad 
\mathop{\funcC(G)} Y = \int_V\laction{GV}{(V\otimes Y)}\,.
\]
Then
\begin{align*}
\Phi (G_\ast) (X,Y) &=
\int_{U,V} \catV \bigl(\catC(U,GV),\catC(X\otimes U, V \otimes Y)\bigr)
\\
&\cong \int_V \catC (X\otimes GV,V\otimes Y)
   \cong \catC \bigl(X,\funcC(G)Y\bigr)
\\
&\cong \funcC (G)_\ast (X,Y)\,.
\\ &\kern-20pt
\xymatrix{\ar @{} [dr] |{\cong}
[\,\catC,\catC\,]^{\op}   \ar[r]^{\funcC}\ar[d]_{(\ )_\ast } & [\,\catC,\catC\,] \ar[d]^{(\ )_\ast } 
\\
[\,\catC^{\op} \otimes \catC,\catV\,]^{\op} \ar[r]_\Phi \ar @{} [d]|*@{=}
 & [\,\catC^{\op} \otimes \catC,\catV\,] \ar@{} [d] |*@{=}
\\
*!D{\catF^{\op} }&*!D{\catF}
} \end{align*}

\begin{Lemma}
If $U$ has a left dual $\ldual{U}$ then
\[
U \otimes \funcC (G) Y \cong \int_V {}^{GV} (U \otimes V \otimes Y)\,.
\]
\end{Lemma}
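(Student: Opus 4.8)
The plan is to use the single hypothesis — that $U$ has a left dual $\ldual U$ — in two places: once to move $U\otimes-$ past the end, and once, via an enriched Yoneda argument, to push it through the left internal hom. A left dual $\ldual U$ comes equipped with unit and counit exhibiting $\ldual U\otimes-$ as left adjoint to $U\otimes-$, so there is a $\catV$-natural isomorphism $\catC(\ldual U\otimes W,Z)\cong\catC(W,U\otimes Z)$ for all objects $W,Z$ of $\catC$. Two consequences drive the argument: $U\otimes-$ is a $\catV$-right-adjoint, so it preserves the end defining $\funcC(G)Y$; and the displayed adjunction is exactly what is needed in the representable computation below.

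First I would pull $U\otimes-$ inside the end. Since $U\otimes-$ preserves $\catV$-limits, and the defining formula is $\funcC(G)Y=\int_V{}^{GV}(V\otimes Y)$, this gives
\[
U\otimes\funcC(G)Y\;\cong\;\int_V\bigl(U\otimes{}^{GV}(V\otimes Y)\bigr)\,.
\]
It then suffices to produce, $\catV$-naturally in $V$, an isomorphism $U\otimes{}^{GV}(V\otimes Y)\cong{}^{GV}(U\otimes V\otimes Y)$ and to carry it under the end. I would obtain this by testing against an arbitrary object $W$ and comparing representables. On one side, left-closedness of $\catC$ followed by the duality adjunction gives
\[
\catC\bigl(W,{}^{GV}(U\otimes V\otimes Y)\bigr)\cong\catC\bigl(W\otimes GV,U\otimes(V\otimes Y)\bigr)\cong\catC\bigl(\ldual U\otimes W\otimes GV,V\otimes Y\bigr)\,.
\]
On the other side, the duality adjunction followed by left-closedness gives
\[
\catC\bigl(W,U\otimes{}^{GV}(V\otimes Y)\bigr)\cong\catC\bigl(\ldual U\otimes W,{}^{GV}(V\otimes Y)\bigr)\cong\catC\bigl(\ldual U\otimes W\otimes GV,V\otimes Y\bigr)\,.
\]
Both composites represent the same functor of $W$, so the enriched Yoneda lemma delivers the required pointwise isomorphism.

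The genuine work — and the main obstacle — is not the existence of these isomorphisms but the bookkeeping that legitimises applying one inside $\int_V$: one must check that the pointwise isomorphism is $\catV$-natural in $V$, and that the associativity constraints tacitly used in writing $U\otimes V\otimes Y$ and in chaining the two adjunctions cohere. Since every arrow in sight is assembled from the duality unit and counit together with the closed-structure adjunction, all of these naturalities are formal; the only care needed is to invoke both "right adjoints preserve ends" and the Yoneda lemma in their $\catV$-enriched forms, which is justified because the duality and closed-structure adjunctions are $\catV$-adjunctions.
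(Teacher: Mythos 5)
Your proof is correct and is essentially the paper's own argument: both rest on the $\catV$-adjunction $\ldual{U}\otimes{-}\dashv U\otimes{-}$ supplied by the left dual, the left-closed adjunction $\catC(X\otimes GV,Z)\cong\catC(X,{}^{GV}\!Z)$, preservation of ends, and Yoneda. The paper simply runs these as one chain of hom-isomorphisms $\catC\bigl(X,U\otimes\funcC(G)Y\bigr)\cong\cdots\cong\catC\bigl(X,\int_V{}^{GV}(U\otimes V\otimes Y)\bigr)$ under a single representable, whereas you factor the very same moves into ``right adjoints preserve ends'' plus a pointwise isomorphism under the end; the mathematical content is identical.
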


\begin{proof} 
\begin{align*}
\catC \bigl(C,U\otimes \funcC (G)Y\bigr) &\cong \catC (^\wedge U \otimes X, \funcC (G) Y)
\\
&\cong \int_V \catC \bigl({}\ldual{U}\otimes X, {}^{GV}(V\otimes Y)\bigr)
  \cong \int_V \catC ({}\ldual{U} \otimes X \otimes GV,V\otimes Y)
\\
&\cong \int_V \catC (X\otimes GV,U\otimes V \otimes Y) \cong \catC \bigl(X,\int_V {}^{GV}(U\otimes V\otimes Y)\bigr)\,. 
\end{align*}
\end{proof}

\begin{Proposition}
If $\catC$ is left autonomous and suitably complete, then we have that for all endo-$\catV$-functors $F,G:\catC \longto \catC$,
\[
\mu_v : \Phi (F_\ast) \circ \Phi (G_\ast) \to \Phi (F_\ast \ast G_\ast)
\]
is invertible.
\end{Proposition}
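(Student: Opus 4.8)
The plan is to prove invertibility by computing the source and the target of $\mu_v$ separately as $\catV$-functors in the pair $(P,Q)$, showing that both are canonically isomorphic to
\[
E(P,Q)=\int_{X,Y}\catC(P\otimes FX\otimes GY,\,X\otimes Y\otimes Q)\,,
\]
and only then identifying $\mu_v$ itself with the resulting comparison. The target computation is automatic; left autonomy is consumed at exactly one point, in the source, through the Lemma above.

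First I would compute the target. Expanding $F_\ast\ast G_\ast$ by the definition of $\ast$ and collapsing the two representables $\catC(-,F-)$, $\catC(-,G-)$ by co-Yoneda gives $(F_\ast\ast G_\ast)(A,B)\cong\int^{X,Y}\catC(A,FX\otimes GY)\otimes\catC(X\otimes Y,B)$. Substituting this into $\Phi(F_\ast\ast G_\ast)(P,Q)=\int_{A,B}\catV\bigl((F_\ast\ast G_\ast)(A,B),\catC(P\otimes A,B\otimes Q)\bigr)$, pulling the coend over $X,Y$ out of the first argument of $\catV$ so that it becomes an end, and applying the enriched Yoneda lemma to substitute $A=FX\otimes GY$ (contravariantly) and $B=X\otimes Y$ (covariantly), I obtain $\Phi(F_\ast\ast G_\ast)(P,Q)\cong E(P,Q)$. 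No duality enters here.

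Next I would compute the source. Using the isomorphism $\Phi((-)_\ast)\cong\funcC(-)_\ast$ established before the Lemma, together with the strong monoidality of $(\ )_\ast$ with respect to $\circ$, the source becomes $\Phi(F_\ast)\circ\Phi(G_\ast)\cong(\funcC(F)\circ\funcC(G))_\ast$, whose value at $(P,Q)$ is $\catC\bigl(P,\funcC(F)(\funcC(G)Q)\bigr)$. Now $\funcC(F)(\funcC(G)Q)=\int_W{}^{FW}\bigl(W\otimes\funcC(G)Q\bigr)$, and this is the single place where left autonomy is used: since every $W$ has a left dual, the Lemma yields $W\otimes\funcC(G)Q\cong\int_V{}^{GV}(W\otimes V\otimes Q)$. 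As ${}^{FW}(-)$ is a right adjoint it commutes with this end, and ${}^{FW}({}^{GV}Z)\cong{}^{FW\otimes GV}Z$, so $\funcC(F)(\funcC(G)Q)\cong\int_{W,V}{}^{FW\otimes GV}(W\otimes V\otimes Q)$. Applying $\catC(P,-)$ and the left-hom adjunction then gives $\int_{W,V}\catC(P\otimes FW\otimes GV,\,W\otimes V\otimes Q)\cong E(P,Q)$.

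The hard part, and the remaining work, is to check that $\mu_v$ is exactly this comparison and not merely that source and target are abstractly isomorphic. I would argue by the universal property of the target end: a morphism into $\Phi(F_\ast\ast G_\ast)=\int_{A,B}\catV(\ldots)$ is determined by its legs, and $\mu_v$ is defined in the proof of the Theorem precisely by prescribing these legs through the evaluation counits $\alpha^{}_{F_\ast},\alpha^{}_{G_\ast}$ of the ends defining $\Phi$, followed by the composition map $\catC(W\otimes Y,K\otimes V)\otimes\catC(U\otimes X,H\otimes W)\to\catC(U\otimes X\otimes Y,H\otimes K\otimes V)$ and a middle-of-four interchange. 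Specialising $M=F_\ast$, $N=G_\ast$, each $\alpha$ reduces, under the co-Yoneda identifications above, to an end-projection whiskered by $U\otimes-$ and composed in $\catC$; one then verifies that the surviving composition-in-$\catC$ leg coincides with the corresponding leg of the isomorphism assembled in the two computations. I expect this leg-matching to be the delicate, bookkeeping-heavy step, but purely formal: no hypothesis beyond the left autonomy already spent in the Lemma is required, so once the legs agree the universal property forces $\mu_v$ to equal the comparison isomorphism, whence $\mu_v$ is invertible.
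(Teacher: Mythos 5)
Your proposal follows essentially the same route as the paper: its proof likewise computes the target $\Phi(F_\ast\ast G_\ast)(X,Y)\cong\int_{U,V}\catC(X\otimes FU\otimes GV,\,U\otimes V\otimes Y)$ by Yoneda, and the source via $\Phi(G_\ast)\cong\funcC(G)_\ast$, strong monoidality of $(\ )_\ast$ with respect to $\circ$, the Lemma (the single point where left autonomy enters), and the hom adjunctions, arriving at the same end. The one difference is that the paper stops at exhibiting this abstract isomorphism of source and target and never checks that $\mu_v$ itself is the comparison map, so the leg-matching step you flag as the remaining delicate work is a point on which your outline is actually more scrupulous than the paper's own proof.
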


\begin{proof} 
Using Yoneda and an earlier calculation, we have
\[
\Phi (F_\ast \ast G_\ast) (X,Y) \cong \int_{U,V} \catC (X \otimes FU \otimes GV, U\otimes V \otimes Y).
\]
However, using the Lemma, we have
\begin{align*}
 \bigl(\Phi(F_\ast)\circ \Phi(G_\ast)\bigr) (X,Y) &\cong \bigl(\funcC(F)_\ast \circ \funcC (G)_\ast\bigr)(X,Y)
\\
&= \catC \bigl(X,\funcC(F)(\funcC(G)Y)\bigr) 
  \cong \int_U \catC \bigl(X, {}^{FU}(U\otimes \funcC(G)Y)\bigr)
\\
&\cong 
\int_U \catC \bigl(X \otimes FU, U\otimes \funcC (G) Y\bigr) 
\\
& \cong \int_{U,V} \catC \bigl(X\otimes FU, {}^{GV}(U\otimes V \otimes Y)\bigr)
\\
&\cong \int_{U,V} \catC (X\otimes FU \otimes GV, U\otimes V \otimes Y)\,.  \qedhere
\end{align*}
\end{proof}

\begin{Corollary} 
Under the conditions of the Proposition, 
if $G$ is a monoidal comonad on $\catC$ then $\funcC (G)$ is a monoidal comonad on $\catC$.
\end{Corollary}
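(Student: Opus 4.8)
The plan is to reduce the statement to a structure-preservation property of the duoidal functor $\Phi$ and then to invoke the preceding Proposition to supply the one piece of invertibility that is needed. By the first of the Observations above, $G$ is a monoidal comonad on $\catC$ precisely when $G_\ast$ is a bimonoid in $\catF=[\,\catC^{\op}\otimes\catC,\catV\,]$, and likewise $\funcC(G)$ is a monoidal comonad precisely when $\funcC(G)_\ast$ is a bimonoid. Since the commuting square established above gives $\funcC(G)_\ast\cong\Phi(G_\ast)$ in $\catF$, it suffices to show that $\Phi$ carries the bimonoid $G_\ast$ to a bimonoid $\Phi(G_\ast)$. Thus the whole Corollary becomes the assertion that, on objects of the form $T_\ast$, the contravariant functor $\Phi$ preserves the bimonoid structure.

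First I would unpack what $\Phi$ provides. By the Theorem, $\Phi:(\catF^{\op},\circ^{\rev},\ast)\to(\catF,\ast,\circ)$ is a duoidal $\catV$-functor, so it is lax monoidal for the horizontal structures, with $\mu_h:\Phi(F)\ast\Phi(G')\to\Phi(G'\circ F)$ and $\eta_h:J\to\Phi(\catI)$, and lax monoidal for the vertical structures, with $\mu_v:\Phi(F)\circ\Phi(G')\to\Phi(F\ast G')$ and $\eta_v:\catI\to\Phi(J)$. The whole point of the hypothesis that $\catC$ be left autonomous is that, by the preceding Proposition, $\mu_v$ is invertible on objects of the form $F_\ast$, while vertical normality gives $\eta_v:\catI\xrightarrow{\cong}\Phi(J)$. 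Hence, restricted to the replete image of $(\ )_\ast$ under composition and $\ast$, the functor $\Phi$ is vertically strong, so its vertical structure may equally be read as opmonoidal via $\mu_v^{-1}$ and $\eta_v^{-1}$. Being horizontally lax and vertically opmonoidal in a compatible way, $\Phi$ behaves as a bilax functor between the two duoidal categories, and such functors send bimonoids to bimonoids.

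Concretely, I would exhibit the two structures on $\Phi(G_\ast)$ directly. Writing $(\mu,\eta)$ for the horizontal $\ast$-monoid and $(\delta_v,\varepsilon_v)$ for the vertical $\circ$-comonoid of $G_\ast$, the horizontal multiplication on $\Phi(G_\ast)$ is the composite $\Phi(G_\ast)\ast\Phi(G_\ast)\xrightarrow{\mu_h}\Phi(G_\ast\circ G_\ast)\xrightarrow{\Phi(\delta_v)}\Phi(G_\ast)$ with unit $J\xrightarrow{\eta_h}\Phi(\catI)\xrightarrow{\Phi(\varepsilon_v)}\Phi(G_\ast)$, using contravariance of $\Phi$ to turn the comonoid $\delta_v,\varepsilon_v$ into multiplications; dually, the vertical comultiplication is $\Phi(G_\ast)\xrightarrow{\Phi(\mu)}\Phi(G_\ast\ast G_\ast)\xrightarrow{\mu_v^{-1}}\Phi(G_\ast)\circ\Phi(G_\ast)$ with counit $\Phi(G_\ast)\xrightarrow{\Phi(\eta)}\Phi(J)\xrightarrow{\eta_v^{-1}}\catI$. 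Associativity and coassociativity follow from the monoidal coherence of $\mu_h$ and $\mu_v$ together with the (co)associativity of the structures transported from $G_\ast$, and the bimonoid compatibility between the new $\ast$-monoid and $\circ$-comonoid reduces, via the middle-four-interchange coherence built into $\Phi$ being duoidal, to the bimonoid compatibility already satisfied by $G_\ast$. Transporting along $\funcC(G)_\ast\cong\Phi(G_\ast)$ and reading the Observation backwards then yields that $\funcC(G)$ is a monoidal comonad.

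The main obstacle is exactly the invertibility of $\mu_v$: the duoidal functor $\Phi$ is only \emph{lax} vertically, so without extra hypotheses it would endow $\Phi(G_\ast)$ with a horizontal monoid but no vertical comultiplication. It is the left autonomy of $\catC$, through the preceding Proposition (and the Lemma feeding it), that upgrades the vertical lax structure to a strong one and thereby supplies $\mu_v^{-1}$; the remaining verifications are the routine but lengthy coherence checks that such a preservation result always entails.
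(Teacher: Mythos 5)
Your proposal is correct and takes essentially the same route as the paper's own proof: reduce via the Observation and the isomorphism $\funcC(G)_\ast \cong \Phi(G_\ast)$, then transport the bimonoid structure of $G_\ast$ through the duoidal functor $\Phi$, defining the horizontal multiplication as $\mu_h$ followed by $\Phi(\delta_v)$ and the vertical comultiplication as $\Phi(\mu_h)$ followed by $\mu_v^{-1}$, whose invertibility is exactly what the preceding Proposition supplies. The paper's argument is simply a terser version of yours, leaving the (co)units, the normality isomorphism $\Phi(J)\cong\catI$, and the coherence checks implicit.
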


\begin{proof} 
We already noticed that $G$ is a monoidal comonad if and only if $G_\ast$ is a bimonoid in $\catF$.   
Then $\Phi(G_\ast)$ is a bimonoid in $\catF$ using
\begin{eqnarray*}
\mu_h       &:&   \Phi (G_\ast) \ast \Phi (G_\ast) \stackrel{\mu^{}_h}{\longto} \Phi (G_\ast\circ G_\ast)
  \xylongto{\Phi(\delta_v)} 
   \Phi (G_\ast)
\\
\mu_v &:& \Phi(G_\ast) \xylongto{\Phi(\mu^{}_h)} 
 \Phi (G_\ast \ast G_\ast) 
 \stackrel{\mu^{-1}_v}{\longto} \Phi (G_\ast) \circ \Phi (G_\ast)\,.
\end{eqnarray*}
However, $\Phi (G_\ast) \cong \funcC (G)_\ast\,$; so $\funcC (G)$ is a monoidal comonad.
\end{proof}

\subsection{Hopf bimonoids}

\begin{Definition} 
A bimonoid $A$ in any duoidal category $\catF$ is \emph{left Hopf} when the ``left fusion morphism''
\[
A \ast A \xylongto{1 \ast \delta} 
 (A \circ \catI) \ast (A \circ A) 
 \stackrel{\gamma}{\longto} (A\ast A) \circ (\catI \ast A) \xylongto{\mu \circ 1} 
  A \circ (\catI \ast A)
\]
is invertible.   
The bimonoid $A$ is \emph{right Hopf} when the ``right fusion morphism''
\[
A \ast A \xylongto{\delta \ast 1} 
 (A \circ A) \ast (A \circ \catI) \stackrel{\gamma}{\longto}
  (A \ast A) \circ (A \ast \catI) \xylongto{\mu \circ 1} 
  A \circ (A \ast \catI)
\]
is invertible. The bimonoid $A$ is \emph{Hopf} when it is both left and right Hopf.
\end{Definition} 

\begin{Proposition} 
Let $G$ be a monoidal comonad on $\catC$. 
In $\catF = [\,\catC^{\op} \otimes \catC,\catV\,]$, the bimonoid $G_\ast$ 
is left Hopf if the composite
\[
GX \otimes GY \xylongto{1 \otimes \delta} 
GX \otimes G^2 Y \xylongto{\mu^{}_{X,GY}} 
G (X \otimes GY)
\]
is invertible.   
In $\catF$, the bimonoid $G_\ast$ is right Hopf if the composite
\[
GX \otimes GY \xylongto{\delta \otimes 1} 
G^2 X \otimes GY \xylongto[10pt]{\mu^{}_{GX,Y}} 
G (GX \otimes Y)
\]
is invertible.
\end{Proposition}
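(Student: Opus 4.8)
The plan is to put both the domain and codomain of the left fusion morphism into a common normal form by coend calculus, and then to recognise the morphism itself as the map induced on coends by the object-level composite $\phi_{X,Y}\colon GX\otimes GY\to G(X\otimes GY)$ of the statement. Throughout I write $G_\ast(U,V)=\catC(U,GV)$. Because $(\ )_\ast$ is strong monoidal for $\circ$, the vertical comonoid structure $\delta$ on $G_\ast$ is $(\ )_\ast$ applied to the comonad comultiplication $\delta\colon G\to G^2$; and the horizontal multiplication $\mu_h\colon G_\ast\ast G_\ast\to G_\ast$ is the one induced by the lax monoidal structure $\mu_{X,Y}\colon GX\otimes GY\to G(X\otimes Y)$ of $G$. (That $G_\ast$ is a bimonoid in $\catF$ with exactly these two structures is the content of Observation~1.)

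First I would reduce the domain. Iterated use of the co-Yoneda lemma on the convolution formula collapses the four coend variables to two and gives
\[
(G_\ast\ast G_\ast)(A,B)\cong\int^{X,Y}\catC(A,\,GX\otimes GY)\otimes\catC(X\otimes Y,\,B),
\]
the factors $GX$ and $GY$ coming from the two copies of $G_\ast$. For the codomain, since $\catI=(1_\catC)_\ast$ the same manipulation yields
\[
(\catI\ast G_\ast)(Z,B)\cong\int^{X,Y}\catC(Z,\,X\otimes GY)\otimes\catC(X\otimes Y,\,B);
\]
feeding this into $G_\ast\circ(-)$ and collapsing the coend over $Z$ against $\catC(A,GZ)$ by co-Yoneda gives
\[
\bigl(G_\ast\circ(\catI\ast G_\ast)\bigr)(A,B)\cong\int^{X,Y}\catC\bigl(A,\,G(X\otimes GY)\bigr)\otimes\catC(X\otimes Y,\,B),
\]
the outer $G$ being contributed by the left factor $G_\ast$ and the inner $GY$ by the copy of $G_\ast$ inside $\catI\ast G_\ast$.

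With both ends in this form, I would trace the three constituents of the left fusion morphism — the unit identification $A\cong A\circ\catI$ together with $1\ast\delta$, the middle-of-four interchange $\gamma$ of Example~\ref{DuoEx5}, and $\mu_h\circ1$ — through these identifications, and check that the induced natural transformation
\[
\int^{X,Y}\catC(A,GX\otimes GY)\otimes\catC(X\otimes Y,B)\longto\int^{X,Y}\catC\bigl(A,G(X\otimes GY)\bigr)\otimes\catC(X\otimes Y,B)
\]
is exactly post-composition by $\catC(A,\phi_{X,Y})$, where
\[
\phi_{X,Y}\colon GX\otimes GY\xrightarrow{\,1\otimes\delta\,}GX\otimes G^2Y\xrightarrow{\,\mu_{X,GY}\,}G(X\otimes GY).
\]
The interchange $\gamma$ only permutes and regroups tensor factors; the $1\ast\delta$ contributes exactly the comultiplication $\delta_Y\colon GY\to G^2Y$ supplying the inner $G$, and $\mu_h$ contributes exactly the lax structure $\mu_{X,GY}$ fusing $GX\otimes G^2Y$ into $G(X\otimes GY)$. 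If $\phi_{X,Y}$ is invertible for all $X,Y$, then $\catC(A,\phi_{X,Y})$ is a natural isomorphism of integrands, the displayed map of coends is invertible, and hence $G_\ast$ is left Hopf. The right-Hopf case is symmetric: replacing $1\ast\delta$ by $\delta\ast1$ leads to the map induced by $\psi_{X,Y}\colon GX\otimes GY\xrightarrow{\,\delta\otimes1\,}G^2X\otimes GY\xrightarrow{\,\mu_{GX,Y}\,}G(GX\otimes Y)$.

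I expect the main obstacle to be this middle step: carefully bookkeeping the interchange $\gamma$ — defined in Example~\ref{DuoEx5} through the structure maps $\xi$ and $\zeta$ — together with the unit coherences across the several co-Yoneda reductions, so as to be sure that the two independent pieces of data (the comonad $\delta$ and the lax monoidal $\mu$) assemble into $\phi_{X,Y}$ in precisely this order rather than some permuted variant. Once that identification is secured, invertibility of the fusion morphism reduces cleanly to invertibility of $\phi_{X,Y}$.
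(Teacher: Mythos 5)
Your proposal is correct and is essentially the paper's own argument: the paper establishes the same normal forms, computing $\catF(G_\ast \ast G_\ast, L)$, $\catF\bigl(G_\ast \circ (\catI \ast G_\ast), L\bigr)$ and $\catF\bigl(G_\ast \circ (G_\ast \ast \catI), L\bigr)$ as ends in an arbitrary $L$, which by Yoneda is exactly your coend presentation of the domain and codomain of the fusion morphisms, and then likewise reduces the claim to checking that the stated composites induce those morphisms. The only difference is presentational (your explicit covariant coend formulas versus the paper's universal-property phrasing), and, like you, the paper leaves the final bookkeeping identification to the reader.
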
 

\noindent
(This links our Hopf notion with that of Brugui\`eres--Lack--Virelizier~\cite{BrugVirelLack} 
and Chikhl\-adze--Lack--Street~\cite{ChLaSt}).

\begin{proof} 
From the universal properties of $\ast$ and $\circ$, we easily calculate that
\begin{align*}
\catF (G_\ast \ast G_\ast,L) &\cong \int_{X,Y} L(GX \otimes GY, X \otimes Y)\,,
\\
\catF (G_\ast \circ (\catI \ast G_\ast),L) &\cong \int_{X,Y} L\bigl(G(X \otimes GY), X \otimes Y\bigr)\,,
\\
\catF (G_\ast \circ (G_\ast \ast \catI),L) &\cong \int_{X,Y} L\bigl(G(GX \otimes Y), X \otimes Y\bigr)
\end{align*}
and it remains to see that composites in the Proposition induce the fusion morphisms. 
\end{proof} 

\noindent
If the two composites in the Proposition are invertible, $G$ is called a \emph{Hopf comonad on} $\catC$. 

\begin{Remark} 
In general, I do not see that the converses hold in the Proposition.  We would require the morphisms
\[
\catC (X,Y) \otimes \catC (U,V) \stackrel{\otimes}{\longto} \catC (X \otimes U, Y \otimes V)
\]
to be strong monomorphisms in $\catV$.
\end{Remark} 

Suppose $G$ is a right Hopf comonad on a left autonomous monoidal category $\catC$.  
Suppose $\catC$ is complete enough and $G$ preserves limits.   
Then there is a distributive law
\[
\lambda : \funcC (G) G \longto G \funcC (G)
\]
between the comonads $G$ and $\funcC (G)$.  
To see this, note that the right Hopf condition gives a morphism
\[
I \stackrel{\eta}{\longto} GI \xymatrix@C+12pt { \ar[r]_{G (\text{unit})} & }
 G (GW \otimes \ldual{G}W) \cong GW \otimes G (\ldual{G}W)
\]
and hence a morphism
\[
\bar \lambda : \ldual{G}W \longto G(\ldual{G}W)\,.
\]
Then $\lambda$ is defined by the following diagram.
\[
\xymatrix @C+1.5cm {
 \int_V V \otimes GY \otimes \ldual{G}V \ar[r]^{\lambda_Y} \ar[d]^{\pr_{GW}} 
  &  \int_W G (W\otimes Y \otimes \ldual{G}W) \ar[d]^{\pr_W}
\\
GW \otimes GY \otimes \ldual{G}^2W \ar[d]_{1\otimes 1 \otimes {}^{\wedge}\delta_W} 
 & G(W\otimes Y \otimes \ldual{G}W) 
\\
GW\otimes GY \otimes \ldual{G}W \ar[r]_-{1\otimes 1 \otimes \bar \lambda}  
 &  GW \otimes GY \otimes G (\ldual{G}W) \ar[u]_{\mu_3} 
}
\]
The composite comonad (or wreath product)
\[
\Drin (G) = \mathop{\funcC (G)} G
\]
is called the \emph{Drinfeld double} of $G$ by Brugui\`eres--Virelizier~\cite{BrugVirel2008}.

\begin{Theorem} [Brugui\`eres--Virelizier]
If $\catC$ is autonomous, 
$G$ is a Hopf comonad on $\catC$, and $\funcC(G)$ exists 
then $\funcC(G)$ and $\Drin(G)$ are Hopf comonads on $\catC$, and there is an equivalence of monoidal categories
\[
\catC^{\Drin (G)}  \simeq Z (\catC^G)\,.
\]
{\upshape [}Here $\catC^G$ denotes the category of Eilenberg--Moore $G$-coalgebras.{\upshape ]}
\end{Theorem}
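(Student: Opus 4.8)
The plan is to assemble the statement from the machinery already developed, treating everything about $\funcC(G)$ through the identification $\funcC(G)_\ast \cong \Phi(G_\ast)$ and everything about $\Drin(G)$ through the distributive law $\lambda$, and then deducing the equivalence with the centre from the centre theory of Section~\ref{Lecture3sub3}. \emph{First}, that $\funcC(G)$ is a Hopf comonad: by the Corollary we already know $\funcC(G)$ is a monoidal comonad, equivalently that $\funcC(G)_\ast \cong \Phi(G_\ast)$ is a bimonoid in $\catF$. Since $G$ is Hopf, $G_\ast$ is a Hopf bimonoid, and as $\Phi$ is a vertically normal duoidal $\catV$-functor which, by the Proposition, carries $\mu_v$ to an isomorphism on objects of the form $F_\ast$, it sends the left and right fusion morphisms of $G_\ast$ to those of $\Phi(G_\ast)$. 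Hence $\funcC(G)_\ast$ is Hopf, i.e.\ $\funcC(G)$ is a Hopf comonad. Concretely, the invertibility required by the last Proposition for $\funcC(G)$ — that $\funcC(G)X \otimes \funcC(G)Y \to \funcC(G)(X \otimes \funcC(G)Y)$ and its right-handed analogue be isomorphisms — can be verified directly from left-autonomy of $\catC$ and the Lemma, using the Hopf composites for $G$.

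\emph{Second}, the double. I would check that $\lambda : \funcC(G)G \to G\funcC(G)$, defined above from $\bar\lambda$ and the right Hopf condition, satisfies the comonad distributive-law axioms (a diagram chase from its definition). The comonadic form of Beck's distributive-law theorem then gives $\Drin(G) = \funcC(G)G$ a composite comonad structure together with an equivalence $\catC^{\Drin(G)} \simeq (\catC^G)^{\widetilde{\funcC(G)}}$ exhibiting $\Drin(G)$-coalgebras as $\widetilde{\funcC(G)}$-coalgebras internal to $\catC^G$, where $\widetilde{\funcC(G)}$ is the lift of $\funcC(G)$ along $\lambda$. That $\Drin(G)$ is again Hopf follows because its fusion operator factors, via $\lambda$, through the (invertible) fusion operators of $G$ and $\funcC(G)$.

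\emph{Third}, the centre. Since $G$ is a monoidal comonad, $\catC^G$ is monoidal with strong monoidal forgetful functor to $\catC$. An object of the lax centre $\catZ_\ell(\catC^G)$ is a $G$-coalgebra $A$ with a lax half-braiding $\sigma_X : A \otimes X \to X \otimes A$, natural and $\otimes$-compatible in $X \in \catC^G$. The end $\funcC(G)Y = \int_V {}^{GV}(V \otimes Y)$ is precisely the object representing such half-braiding data, so a compatible family $\sigma$ on $A$ is the same as a $\widetilde{\funcC(G)}$-coalgebra structure in $\catC^G$; this yields $(\catC^G)^{\widetilde{\funcC(G)}} \simeq \catZ_\ell(\catC^G)$, with matching tensor products since the vertical composite inducing $\Drin(G)$ reproduces the lax-centre tensor. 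Combining with the second stage gives $\catC^{\Drin(G)} \simeq \catZ_\ell(\catC^G)$. Finally the Hopf hypothesis on $G$, transported to $\Drin(G)$, forces every $\sigma_X$ to be invertible, so $\catZ_\ell(\catC^G) = \catZ(\catC^G)$ and we obtain the asserted monoidal equivalence $\catC^{\Drin(G)} \simeq Z(\catC^G)$.

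The main obstacle is the representability step opening the third stage: showing that half-braidings on a $G$-coalgebra are classified, monoidally, by $\widetilde{\funcC(G)}$-coalgebra structures. This is exactly where left-autonomy of $\catC$ and the Lemma are indispensable, and where the passage from the lax centre to the genuine centre is controlled by the fusion operators of the Proposition, whose invertibility only the Hopf condition secures.
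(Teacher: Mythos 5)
Your route to the equivalence itself (your second and third stages) is sound and genuinely different from the paper's. The paper never invokes a distributive-law/Beck argument or the intermediate category $(\catC^G)^{\widetilde{\funcC(G)}}$: it writes a $\Drin(G)$-coalgebra as a morphism $A \to \int_V V \otimes GA \otimes \ldual{G}V$, unpacks this (by the Lemma and autonomy) as a natural family $\alpha^{}_{A,V} : A \otimes GV \to V \otimes GA$ satisfying two conditions, and then exhibits explicit mutually inverse assignments: from $\alpha$ one gets the $G$-coaction $A \to A \otimes GI \to I \otimes GA \cong GA$ and the half-braiding $A \otimes V \to A \otimes GV \to V \otimes GA \to V \otimes A$, and conversely $\alpha^{}_{A,V}$ is recovered as $A \otimes GV \to GV \otimes A \to GV \otimes GA \to V \otimes GA$. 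Your factorization $\catC^{\Drin(G)} \simeq (\catC^G)^{\widetilde{\funcC(G)}} \simeq \catZ_\ell(\catC^G)$ is more modular and explains structurally why the correspondence exists (it is closer to Brugui\`eres--Virelizier's own treatment); the paper's direct route is shorter, yields the explicit formulas, and avoids both the verification of the distributive-law axioms and the representability step you correctly identify as the main obstacle. Note that your representability step and the paper's formulas encode the same computation, so neither route escapes that work.

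There is, however, a genuine gap in your first stage. The claim that the duoidal functor $\Phi$ ``sends the left and right fusion morphisms of $G_\ast$ to those of $\Phi(G_\ast)$'' does not typecheck. The Hopf hypothesis on $G$ gives, via the paper's Proposition, invertibility of the fusion morphisms of $G_\ast$ as a bimonoid in $(\catF,\ast,\circ)$. But $\Phi$ is duoidal as a functor $(\catF^{\op},\circ^{\rev},\ast) \to (\catF,\ast,\circ)$, and with respect to the domain structure the bimonoid $G_\ast$ has $\delta_v$ as its \emph{horizontal} multiplication and $\mu_h$ as its \emph{vertical} comultiplication; its fusion morphisms in that duoidal category are therefore entirely different morphisms (in $\catF$ they have the shape $G_\ast \ast (G_\ast \circ J) \to G_\ast \circ G_\ast$), about which the Hopf hypothesis says nothing. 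Moreover, duoidal (bilax) functors do not preserve Hopf bimonoids in general: one would need the relevant constraints to be invertible, and the paper establishes invertibility only of $\mu_v$ on objects of the form $F_\ast$, not of $\mu_h$. So the Hopfness of $\funcC(G)$ and $\Drin(G)$ rests entirely on the ``direct verification'' you mention only in passing; this is a real theorem (Brugui\`eres--Virelizier's centralizer theorem), proved by showing the composites $\funcC(G)X \otimes \funcC(G)Y \to \funcC(G)\bigl(X \otimes \funcC(G)Y\bigr)$, etc., are invertible using the end formula and the Hopf composites of $G$. Relatedly, your final step ``the Hopf hypothesis transported to $\Drin(G)$ forces every $\sigma_X$ to be invertible'' is not an argument; the clean statement is that $\catC$ autonomous and $G$ Hopf make $\catC^G$ autonomous, whence $\catZ_\ell(\catC^G) = \catZ(\catC^G)$ by the remark in Section~\ref{Lecture3sub3}.
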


\begin{proof}[Outline of proof:] 
A $\Drin (G)$-coalgebra $ A\longto \int\limits_V V \otimes GA \otimes \ldual{G}V$ amounts to a natural family
\[
\alpha^{}_{A,V} : A \otimes GV \longto V \otimes GA
\]
satisfying two conditions. Then $A$ becomes a $G$-coalgebra via the coaction
\[
A \xylongto{1\otimes \eta} 
A \otimes GI \xylongto{\alpha^{}_{A,I}} 
 I \otimes GA \cong GA
\]
and lies in $Z(\catC^G)$ using
\[
A \otimes V \xylongto{1\otimes \delta} 
A \otimes GV \xylongto{\alpha^{}_{A,V}} 
V \otimes GA \xylongto{1\otimes \varepsilon_A} 
V \otimes A\,.
\]
Conversely, $\alpha^{}_{A,V}$ can be reconstructed from $\delta : A \longto GA$ and $u_W : A \otimes W \longto W \otimes A$ as 
\[
A\otimes GV \xylongto{u_{GV}} 
GV \otimes A \xylongto{1\otimes \delta} 
GV \otimes GA \xylongto{\varepsilon_V\otimes 1} 
 V \otimes GA\,.  \qedhere 
\]  
\end{proof}
 
\subsection{The full centre of a monoid}

We now turn to Davydov's idea~\cite{Davydov2010} about the full centre of a monoid in a monoidal category.   
First we remind you of the construction in~\cite{MCL}.

Let $\funcM$ be a braided monoidal bicategory (such as $\Cat$).  
For any pseudomonoid $A$ (that is, a monoidal category when $\funcM=\Cat$) there is a pseudo-cosimplicial category
\[ 
 \UseTips{}\entrymodifiers={+<4mm>!C}\xymatrix{ 
 \funcM(U,A) \ar @<6pt>[r]  \ar @<-6pt>[r]
& \funcM(U\otimes A,A) \ar[l]  \ar @<6pt>[r]  \ar[r]   \ar @<-6pt>[r]
& \funcM(U\otimes A\otimes A,A) }
\]
defined in a Hochschild sort of way.
 
The descent category is denoted by $\CP(U,A)$; the objects are called \emph{centre pieces} for $A$.    
The \emph{monoidal centre $\cntrZ(A)$ of $A$} is a representing object for $\CP(-,A)\,$:
\[
\funcM\bigl(U, \cntrZ(A)\bigr) \simeq \CP(U,A)\,.
\]
If $\funcM$ is closed as a monoidal bicategory, the pseudo-cosimplicial category is equivalent to
\[
 \UseTips{}\entrymodifiers={+<4mm>!C}\xymatrix{ 
\funcM(U,A)  \ar @<6pt>[r]  \ar @<-6pt>[r]
& \funcM(U,[A,A]) \ar[l]  \ar @<6pt>[r]  \ar[r]   \ar @<-6pt>[r]
& \funcM(U,[A \otimes A,A])\,, }
\]
and $\cntrZ(A)$ is the descent object of the ``Hochschild complex''
\[ 
\UseTips{}\entrymodifiers={+<4mm>!C}\xymatrix{ 
A  \ar @<6pt>[r]  \ar @<-6pt>[r]
& [A,A] \ar[l]  \ar @<6pt>[r]  \ar[r]   \ar @<-6pt>[r]
& [A \otimes A,A]\,. }
\]
 
\begin{Proposition}
$\cntrZ(A)$ is a braided pseudomonoid in $\funcM$.
\end{Proposition}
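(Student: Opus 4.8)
The plan is to exploit the representing property $\funcM\bigl(U,\cntrZ(A)\bigr)\simeq\CP(U,A)$ together with the bicategorical Yoneda lemma: since $\cntrZ(A)$ represents the presheaf $\CP(-,A)\colon\funcM^{\op}\to\Cat$, equipping $\cntrZ(A)$ with a braided pseudomonoid structure is equivalent to equipping $\CP(-,A)$ with the structure of a braided pseudomonoid for the Day convolution on presheaves (the Yoneda pseudofunctor being strong monoidal, it reflects such structure onto a representing object). Unwinding the descent data, a centre piece over $U$ is a pair $(f,\theta)$ consisting of a $1$-cell $f\colon U\to A$ and an invertible $2$-cell $\theta$ — a \emph{half braiding} — comparing the two composites $U\otimes A\to A$ built from $f$, the multiplication of $A$, and the braiding of $\funcM$, subject to the cocycle (descent) conditions recording compatibility of $\theta$ with the multiplication and unit of the pseudomonoid $A$. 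Thus it suffices to manufacture, pseudonaturally in $U$, a coherent way of multiplying and braiding such pairs.

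First I would define the multiplication as an external product of centre pieces: given $(f,\theta)$ over $U$ and $(g,\psi)$ over $V$, the product centre piece over $U\otimes V$ has underlying $1$-cell $U\otimes V\to A\otimes A\to A$ (using $f\otimes g$ followed by the multiplication of $A$) and a half braiding assembled from $\theta$, $\psi$, the multiplication, and the associativity and braiding constraints of $\funcM$. This is precisely the categorification of the formula $(A,u)\otimes(B,v)=\bigl(A\otimes B,(u_-\otimes 1)\circ(1\otimes v_-)\bigr)$ for the tensor in the $1$-categorical monoidal centre recalled earlier. The unit is the centre piece over $I$ whose underlying $1$-cell is the unit of the pseudomonoid $A$, carrying its canonical half braiding. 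Transported along the representability equivalence, the Yoneda lemma converts these pseudonatural assignments into the structure $1$-cells $\cntrZ(A)\otimes\cntrZ(A)\to\cntrZ(A)$ and $I\to\cntrZ(A)$.

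Next I would construct the braiding, which is the conceptual heart and mirrors the classical fact that the centre of a monoid in a braided monoidal category is commutative. Following the $1$-categorical template $\gamma_{(A,u),(B,v)}=u_B$, the braiding $2$-cell comparing the two iterated products is built directly from the half braiding $\theta$ of the first piece evaluated at the underlying $1$-cell of the second; its invertibility is exactly the invertibility of $\theta$. In other words, the same datum that witnesses centrality of $f$ also supplies the braiding on $\cntrZ(A)$.

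The main obstacle is coherence bookkeeping inside the braided monoidal bicategory $\funcM$. One must check that the associativity and unit comparisons for the multiplication assemble into invertible modifications satisfying the pentagon and triangle axioms of a pseudomonoid, and that the braiding is a modification satisfying the two hexagon axioms. The inputs for these verifications are precisely the pseudomonoid coherences of $A$ (its associator and unitors), the cocycle conditions already imposed on the half braidings — which are exactly what the hexagons demand — and the coherence data of $\funcM$ itself (its associators, braiding, and the attendant coherence modifications such as the hexagonators). Each axiom reduces to a diagram of $2$-cells in $\funcM$ that commutes by these coherences; the conceptual content is transparent, but the pasting diagrams are large, so organising them (ideally by reusing the descent coherences rather than re-deriving them) is where the real work lies.
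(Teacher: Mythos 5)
Your proposal is correct and takes essentially the intended route: the paper states this Proposition without proof (it is imported from~\cite{MCL}), and there the braided pseudomonoid structure is produced exactly as you describe --- external multiplication, unit and braiding of centre pieces $(f,\theta)$, defined pseudonaturally in $U$ as categorifications of the formulas $(A,u)\otimes(B,v)$ and $\gamma_{(A,u),(B,v)}=u_B$ for the ordinary monoidal centre, then transported to $\cntrZ(A)$ along the representing equivalence $\funcM\bigl(U,\cntrZ(A)\bigr)\simeq\CP(U,A)$ (equivalently, by evaluating at the universal centre piece, which is the concrete form of your Yoneda argument). The coherence verifications you defer at the end are indeed the only remaining content, and they go through as you indicate, using the descent conditions on half braidings together with the coherences of $A$ and of $\funcM$.
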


For a monoidal category $\catC$ and a monoid $A$ in $\catC$, Davydov considers the category whose objects are maps
\[
\zeta : Z \to A
\]
in $\catC$ with $Z$ in $\catZ(\catC)$ and such that the following diagram commutes.
\[
 \xymatrix{Z\otimes A \ar[dd]_-{u_A}   \ar[r]^-{\zeta \otimes 1} & A\otimes A  \ar[rd]^-\mu
\\
&& A
\\
A\otimes Z   \ar[r]_-{1\otimes \zeta} & A\otimes A  \ar[ru]_-\mu
}
\]
A terminal object $i : \zz(A) \to A$ in this category is called the \emph{full centre} of~$A$.   
Davydov proves $\zz(A)$ is a commutative monoid in $\catZ(\catC)$, 
$i$ is a monoid morphism in $\catC$, and $\zz(A) \cong \zz(B)$ for Morita equivalent $A$ and $B$.

\medskip

Consider the 2-category $\Cat_\ast$ of pairs $(\catX,X)$ where $\catX$ is a category and $X$ is an object of $\catX$, 
and where the morphisms $(F,\varphi) : (\catX,X) \to (\catY,Y)$ consist of 
a functor $F : \catX \to \catY$ and a morphism $\varphi : FX \to Y$ in $\catY$.  
This is a symmetric monoidal 2-category under product.

A pseudomonoid in $\Cat_\ast$ is a pair $(\catC,A)$ where $\catC$ is a monoidal category and $A$ is a monoid in $\catC$.  
A braided pseudomonoid is a braided monoidal category $\catZ$ with a commutative monoid $Z$ selected.

\begin{Proposition}
If the full centre $\zz(A)$ of a monoid $A$ in a monoidal category $\catC$ exists (in the sense of Davydov) 
then the centre of $(\catC,A)$ in  $\Cat_\ast$ exists and is given by
\[
\catZ (\catC,A) \simeq \bigl(\catZ(\catC),\zz(A)\bigr)\,.
\]
\end{Proposition}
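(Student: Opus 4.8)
The plan is to avoid any discussion of closedness of $\Cat_\ast$ and instead compute the descent category of centre pieces $\CP\bigl(U,(\catC,A)\bigr)$ directly in $\funcM=\Cat_\ast$, then produce a pseudonatural equivalence
\[
\CP\bigl((\catX,X),(\catC,A)\bigr)\;\simeq\;\Cat_\ast\bigl((\catX,X),(\catZ(\catC),\zz(A))\bigr)
\]
in the test object $(\catX,X)$; by the defining universal property $\funcM\bigl(U,\catZ(\catC,A)\bigr)\simeq\CP(U,\cdot)$ this exhibits $(\catZ(\catC),\zz(A))$ as the centre. First I would record the pseudomonoid data: the multiplication of $(\catC,A)$ is the pair $(\otimes,\mu)$ consisting of the tensor of $\catC$ and the monoid multiplication $\mu\colon A\otimes A\to A$, and the unit is $(I,\eta)$. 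A $1$-cell $(\catX,X)\to(\catC,A)$ is a pair $(P,p)$ with $P\colon\catX\to\catC$ a functor and $p\colon PX\to A$ a morphism of $\catC$.

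Next I would unwind the two cofaces into degree one, using the symmetry of the cartesian product of $\Cat_\ast$ for the braiding: $d^0(P,p)$ is the functor $(x,c)\mapsto Px\otimes c$ pointed by $\mu\circ(p\otimes 1)$, while $d^1(P,p)$ is $(x,c)\mapsto c\otimes Px$ pointed by $\mu\circ(1\otimes p)$. A descent datum is an invertible $2$-cell $\sigma\colon d^0(P,p)\Rightarrow d^1(P,p)$, and since a $2$-cell of $\Cat_\ast$ is a natural transformation whose component at the distinguished object is compatible with the points, $\sigma$ consists of (i) a natural family $u^x_c\colon Px\otimes c\to c\otimes Px$ and (ii) the single point equation
\[
\mu\circ(1\otimes p)\circ u^X_A=\mu\circ(p\otimes 1)\colon PX\otimes A\longto A\,.
\]
The cocycle condition in degree two is exactly the hexagon, and normalisation gives $u^x_I=1$, so the family $u^x_{-}$ makes each $Px$ an object of $\catZ(\catC)$ and, by naturality in $x$, assembles into a lift $\bar P\colon\catX\to\catZ(\catC)$ of $P$; this is the pointed refinement of the usual computation of the centre of the monoidal category $\catC$ by this same descent in $\funcM=\Cat$. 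Writing $Z=\bar P(X)$ with half braiding $u^X$, equation (ii) is verbatim Davydov's commuting square, so it says precisely that $p\colon Z\to A$ is a centre piece for the monoid $A$ in his sense.

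It then remains to feed in the full centre through a representability lemma: for each $Z\in\catZ(\catC)$, postcomposition with $i\colon\zz(A)\to A$ is a bijection between $\catZ(\catC)$-morphisms $Z\to\zz(A)$ and centre pieces $Z\to A$, naturally in $Z$. Each composite $i\circ g$ is a centre piece because $i$ is one and $\catZ(\catC)$-morphisms intertwine half braidings; conversely terminality of $(\zz(A),i)$ in Davydov's category lifts every centre piece $\zeta\colon Z\to A$ uniquely through $i$, and the two passages are mutually inverse by uniqueness. Applying this with $Z=\bar P(X)$ converts the centre-piece datum $p$ into a morphism $\bar P(X)\to\zz(A)$ in $\catZ(\catC)$, which together with $\bar P$ is exactly a $1$-cell $(\catX,X)\to(\catZ(\catC),\zz(A))$; the same dictionary applied to morphisms of centre pieces matches the $2$-cells on both sides. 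This gives the desired pseudonatural equivalence, hence the existence and value of $\catZ(\catC,A)$. Finally I would transport the braided-pseudomonoid structure supplied by the general construction to the standard braiding of $\catZ(\catC)$ with $\zz(A)$ as selected commutative monoid, consistent with Davydov's theorem that $\zz(A)$ is commutative in $\catZ(\catC)$.

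The main obstacle I anticipate is the clean separation of the descent data: verifying that the coherence/cocycle conditions of the descent object decouple \emph{exactly} into the centre hexagon for the functor part $\bar P$ and Davydov's single square for the point $p$, with no residual coupling, and that the invertibility of $\sigma$ correctly lands us in $\catZ(\catC)$ rather than the lax centre. The attendant coherence bookkeeping — that the braided pseudomonoid structure on the representing object is the expected one — is routine but lengthy.
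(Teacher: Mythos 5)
The paper states this Proposition without proof, so there is no argument of the paper to compare against; your proposal is correct and is exactly the proof the paper's setup intends --- unwind the descent category $\CP\bigl(-,(\catC,A)\bigr)$ in $\funcM=\Cat_\ast$, note that the lift of each centre piece through $i\colon\zz(A)\to A$ exists and is unique by terminality (uniqueness being what also gives full faithfulness of your comparison functor), and conclude representability. The decoupling you flag as the main obstacle is in fact automatic: point-compatibility of a $2$-cell in $\Cat_\ast$ is a property of the underlying natural transformation rather than extra structure, so the cocycle and normalisation equations are verbatim those of the $\funcM=\Cat$ computation (yielding the hexagon and $u^x_I=1$, hence a landing in $\catZ(\catC)$ because descent $2$-cells are invertible), while the single residual point equation is verbatim Davydov's square.
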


\begin{center}
--------------------------------------------------------
\end{center}

\appendix

\noindent \small{Centre of Australian Category Theory \\
Macquarie University, NSW 2109 \\
Australia \\
<ross.street@mq.edu.au>}


\begin{thebibliography}{000}

\bibitem{AguiarMahajan} Marcelo Aguiar and Swapneel Mahajan, \textit{Monoidal Functors, Species and Hopf Algebras}, CRM Monograph Series \textbf{29} (American Mathematical Society, November 2010).

\bibitem{Atiyah1988} Michael Atiyah, \textit{Topological quantum field theories}, Institut des Hautes \'Etudes Scientifiques. Publications Math\'ematiques. \textbf{68} (1988) pp.\,175--186. 

\bibitem{Barr1979} Michael Barr, \textit{Star-autonomous categories}, Lecture Notes in Mathematics \textbf{752} (Springer, 1979).

\bibitem{BarWest1996} John W.\,Barrett and Bruce W.\,Westbury, \textit{Invariants of piecewise-linear 3-manifolds}, Transactions of the American Mathematical Society \textbf{348} (1996) pp.\,3997--4022.

\bibitem{Batanin2011a} Michael Batanin, \textit{Duoidal center and duoidal Deligne's conjecture} (Report on joint work with Martin Markl, Australian Category Seminar, 23 March 2011).

\bibitem{Batanin2011b} Michael Batanin, \textit{Tamarkin complex as many object Hochschild complex, Part~V} (Report on joint work with Martin Markl, Australian Category Seminar, 9~March 2011).

\bibitem{BatMarkl} Michael Batanin and Martin Markl, \textit{Centers and homotopy centers in enriched monoidal categories}; arXiv:1109.4084.

\bibitem{Ben1963} Jean B\'enabou, \textit{Cat\'egories avec multiplication}, Comptes Rendus de l'Acad\'emie des Sciences, Paris, \textbf{256} (1963) pp.\,1887--1890.

\bibitem{Ben1967} Jean B\'enabou, \textit{Introduction to bicategories}, Lecture Notes in Mathematics \textbf{47} (Springer-Verlag, 1967) pp.\,1--77.

\bibitem{BCSW} Renato Betti, Aurelio Carboni, Ross Street and Robert Walters, \textit{Variation through enrichment}, Journal of Pure and Applied Algebra \textbf{29} (1983) pp.\,109--127.

\bibitem{BookSt} Thomas Booker and Ross Street, \textit{Tannaka duality and convolution for duoidal categories}; arXiv.1111.5659. 

\bibitem{Borceux} Francis Borceux, \textit{Handbook of Categorical Algebra 1, 2 and 3}, Encyclopedia of Mathematics and its Applications \textbf{50}, \textbf{51} and \textbf{52} (Cambridge University Press, 1994).

\bibitem{Borel1967} Armand Borel, \textit{Topics in the homology theory of fibre bundles}, Lecture Notes in Mathematics \textbf{36} (Springer 1967).

\bibitem{BoucBook} Serge Bouc, \textit{Green functors and $G$-sets}, Lecture Notes in Mathematics \textbf{1671} (Springer-Verlag, 1997). 

\bibitem{BrugVirel2007} Alain Brugui\'eres and Alexis Virelizier, \textit{Hopf monads}, Advances in Mathematics \textbf{215} (2007) pp.\,679--733.

\bibitem{BrugVirel2008} Alain Brugui\'eres and Alexis Virelizier, \textit{Categorical centers and Reshetikhin--Turaev invariants}, Acta Mathematica Vietnam \textbf{33} (2008) pp.\,255--277.

\bibitem{BrugVirelLack} Alain Brugui\'eres, Stephen Lack and Alexis Virelizier, \textit{Hopf monads on monoidal categories}, Advances in Mathematics \textbf{227(2)} (2011) pp.\,745--800; also arXiv:1003.1920.

\bibitem{CKS} Aurelio Carboni, Stefano Kasangian and Ross Street, \textit{Bicategories of spans and relations}, Journal of Pure and Applied Algebra \textbf{33} (1984) pp.\,259--267.

\bibitem{ChLaSt} Dimitri Chikhladze, Stephen Lack and Ross Street, \textit{Hopf monoidal comonads}, Theory and Applications of Categories \textbf{24(19)} (2010) pp.\,554--563.

\bibitem{DayConv} Brian J.\,Day, \textit{On closed categories of functors}, Lecture Notes in Mathematics \textbf{137} (Springer 1970) pp.\,1--38.

\bibitem{DayMid4} Brian J.\,Day, \textit{Middle-four maps and net categories}; arXiv:0911.5200. 

\bibitem{Davydov2010} Alexei Davydov, \textit{Centre of an algebra}, Advances in Mathematics \textbf{225} (2010) pp.\,319--348.

\bibitem{Dress1973} Andreas W.\,M.\,Dress, \textit{Contributions to the theory of induced representations}, Lecture Notes in Mathematics \textbf{342} (Springer-Verlag, 1973) pp.\,183--240.

\bibitem{Dress1975} Andreas W.\,M.\,Dress, \textit{On signatures of Green functors}, Mathematische Zeit\-schrift \textbf{144} (1975) pp.\,1--8.

\bibitem{DMW1987} William G.\,Dwyer, Haynes R.\,Miller and Clarence W.\,Wilkerson, \textit{The homotopic uniqueness of $BS^3$}, Lecture Notes in Mathematics \textbf{1298} (Springer-Verlag, 1987) pp.\,90--105.

\bibitem{EilKel1966} Samuel Eilenberg and G.\,Max Kelly, \textit{Closed categories}, Proceedings of the Conference on Categorical Algebra (La Jolla, 1965), (Springer-Verlag,1966) pp.\,421--562.

\bibitem{FreydYetter1989} Peter Freyd and David Yetter, \textit{Braided compact closed categories with applications to low dimensional topology}, Advances in Mathematics \textbf{77} (1989) pp.\,156--182.

\bibitem{FFRS2006} Juerg Fr\"olich, Juergen Fuchs, Ingo Runkel and Christoph Schweigert,
\textit{Correspondences in ribbon categories}, Advances in Mathematics \textbf{199} (2006) pp.\,192--329.

\bibitem{Gel1970} S.\,I.\,Gelfand, \textit{Representations of the full linear group over a finite field}, Mat.\,Sb.\,(N.S.) \textbf{83(125)} (1970) pp.\,15--41.

\bibitem{GG1962} I.\,M.\,Gelfand and M.\,I.\,Graev, \textit{Construction of irreducible representations of simple algebraic groups over a finite field}, Dokl.\,Akad.\,Nauk SSSR \textbf{146} (1962) pp.\,529--532.

\bibitem{GPS} Robert Gordon, A.\,John Power and Ross Street, \textit {Coherence for tricategories}, Memoirs of the American Mathematical Society \textbf{117} no.\,558 (1995) vi+81 pp.

\bibitem{Green1955} James A.\,Green, \textit{The characters of the finite general linear groups}, Transactions of the American Mathematical Society \textbf{80} (1955) pp.\,402--447. 

\bibitem{Green1971} James A.\,Green, \textit{Axiomatic representation theory for finite groups}, Journal of Pure and Applied Algebra \textbf{1} (1971) pp.\,41--77.

\bibitem{Green1995} James A.\,Green, \textit{Hall algebras, hereditary algebras and quantum groups}, Inventiones Mathematicae \textbf{120} (1995) pp.\,361--377.

\bibitem{H-C1970} Harish-Chandra, \textit{Eisenstein series over finite fields, Functional analysis and related fields} (Proc.\,Conf.\,M.\,Stone, U.\,Chicago 1968) (Springer 1970) pp.\,76--88.

\bibitem{JackMcC1992} Stefan Jackowski and James McClure, \textit{Homotopy decomposition of classifying spaces via elementary abelian subgroups}, Topology \textbf{31} (1992) pp.\,113--132.

\bibitem{Species} Andr\'e Joyal, \textit{Une theory combinatoire des series formelles}, Advances in Mathematics \textbf{42} (1981) pp.\,1--82.

\bibitem{AnalFunct} Andr\'e Joyal, \textit{Foncteurs analytiques et especes de structures}, Lecture Notes in Mathematics \textbf{1234} (Springer 1986) pp.\,126--159. 

\bibitem{TYBO} Andr\'e Joyal and Ross Street, \textit{Tortile Yang--Baxter operators in tensor categories}, Journal of Pure and Applied Algebra \textbf{71} (1991) pp.\,43--51. 

\bibitem{GTC} Andr\'e Joyal and Ross Street, \textit{The geometry of tensor calculus, I}, Advances in Mathematics \textbf{88} (1991) pp.\,55--112.

\bibitem{BTC}  Andr\'e Joyal and Ross Street, \textit{Braided tensor categories}, Advances in Mathematics \textbf{102} (1993) pp.\,20--78.

\bibitem{GLFq} Andr\'e Joyal and Ross Street, \textit{The category of representations of the general linear groups over a finite field}, Journal of Algebra \textbf{176} (1995) pp.\,908--946.

\bibitem{Kapr1995} Mikhail M.\,Kapranov, \textit{Analogies between the Langlands correspondence and topological quantum field theory}, Progress in Mathematics \textbf{131} (Birkh\"auser Boston, Massachusetts, 1995) pp.\,119--151.

\bibitem{Kan1958} Daniel M.\,Kan, \textit{Adjoint functors}, Transactions of the American Mathematical Society \textbf{87} (1958) pp.\,294--329.

\bibitem{KSW2005} Yasuyuki Kawahigashi, Nobuya Sato, Michihisa Wakui,  \textit{$(2+1)$-dimensional topological quantum field theory from subfactors and Dehn surgery formula for 3-manifold invariants},  Advances in Mathematics \textbf{195} (2005) pp.\,165--204.

\bibitem{Kelly1969} G.\,Max Kelly, \textit{Adjunction for enriched categories}, Lecture Notes in Mathematics \textbf{106} (Springer-Verlag, 1969) pp.\,166--177.

\bibitem{Kelly1974} G.\,Max Kelly, \textit{Doctrinal adjunction}, Lecture Notes in Mathematics \textbf{420} (Springer-Verlag, 1974) pp.\,257--280.

\bibitem{KellyBook} G.\,Max Kelly, \textit{Basic concepts of enriched category theory}, London Mathematical Society Lecture Note Series \textbf{64} (Cambridge University Press, Cambridge, 1982).

\bibitem{KelSt1974} G.\,Max Kelly and Ross Street, \textit{Review of the elements of 2-categories}, Lecture Notes in Mathematics \textbf{420} (Springer-Verlag, 1974) pp.\,75--103.

\bibitem{KongRunkel} Liang Kong and Ingo Runkel, \textit{Morita classes of algebras in modular tensor categories}, Advances in Mathematics \textbf{219} (2008) pp.\,1548--1576.

\bibitem{Lack2000} Stephen Lack, \textit{A coherent approach to pseudomonads}, Advances in Mathematics \textbf{152} (2000) pp.\,179--202. 

\bibitem{FTMII} Stephen Lack and Ross Street, \textit{The formal theory of monads II}, Journal of Pure and Applied Algebra \textbf{175} (2002) pp.\,243--265. 

\bibitem{Law1969} F.\,William Lawvere, \textit{Ordinal sums and equational doctrines},  Lecture Notes in Mathematics \textbf{80} (Springer-Verlag, 1969) pp.\,141--155.

\bibitem{Lindner1976} Harald Lindner, \textit{A remark on Mackey functors}, Manuscripta Mathematica \textbf{18} (1976) pp.\,273--278.

\bibitem{Lyub1995} Volodymyr V.\,Lyubashenko, \textit{Invariants of 3-manifolds and projective representations of mapping class groups via quantum groups at roots of unity}, Communications in Mathematical Physics \textbf{172} (1995) pp.\,467--516.

\bibitem{Macdon1995} I.\,G.\,Macdonald, \textit{Symmetric Functions and Hall Polynomials} (Oxford U.\,Press second edition 1995; first edition 1979).

\bibitem{ML1963} Saunders Mac Lane, \textit{Natural associativity and commutativity}, Rice University Studies \textbf{49} (1963) pp.\,28--46.

\bibitem{CWM} Saunders Mac Lane, \textit{Categories for the Working Mathematician}, Graduate Texts in Mathematics \textbf{5} (Springer-Verlag, 1971). 

\bibitem{MacLPar} Saunders Mac Lane and Robert Par\'e, \textit{Coherence for bicategories and indexed categories}, Journal of Pure and Applied Algebra \textbf{37} (1985) pp.\,59--80.

\bibitem{Marmolejo} Francisco Marmolejo, \textit{Distributive laws for pseudomonads}, Theory and Applications of Categories \textbf{5} (1999) pp.\,91--147.

\bibitem{Moerdijk2002} Ieke Moerdijk, \textit{Monads on tensor categories}, Journal of Pure Applied Algebra \textbf{168} (2002) pp.\,189--208.

\bibitem{Muger2003a} Michael M\"uger, \textit{From subfactors to categories and topology. II. The quantum double of tensor categories and subfactors}, Journal of Pure and Applied Algebra \textbf{180} (2003) pp.\,159--219.

\bibitem{Muger2003b} Michael M\"uger, \textit{On the structure of modular categories}, Proceedings of the London Mathematical Society (3) \textbf{87} (2003) pp.\,291--308.

\bibitem{MFCCC} Elango Panchadcharam and Ross Street, \textit{Mackey functors on compact closed categories}, Journal of Homotopy and Related Structures \textbf{2(2)} (2007) pp.\,261--293.

\bibitem{PastroSt2008} Craig Pastro and Ross Street, \textit{Doubles for monoidal categories}, Theory and Applications of Categories \textbf{21} (2008) pp.\,61--75.

\bibitem{Penrose1971} Roger Penrose, \textit{Applications of negative dimensional tensors}, in "Combinatorial Mathematics and its Applications" (Academic Press 1971) pp.\,221--244. 

\bibitem{ReshTur1991} N.\,Yu.\,Reshetikhin and V.\,G.\,Turaev, \textit{Invariants of 3-manifolds via link polynomials and quantum groups}, Inventiones Mathematicae \textbf{103} (1991) pp.\,547--597.

\bibitem{Ringel1990} Claus Michael Ringel, \textit{Hall algebras and quantum groups}, Inventiones Mathematicae \textbf{101} (1990) pp.\,583--591.

\bibitem{Schanuel1990} Steve H.\,Schanuel, \textit{Negative sets have Euler characteristic and dimension}, Lecture Notes in Mathematics \textbf{1488} (Springer, 1990) pp.\,379--385.

\bibitem{Schiff} Olivier Schiffmann, \textit{Lectures on Hall algebras}, arXiv:math/0611617 (2009). 

\bibitem{Segal1988} Graeme Segal, \textit{The definition of conformal field theory}, NATO Advanced Science Institute Series C Math.\,Phys.\,Sci. \textbf{250} (Kluwer 1988) pp.\,165--171.

\bibitem{StolzTeichner} Stephan Stolz and Peter Teichner, \textit{Traces in monoidal categories}, Transactions American Mathematical Society (to appear; also arXiv:1010.4527).

\bibitem{FTM} Ross Street, \textit{The formal theory of monads}, Journal of Pure and Applied Algebra \textbf{2} (1972) pp.\,149--168. 

\bibitem{FiB} Ross Street, \textit{Fibrations in bicategories}, Cahiers de topologie et g\'eom\'etrie diff\'erentielle \textbf{21} (1980) pp.\,111--160.

\bibitem{HCSCSE} Ross Street, \textit{Higher categories, strings, cubes and simplex equations}, Applied Categorical Structures \textbf{3} (1995) pp.\,29--77.

\bibitem{CatStr} Ross Street, \textit{Categorical structures}, Handbook of Algebra \textbf{1} (edited by M.\,Hazewinkel, Elsevier Science, 1996) pp.\,529--577.

\bibitem{MCL} Ross Street, \textit{The monoidal centre as a limit}, Theory and Applications of Categories \textbf{13} (2004) pp.\,184--190.

\bibitem{RossQG} Ross Street, \textit{Quantum Groups: a path to current algebra}, Australian Mathematical Society Lecture Series \textbf{19} (Cambridge University Press; 18 January 2007; ISBN-978-0-521-69524-4). 

\bibitem{Tambara2006} Daisuke Tambara, \textit{Distributors on a tensor category}, Hokkaido Mathematics Journal \textbf{35} (2006) pp.\,379--425.

\bibitem{Tambara2008} Daisuke Tambara, \textit{The Drinfeld center of the category of Mackey functors}, Journal of Algebra \textbf{319} (2008) pp.\,4018--4101.

\bibitem{Turaev1988} Vladimir Turaev, \textit{The Yang--Baxter equation and invariants of links}, Inventiones Mathematicae \textbf{92} (1988) pp.\,527--553.

\bibitem{TurVirez} Vladimir Turaev and Alexis Virelizier, \textit{On two approaches to 3-dimensional TQFTs}, arXiv:1006.3501 (June 2010).

\bibitem{TuraevYa1992} V.\,G.\,Turaev and O.\,Ya.\,Viro, \textit{State sum invariants of 3-manifolds and quantum $6j$-symbols}, Topology \textbf{31} (1992) pp.\,865--902.

\bibitem{Xiao1997} Jie Xiao, \textit{Drinfeld double and Ringel--Green theory of Hall algebras}, Journal of Algebra \textbf{190} (1997) pp.\,100--144. 

\bibitem{Yetter1988} David Yetter, \textit{Markov algebras}, Contemporary Mathematics \textbf{78} (1988) pp.\,705\break--730.

\bibitem{Zoeb} Volker Z\"oberlein, \textit{Doctrines on 2-categories}, Mathematische Zeitschrift \textbf{148} (1976) pp.\,267--279.



\end{thebibliography}
\end{document}